\newcommand{\bb}[1]{\mathbb{#1}}
\newcommand{\es}[1]{\EuScript{#1}}
\renewcommand{\sf}[1]{\mathsf{#1}}
\DeclareMathOperator{\hocolim}{\mathrm{hocolim}}
\DeclareMathOperator{\holim}{\mathrm{holim}}
\newcommand{\fib}{\mathsf{fib}}
\newcommand{\s}{{\sf{Sp}}}
\DeclareMathOperator{\T}{\sf{Top}_\ast}
\newcommand{\poly}[1]{\mathsf{Poly}^{\leq #1}}
\newcommand{\homog}[1]{\mathsf{Homog}^{#1}}
\newcommand{\Fun}{\sf{Fun}}
\DeclareMathOperator{\nat}{\mathsf{nat}}
\newcommand{\cofrep}{\widehat{c}}
\newcommand{\Ev}{\mathrm{Ev}}
\DeclareMathOperator{\ind}{\mathsf{ind}}
\DeclareMathOperator{\res}{\mathsf{res}}
\newcommand{\op}{\mathrm{op}}
  \newcommand{\adjunction}[4]{
\xymatrix{
#1:#2 \ar@<1ex>[r] &
\ar@<1ex>[l] #3:#4
}}
\newtheorem{thm}{Theorem }[subsection]
\newtheorem{prop}[thm]{Proposition}
\newtheorem{lem}[thm]{Lemma}
\newtheorem{cor}[thm]{Corollary}
\newtheorem{conj}[thm]{Conjecture}
\newtheorem{hypothesis}[thm]{Hypothesis}
\newtheorem*{thm*}{Theorem}
\theoremstyle{definition}
\newtheorem{definition}[thm]{Definition}
\newtheorem{ex}[thm]{Example}
\newtheorem{exs}[thm]{Examples}
\newtheorem{rem}[thm]{Remark}
\begin{document}

%% Title $$$

\title[The localization of orthogonal calculus]{The localization of orthogonal calculus with respect to homology}

\author{Niall Taggart}

\address{Mathematical Institute, Utrecht University, Budapestlaan 6, 3584 CD Utrecht, The Netherlands}

\email{n.c.taggart@uu.nl}

\date{\today}

\subjclass[2020]{55P65, 55P42, 55P60, 55N20}

\keywords{Orthogonal calculus, Bousfield localization, homological localization, nullification, calculus of functors}

\begin{abstract}    
For a set of maps of based spaces $S$ we construct a version of Weiss' orthogonal calculus which depends only on the $S$-local homotopy type of the functor involved. We show that $S$-local homogeneous functors of degree $n$ are equivalent to levelwise $S$-local spectra with an action of the orthogonal group $O(n)$ via a zigzag of Quillen equivalences between appropriate model categories. Our theory specialises to homological localizations and nullifications at a based space. We give a variety of applications including a reformulation of the Telescope Conjecture in terms of our local orthogonal calculus and a calculus version of Postnikov sections. Our results also apply when considering the orthogonal calculus for functors which take values in spectra.
\end{abstract}

\maketitle

\setcounter{tocdepth}{1}
{\hypersetup{linkcolor=black} \tableofcontents}

%%%%%%%%%%%%%%%%%%%%%%%%%%%%%%%%%%%%%%%%
	\section{Introduction}
%%%%%%%%%%%%%%%%%%%%%%%%%%%%%%%%%%%%%%%%

%%%%%%%%%%%%%%%%%%%%%%%%%%%%%%%%%%%%%%%%
	\subsection{Motivation}
%%%%%%%%%%%%%%%%%%%%%%%%%%%%%%%%%%%%%%%%
Weiss' Orthogonal calculus~\cite{We95} studies functors from the category of real inner product spaces and isometries to the category of based spaces or spectra. The motivation for such a version of functor calculus comes from a desire to study geometric and differential topology through a homotopy theoretic lens. For example, Arone, Lambrechts and Voli{\'c}~\cite{ALV2} and Arone~\cite{Ar09} utilised Weiss' calculus to provide a comprehensive study of the (stable) homotopy type of spaces of embeddings $\mathsf{Emb}(M, N \times \bb{R}^k)$ where $M$ and $N$ are fixed smooth manifolds. More recently Krannich and Randal-Williams~\cite{KRW2021} have studied the Weiss tower of the classifying space $\sf{BTOP}(\bb{R}^k)$ of the group of homeomorphisms of $\bb{R}^k$, to understand the homotopy type of the space of diffeomorphisms of discs. In all of these cases, the authors are only able to ascertain geometric information up to rational homotopy via ad-hoc means. These vastly varying approaches highlight the need for a comprehensive account of the interactions between orthogonal calculus and localizations. 

The theory of localizations at homology theories are ubiquitous and have had wide applications; of particular note is \emph{chromatic homotopy theory} which among other things gives a spectrum level interpretation for the periodic families appearing in the stable homotopy groups of spheres. An extensive amount of effort has been geared toward understanding how localization at homology theories--particularly the chromatic localizations--interact with Goodwillie's calculus of functors~\cite{AroneMahowald,KuhnTate, KuhnTAQ, AroneKuhn}, see e.g.,~\cite{Kuh07} for a survey. Analogous questions remain in Weiss' orthogonal calculus, and we propose a noticeably different approach than those applied to Goodwillie calculus. 

%%%%%%%%%%%%%%%%%%%%%%%%%%%%%%%%%%%%%%%%
		\subsection*{Overview}
%%%%%%%%%%%%%%%%%%%%%%%%%%%%%%%%%%%%%%%%
Given a functor $F$ from the category of Euclidean spaces to the category of based spaces or spectra the calculus assigns a tower of functors
\[
\xymatrix{
&&& F \ar@/_1.0pc/[dll] \ar@/_1.0pc/[dl] \ar@/^1.0pc/[dr] \ar@/^1.0pc/[drr] & & \\
\cdots \ar[r] & T_nF \ar[r] & T_{n-1}F \ar[r] & \cdots \ar[r] & T_1F \ar[r] & T_0F
}
\]
called the \emph{Weiss tower} for $F$. The functor $T_nF$ is a categorification of the $n$-th Taylor polynomial from differential calculus. The $n$-th layer of the tower $D_nF$ is the homotopy fibre of the map $T_nF \to T_{n-1}F$, and is a categorification of homogeneous functions from differential calculus. Orthogonal calculus is synonymous with being the most computationally challenging flavour of functor calculus due to the interaction between the highly `geometric' nature of the objects of study and the highly homotopical constructions.

Let $\es{C}$ denote either the category of based spaces or spectra. Given a set $S$ of maps in $\es{C}$ we produce an \emph{$S$-local Weiss tower} of the following form:
\[
\xymatrix{
&&& F \ar@/_1.0pc/[dll] \ar@/_1.0pc/[dl] \ar@/^1.0pc/[dr] \ar@/^1.0pc/[drr] & & \\
\cdots \ar[r] & T_n^SF \ar[r] & T_{n-1}^SF \ar[r] & \cdots \ar[r] & T_1^SF \ar[r] & T_0^SF \\
& D_n^S F \ar[u] & D_{n-1}^S F \ar[u] & & D_1^S F \ar[u] & \\
}
\]

To understand the $S$-local Weiss tower, we utilise Bousfield's~\cite{BousfieldLocalSpectra,BousfieldLocalSpaces} interpretation of localizations in terms of model structures on $\es{C}$. We begin by constructing a model structure, denoted $\poly{n}(\es{J}_0, L_S\es{C})$, which captures the homotopy theory of functors which are $S$-locally polynomial of degree less than or equal $n$. Under some assumptions on the set of localizing objects the composite $T_nL_S$ is a fibrant replacement functor hence satisfying the necessary universal property.

We further construct a model structure, denoted $\homog{n}(\es{J}_0, L_S\es{C})$, which captures the homotopy theory of functors which are $S$-locally homogeneous of degree $n$. Through a zigzag of Quillen equivalences we characterise the $S$-local $n$-homogeneous functors in terms of appropriately $S$-local spectra with an action of $O(n)$.

\begin{thm*}[Corollary \ref{cor: zigzag}]
Let $S$ be a set of maps of based spaces and $n \geq 0$. There is a zigzag of Quillen equivalences
\[
\homog{n}(\es{J}_0, L_S\T) \simeq_Q {\s}(L_S\T)[O(n)],
\]
where ${\s}(L_S\T)[O(n)]$ is the category of levelwise $S$-local spectra with an action of $O(n)$.
\end{thm*}

\begin{thm*}[Corollary \ref{cor: zigzag spectra}]
Let $S$ be a set of maps of spectra and $n \geq 0$. There is a zigzag of Quillen equivalences
\[
\homog{n}(\es{J}_0, L_S\s) \simeq_Q L_S{\s}[O(n)],
\]
where $L_S{\s}[O(n)]$ is the category of $S$-local spectra with an action of $O(n)$.
\end{thm*}

In particular, an $S$-local $n$-homogeneous functor $F$ is determined by and determines an appropriately $S$-local spectrum with an $O(n)$-action, denoted $\partial_n^S F$. On the derived level, we obtain a computationally accessible classification theorem for $S$-local homogeneous of degree $n$ functors.

\begin{thm*}[Theorem \ref{thm: classification of E-local n-homog}]\label{thm: classification of spectral E-local n-homog}
Let $S$ be a set of maps of in $\es{C}$ and $n \geq 1$. 
\begin{enumerate}
\item A $\T$-valued $S$-local $n$-homogeneous functor $F$ is objectwise weakly equivalent to the functor 
\[
V \longmapsto \Omega^\infty[ (S^{\bb{R}^n \otimes V} \wedge \partial_n^S F)_{hO(n)}],
\]
and any functor of the above form is objectwise $S$-local and $n$-homogeneous. 
\item A $\s$-valued $S$-local $n$-homogeneous functor $F$ is objectwise weakly equivalent to the functor
\[
V \longmapsto (S^{\bb{R}^n \otimes V} \wedge \partial_n^S F)_{hO(n)},
\]
and any functor of the above form is objectwise $S$-local and $n$-homogeneous. 
\end{enumerate}
\end{thm*}

%%%%%%%%%%%%%%%%%%%%%%%%%%%%%%%%%%%%%%%%
		\subsection*{Applications}
%%%%%%%%%%%%%%%%%%%%%%%%%%%%%%%%%%%%%%%%
We envision that the applications of this local version of orthogonal calculus are vast. For example, extending the rational computations of \cite{ALV2, Ar09, KRW2021} to higher chromatic height or another perspective on the full understanding of the Weiss tower of $\sf{BO}(-)$ in $v_n$-periodic homotopy theory achieved by Arone \cite{Ar02} using computations of Arone and Mahowald \cite{AroneMahowald}. 

Very little of our results use the fact that the target category is based spaces or spectra. The largest hurdle to having a theory of localizations of orthogonal calculus with target any (simplicial cofibrantly generated) model category is the development of orthogonal calculus in this realm. We hope that our expos\'{e} of orthogonal calculus with target space a localization of spaces or spectra will motivate the construction of orthogonal calculus based on more general homotopy theories such as arbitrary model categories or $\infty$-categories.

In the last part of this paper, we give several initial applications, of which we survey here.

%%%%%%%%%%%%%%%%%%%%%%%%%%%%%%%%%%%%%%%%
\subsubsection*{Bousfield classes}
%%%%%%%%%%%%%%%%%%%%%%%%%%%%%%%%%%%%%%%%
In \cite{BousfieldLocalSpectra}, Bousfield introduced an equivalence relation on the stable homotopy category that has turned out to be of extreme importance. Define Bousfield class $\langle E \rangle$ of a spectrum $E$ to be the collection of $E$-acyclic spectra, and say that $E$ and $E'$ are Bousfield equivalent if and only if $\langle E \rangle = \langle E' \rangle$. These Bousfield classes assemble into a lattice, the understanding of which has been a major task in stable homotopy theory. For example, the Nilpotence Theorem of Devanitz, Hopkins, and Smith \cite{NilpotenceI, NilpotenceII} is equivalent to a classification of the Bousfield classes for \emph{finite spectra}. The Bousfield lattice has many interesting interactions with homological localizations of orthogonal calculus. 

\begin{thm*}[{Example \ref{ex: BF classes}}]
Let $E$ and $E'$ be spectra. The $E$-local orthogonal calculus is equivalent to the $E'$-local orthogonal calculus if and only if $E$ and $E'$ are Bousfield equivalent.
\end{thm*}

Fix a prime $p$. Ravenel's height $n$ Telescope Conjecture~\cite[Conjecture 10.5]{RavenelLocalizations} is the statement that the height $n$ Morava $K$-theory, $K(n)$, is Bousfield equivalent to $T(n)$, the telescope of any $v_n$-self map on a finite type $n$ complex. The Telescope Conjecture is trivial at height $n=0$, has been verified at height $n=1$ and at all primes by Bousfield~\cite{BousfieldLocalSpectra}, Mahowald~\cite{MahowaldboResolutions} and Miller~\cite{MillerASS}, but in general, is widely believed to be false.

\begin{thm*}[Corollary {\ref{cor: telescope implies agree towers}}]
The height $n$ Telescope Conjecture holds if and only if the $K(n)$-local orthogonal calculus and the $T(n)$-local orthogonal calculus are equivalent. 
\end{thm*}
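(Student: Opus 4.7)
The plan is to recognize that this theorem is essentially an immediate consequence of the previously stated Corollary \ref{cor: BF classes}, once one recalls the standard formulation of the Telescope Conjecture in terms of Bousfield classes.

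First, I would recall the statement of the height $n$ Telescope Conjecture in the form most convenient here: fixing a prime $p$ and working $p$-locally, the height $n$ Telescope Conjecture asserts precisely that the Bousfield classes $\langle T(n) \rangle$ and $\langle K(n) \rangle$ coincide. (One direction, $\langle K(n)\rangle \leq \langle T(n)\rangle$, is a theorem of Ravenel/Hopkins--Smith via the thick subcategory theorem; the content of the conjecture is the reverse inclusion. I would simply cite \cite{RavenelLocalizations} and reformulate the conjecture in this Bousfield-class language.)

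Second, I would apply Corollary \ref{cor: BF classes}, taking $E = K(n)$ and $E' = T(n)$. That corollary says $E$-local and $E'$-local orthogonal calculi are equivalent if and only if $\langle E \rangle = \langle E' \rangle$. Chaining the two equivalences then gives: the Telescope Conjecture at height $n$ holds $\iff \langle T(n) \rangle = \langle K(n) \rangle \iff$ $K(n)$-local orthogonal calculus is equivalent to $T(n)$-local orthogonal calculus.

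Since Corollary \ref{cor: BF classes} is already in hand, there is no real obstacle — the entire proof is a two-step logical rewriting. The only subtlety worth spelling out is making sure the $p$-local convention is compatible on both sides: the Bousfield classes $\langle T(n) \rangle$ and $\langle K(n) \rangle$ are intrinsically $p$-local objects, so the equivalence of the associated local orthogonal calculi is automatically formulated $p$-locally, matching the statement of the theorem. I would therefore present the proof as a short paragraph, citing the Telescope Conjecture's reformulation and then invoking Corollary \ref{cor: BF classes}.
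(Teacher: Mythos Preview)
Your proposal is correct and essentially identical to the paper's own treatment: the paper states that Theorem~\ref{thm: telescope implies agree towers} is an immediate corollary of Theorem~\ref{thm: BF classes iff towers}, which is precisely the parent result of the corollary you invoke. The only cosmetic difference is that the paper cites the general theorem directly rather than its specialization to spectra.
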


The Weiss tower of a functor $F$ produces a spectral sequence as it is a tower of fibrations. We call this spectral sequence the \emph{Weiss spectral sequence}. From a computational perspective we obtain the following relation between the Telescope Conjecture and the local Weiss spectral sequences.

\begin{thm*}[Lemma \ref{lem: telescope and WSS}]
 If the height $n$ Telescope Conjecture holds, then for all $r \geq 0$, the $r$-th page of the $T(n)$-local Weiss spectral sequence is isomorphic to the $r$-th page of the $K(n)$-local Weiss spectral sequence. 
\end{thm*}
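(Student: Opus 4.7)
The plan is to deduce this from the Bousfield class characterization of local orthogonal calculus given in Corollary \ref{cor: BF classes}. The height $n$ Telescope Conjecture at the prime $p$ asserts precisely that $K(n)$ and $T(n)$ have the same Bousfield class. Under this hypothesis, part (1) of Corollary \ref{cor: BF classes} yields an equivalence between the $K(n)$-local orthogonal calculus and the $T(n)$-local orthogonal calculus.

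Next I would translate this equivalence of local calculi into a levelwise statement about Weiss towers. The equivalence produced by Corollary \ref{cor: BF classes} is assembled from the zigzag of Quillen equivalences of Corollary \ref{cor: zigzag} together with the fact that the $S$-local polynomial approximation functors $T_n$ are the derived functors of fibrant replacement in the model structure $\poly{n}(\es{J}_0, L_S\T)$ constructed earlier in the paper. Since the two model structures $\poly{n}(\es{J}_0, L_{K(n)}\T)$ and $\poly{n}(\es{J}_0, L_{T(n)}\T)$ coincide when $K(n)$ and $T(n)$ share a Bousfield class, one obtains a compatible equivalence $T_n L_{K(n)} F \simeq T_n L_{T(n)} F$ for every input functor $F$, intertwining the structure maps of the two towers. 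Passing to homotopy fibres gives an equivalence of layers $D_n^{K(n)} F \simeq D_n^{T(n)} F$ as well, so the entire $K(n)$-local Weiss tower of $F$ is equivalent to its $T(n)$-local Weiss tower.

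The Weiss spectral sequence is the spectral sequence of the exact couple obtained by applying $\pi_\ast$ (evaluated at a fixed vector space) to the Weiss tower. This construction is functorial in the tower, and an equivalence of towers produces an isomorphism of the associated exact couples, hence an isomorphism on every page $r \geq 0$ of the resulting spectral sequence. Combining this with the equivalence of towers above yields the claimed isomorphism of the $r$-th pages of the $K(n)$-local and $T(n)$-local Weiss spectral sequences for every $r$.

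The only substantive point to verify is the middle step: one needs that the abstract equivalence of local calculi coming from the Bousfield class coincidence is genuinely realised by an equivalence of towers, not merely of homotopy categories. This should, however, be immediate from the construction of the $S$-local model structures, whose fibrant replacements are precisely the $S$-local polynomial approximations $T_n L_S$; equality of the two model structures as subcategories of $\Fun(\es{J}_0, \T)$ with the same weak equivalences then forces a natural equivalence of fibrant replacements, and hence of the towers themselves.
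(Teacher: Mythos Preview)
Your argument is essentially the same as the paper's: the Telescope Conjecture gives $\langle K(n)\rangle = \langle T(n)\rangle$, this forces the two local Weiss towers to be levelwise weakly equivalent, and functoriality of the spectral sequence of a tower finishes the job. The paper packages the middle step slightly differently: rather than arguing through equality of the model structures $\poly{n}(\es{J}_0,L_{K(n)}\T)=\poly{n}(\es{J}_0,L_{T(n)}\T)$, it simply uses the natural transformation $L_{K(n)}\to L_{T(n)}$ induced by the containment of Bousfield classes to write down an explicit map of towers, observes via Theorem~\ref{thm: telescope implies agree towers} that this map is a levelwise weak equivalence, and then checks directly that the induced map on $E_1$-pages commutes with $d_1$.

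One caution about your final paragraph: you assert that $T_nL_S$ is the fibrant replacement in $\poly{n}(\es{J}_0,L_S\T)$, but the paper only establishes this when the class of $S$-local spaces is closed under sequential homotopy colimits, a hypothesis not verified for $K(n)$-localization. This detour is unnecessary anyway: the equivalence of towers you need is already supplied by Theorem~\ref{thm: BF classes iff towers} (equivalently Theorem~\ref{thm: telescope implies agree towers}), which builds the comparison map directly from $L_{K(n)}F\to L_{T(n)}F$ without appealing to fibrant replacement. Your second paragraph already contains the correct argument; the last paragraph can be dropped.
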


%%%%%%%%%%%%%%%%%%%%%%%%%%%%%%%%%%%%%%%%
\subsubsection*{Nullifications}
%%%%%%%%%%%%%%%%%%%%%%%%%%%%%%%%%%%%%%%%
For functors from the category of Euclidean spaces to the category of based spaces we also consider localization at a based space $W$, which is sometimes referred to as \emph{nullification}. In this setting $W$-local objects are also called $W$-periodic, following Bousfield \cite{BousfieldPeriodicity} and Farjoun \cite{FarjounCellular}.

We give alternative constructions for the $n$-polynomial and $n$-homogenous model structures when the localization is a nullification. These alternative constructions yield an identical $n$-polynomial model structure but sheds new light on some of the formal properties of the model structure, and yield an $n$-homogeneous model structure which is Quillen equivalent to the original $W$-local model structure via the identity functor. These alternative descriptions are particularly useful when considering Postnikov sections of orthogonal calculus.

The results obtained for nullifications do not hold for more general localizations as the techniques employed rely crucially on  a right properness condition on the model categories. We show in Proposition \ref{prop: local model as a nullification} that the right proper condition is satisfied if and only if the localization is a nullification. This is an extension of a remark of Bousfield in \cite{Bo01}.

%%%%%%%%%%%%%%%%%%%%%%%%%%%%%%%%%%%%%%%%
\subsubsection*{Postnikov Sections}
%%%%%%%%%%%%%%%%%%%%%%%%%%%%%%%%%%%%%%%%

Considering nullifications with respect to the spheres produces a theory of Postnikov sections in orthogonal calculus. We prove that our $S^{k+1}$-local projective model structure on the category of functors from Euclidean spaces to based spaces is identical to the model structure of $k$-types in the category of functors from Euclidean spaces to based spaces in the sense of $k$-types in an arbitrary model category developed by Guti\'{e}rrez and Roitzheim~\cite[\S4]{GR17}.

\begin{thm*}[Proposition {\ref{prop: GR k-types in orthogonal functors}}]
Let $k \geq 0$. The model structure of $k$-types in orthogonal functors is identical to the $S^{k+1}$-local model structure, that is, there is an equality of model structures,
\[
P_k{\Fun}(\es{J}_0, \T) := L_{W_k}{\Fun}(\es{J}_0, \T) = {\Fun}(\es{J}_0, L_{S^{k+1}}\T). 
\]	
\end{thm*}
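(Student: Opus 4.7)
The strategy is to exhibit both sides of the claimed equality as left Bousfield localizations of the \emph{same} underlying model structure, namely the projective model structure on $\Fun(\es{J}_0, \T)$, and then check they have the same class of fibrant objects. Since the cofibrations agree (they are unchanged by any left Bousfield localization), and a left Bousfield localization of a given model structure is determined by its fibrant objects, equality of the two localized model structures will follow at once.

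The fibrant objects of the right-hand side are immediate from the construction: $F \in \Fun(\es{J}_0, L_{S^{k+1}}\T)$ is fibrant precisely when each $F(V)$ is fibrant in $L_{S^{k+1}}\T$, i.e., by Bousfield's characterization, when each $F(V)$ is a based $k$-type.

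For the left-hand side, recall from \cite[\S4]{GR17} that $P_k\Fun(\es{J}_0, \T) = L_{W_k}\Fun(\es{J}_0, \T)$ is the left Bousfield localization at a set $W_k$ whose members, in an enriched presheaf category such as ours, can be taken to be the smash products $\es{J}_0(V,-) \wedge j$ of representables with (a suitable cofibrant replacement of) the truncation map $j \colon S^{k+1} \to \ast$. The enriched Yoneda lemma then gives, for any projectively fibrant $F$ and any based space $A$,
\[
\Map_{\Fun(\es{J}_0, \T)}(\es{J}_0(V,-) \wedge A, F) \simeq \Map_{\T}(A, F(V)),
\]
so $F$ is $W_k$-local if and only if every $F(V)$ is $S^{k+1}$-local, i.e., a $k$-type. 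This is exactly the condition describing fibrancy on the right.

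The main obstacle I anticipate is the careful comparison with \cite[\S4]{GR17}: one must verify that the set $W_k$ used there is, up to (derived-)equivalence, precisely the set of representable smash products of the truncation map, so that the Yoneda computation above applies. Once this bookkeeping is in place, both localizations share the same cofibrations and the same fibrant objects, and therefore coincide as model structures on the nose.
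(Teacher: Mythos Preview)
Your proposal is correct and follows essentially the same route as the paper: both model structures are left Bousfield localizations of the projective model structure, so it suffices to match fibrant objects, and you reduce $W_k$-locality to a levelwise $S^{k+1}$-locality condition via the enriched Yoneda lemma. The only difference is in the ``bookkeeping'' you flag: rather than replacing $W_k$ by the set $\{\es{J}_0(V,-)\wedge(S^{k+1}\to\ast)\}$, the paper computes the pushout products $I\,\Box\,\{S^{k+1}\to D^{k+2}\}$ directly as $\{\es{J}_0(U,-)\wedge S^{n+k+1}_+ \to \es{J}_0(U,-)\wedge D^{n+k+2}_+\}$ and then runs the same adjunction argument to see that locality means $\pi_i Z(U)=0$ for $i\geq k+1$.
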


As an application we produce a tower of model categories
\[
\cdots \longrightarrow \homog{n}(\es{J}_0, P_k\T) \longrightarrow \cdots \longrightarrow \homog{n}(\es{J}_0, P_0\T),
\]
where $P_k\T$ denotes the $S^{k+1}$-local model structure on based spaces. By applying the theory of homotopy limits of model categories, we show that the $n$-homogeneous model structure of Barnes and Oman \cite[Proposition 6.9]{BO13} is the homotopy limit of this tower, in the following sense.

\begin{thm*}[{Corollary \ref{cor: homog holim}}]
There is a Quillen equivalence
\[ 
\homog{n}(\es{J}_0, \T) \simeq_{Q} \underset{k}{\holim}~ \homog{n}(\es{J}_0, P_k\T).
\]
\end{thm*}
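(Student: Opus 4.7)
The plan is to reduce the statement to a Postnikov convergence result for spectra via the zigzag of Quillen equivalences from Corollary \ref{cor: zigzag}. Applied in the case $S = \{S^{k+1}\}$, so that $L_S\T = P_k\T$, this gives a natural zigzag
\[
\homog{n}(\es{J}_0, P_k\T) \simeq_Q \s(P_k\T)[O(n)]
\]
for every $k \geq 0$; for $S = \emptyset$, it reduces to the classical Barnes--Oman classification $\homog{n}(\es{J}_0, \T) \simeq_Q \s(\T)[O(n)]$. The right-hand side of the theorem is therefore amenable to being rewritten in terms of spectra.

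First I would argue that the zigzag of Corollary \ref{cor: zigzag} is natural in the localization, in the sense that the underlying functors (differentiation, induction, and their adjoints) do not depend on the choice of $S$; only the model structures being compared are changed. This compatibility is exactly what is needed to lift the zigzag to a zigzag of Quillen equivalences of the homotopy limits of the towers of model categories, yielding
\[
\holim_k \homog{n}(\es{J}_0, P_k\T) \simeq_Q \holim_k \bigl(\s(P_k\T)[O(n)]\bigr).
\]
Next, since $[O(n)]$ is the category of functors from the one-object groupoid $\mathbf{B}O(n)$, and functor categories commute with homotopy limits of model categories, I obtain a further Quillen equivalence
\[
\holim_k \bigl(\s(P_k\T)[O(n)]\bigr) \simeq_Q \bigl(\holim_k \s(P_k\T)\bigr)[O(n)].
\]

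Finally, the problem is reduced to showing that the canonical map $\s(\T) \to \holim_k \s(P_k\T)$ is a Quillen equivalence. This is the model-categorical incarnation of Postnikov convergence for spectra: because $P_k\T$ is nullification at $S^{k+1}$, the levelwise $P_k$-local structure on spectra models a Postnikov truncation, and every spectrum is the homotopy limit of its Postnikov sections. Combining these three steps with the Barnes--Oman equivalence $\s(\T)[O(n)] \simeq_Q \homog{n}(\es{J}_0, \T)$ assembles the required equivalence.

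The main obstacle is the rigorous treatment of homotopy limits of model categories. One needs to verify both that a zigzag of Quillen equivalences, compatible with the structure maps of a tower, descends to a Quillen equivalence of the respective homotopy limits, and that the right-adjoints forming the tower are sufficiently well-behaved (e.g., right Quillen between cofibrantly generated model categories, with the tower Reedy fibrant in an appropriate sense) for the homotopy limit to be computed as expected. A secondary difficulty is the precise identification of $\s(P_k\T)$ as a model for $k$-truncated spectra, which is required for the appeal to classical Postnikov convergence; once this identification is in place, the convergence statement is standard.
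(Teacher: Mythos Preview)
Your proposal is correct and follows essentially the same route as the paper: reduce to Postnikov convergence for spectra (which the paper proves directly as Proposition~\ref{prop: limit of spectra} via a $\lim^1$ argument, and which you identify as the key input), and then transport this along the zigzag of Quillen equivalences from Corollary~\ref{cor: zigzag}. The identification of $\s(P_k\T)$ with $k$-types in spectra that you flag as a secondary difficulty is exactly Proposition~\ref{prop: GR k-types in spectra as stablisation}.

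The one organisational difference worth noting is that the paper does not assemble a zigzag at the end. Instead it propagates the \emph{specific} $(\mathrm{const},\lim)$ Quillen equivalence step by step along the zigzag: one first shows that each Quillen equivalence in the zigzag (intermediate category to spectra with $O(n)$-action, then $n$-homogeneous to intermediate) induces a Quillen equivalence on the categories of sections and hence on the homotopy limit model structures (Proposition~\ref{prop: induced adjunction on holim QE} and its analogue), and then applies $2$-out-of-$3$ in the evident commutative square of adjunctions with vertical maps $(\mathrm{const},\lim)$. This yields the stronger conclusion that the canonical $(\mathrm{const},\lim)$ adjunction itself is a Quillen equivalence, not merely that the two model categories are connected by some zigzag. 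Your step~2 (commuting $[O(n)]$ with $\holim$) is replaced in the paper by the more pedestrian observation that the proof for spectra goes through verbatim for spectra with $O(n)$-action (Corollary~\ref{cor: limit of spectra with O(n)-action}).
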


%%%%%%%%%%%%%%%%%%%%%%%%%%%%%%%%%%%%%%%%%%%%%%
	\subsection*{Relation to other work}
%%%%%%%%%%%%%%%%%%%%%%%%%%%%%%%%%%%%%%%%%%%%%%

This work is intimately related to the rational orthogonal calculus developed by Barnes~\cite{Ba17},  by replacing our generalised homology theory $E_\ast$ with rational homology one recovers Barnes' theory.

Unstable chromatic homotopy theory can be described algebraically, via Heuts' \cite{HeutsAlgModels} \emph{algebraic model} for $v_n$-periodic spaces via an equivalence (of $\infty$-categories) with Lie algebras in $T(n)$-local spectra. This model indicated that there is likely a relationship between $v_n$-periodic orthogonal calculus and orthogonal calculus of Heuts' Lie algebra models. Such an equivalence at chromatic height zero suggests a relationship between rational orthogonal calculus and the algebraic models for rational homotopy theory of Sullivan and Quillen \cite{QuillenRational, SullivanRational}. This together with Barnes' \cite{Ba17} model for rational $n$-homogeneous functors using the classification of rational spectra with an $O(n)$-action as torsion modules over the rational cohomology ring of ${\sf BSO}(n)$ of Greenlees and Shipley \cite{GS14} suggests the existence of \emph{algebraic model calculi}. We plan to return to this in future work. 

This work also forms part of an extensive program to go ``beyond orthogonal calculus" which was initiated in the Ph.D. thesis of the author \cite{Taggartthesis}, together with a series of articles exploring extensions of the orthogonal calculus and the relations between these, \cite{TaggartUnitary, TaggartOCandUC, TaggartReality, TaggartRealityUnitary}. This extensive project hopes to illuminate our understanding of orthogonal calculus which (at least relative to Goodwillie calculus) remains largely unexplored.

The future applications of the homological localization of orthogonal calculus are abounding. For example in the recent work of Beaudry, Bobkova, Pham, and Xu~\cite{BeaudryBobkovaPhamXu}, the authors compute the $tmf$-homology of $\bb{R}P^2$, where $tmf$ denotes the connective spectrum of topological modular forms. Their computation for $\bb{R}P^2$ and the $tmf$-local Weiss tower for the functor $V \mapsto \bb{R}P(V)$ should yield a calculation of the $tmf$-homology of $\bb{R}P^k$ for all $k$. Such a connection would, for example, feed into a chromatic understanding of block structures, see e.g., \cite{Ma07}.

%%%%%%%%%%%%%%%%%%%%%%%%%%%%%%%%%%%%%%%%%%%%%%
		\subsection*{Conventions}
%%%%%%%%%%%%%%%%%%%%%%%%%%%%%%%%%%%%%%%%%%%%%%

We work extensively with model categories and refer the reader to the survey article \cite{DS95} and the textbooks \cite{Ho99, Hi03} for a detailed account of the theory. We further assume the reader has familiarity with orthogonal calculus, references for which include \cite{BO13, We95}.

The category $\T$ will always denote the category of based compactly generated weak Hausdorff spaces, and we will, for brevity, call the objects of this category ``based spaces". The category of based spaces will always be equipped with the Quillen model structure unless specified otherwise. The weak equivalences are the weak homotopy equivalences and fibrations are Serre fibrations. This is a cellular, proper and topological model category with sets of generating cofibrations and acyclic cofibrations denoted by $I$ and $J$, respectively. 

Unless otherwise stated the word ``spectra'' is synonymous with the phrase ``orthogonal spectra'', details of which can be found in \cite{MMSS01} in the non-equivariant case, and \cite{MM02} in the equivariant situation. 

We will denote by $\es{C}$ either the category of based spaces or of orthogonal spectra.

%%%%%%%%%%%%%%%%%%%%%%%%%%%%%%%%%%%%%%%%%%%%%%
		\subsection*{Acknowledgements}
%%%%%%%%%%%%%%%%%%%%%%%%%%%%%%%%%%%%%%%%%%%%%%

This work has benefited from helpful conversations and comments from D.~Barnes, T.~Barthel, G.~Heuts, I.~Moerdijk and J.~Williamson. We are particularly grateful to S.~Balchin for reading an earlier version of this material. We extend our thanks to the meticulous referee who has greatly enhanced this article by taking (in their own words) ``a long time'' to check the numerous technical results. We also thank the Max Plank Institute for Mathematics for its hospitality during part of the writing process.

%%%%%%%%%%%%%%%%%%%%%%%%%%%%%%%%%%%%%%%%
\part{Local orthogonal calculus}
%%%%%%%%%%%%%%%%%%%%%%%%%%%%%%%%%%%%%%%%

%%%%%%%%%%%%%%%%%%%%%%%%%%%%%%%%%%%%%%%%
	\section{Orthogonal functors}
%%%%%%%%%%%%%%%%%%%%%%%%%%%%%%%%%%%%%%%%

Denote by $\es{C}$ the category $\T$ of based topological spaces or the category $\s$ of (orthogonal) spectra. Define $\es{J}$ to be the category with objects finite-dimensional inner product subspaces of $\bb{R}^\infty$ and with morphisms the linear isometries. Define $\es{J}_0$ to be the category with the same objects and $\es{J}_0(U,V) = \es{J}(U,V)_+$. The morphism set $\es{J}(U,V)$ may be topologised as the Stiefel manifold of $\dim(U)$-frames in $V$. As such, $\es{J}$ is a topologically enriched category, and $\es{J}_0$ is enriched in based spaces. Since the functor 
\[
\Sigma^\infty \colon \T \longrightarrow \s,
\]
is symmetric monoidal, see e.g., \cite[Lemma II.4.8]{MM02}, we may enhance the topological enrichment of $\es{J}_0$ to a spectral enrichment, resulting in a category $\es{J}_0^\s$, whose class of objects agrees with the class of objects in $\es{J}_0$, and morphism spectrum 
\[
\es{J}_0^\s(V,W) = \Sigma^\infty \es{J}_0(V,W).
\]
We will omit the superscript ``$\s$'' when confusion is unlikely to occur.

The category $\Fun(\es{J}_0, \es{C})$ of $\es{C}$-enriched functors from $\es{J}_0$ to $\es{C}$ is the category of input functors for orthogonal calculus. We will refer to such functors as \emph{$\es{C}$-valued orthogonal functors} or simply \emph{orthogonal functors} when confusion is unlikely. Examples of orthogonal functors are abound in geometry, topology and homotopy theory, and examples of $\T$-valued orthogonal functors include:
\begin{enumerate}
	\item the one-point compactification functor $\bb{S} \colon V \mapsto S^V$;
	\item the functor ${\sf{BO}}(-) \colon V \mapsto {\sf{BO}}(V)$ which sends an inner product space to the classifying space of its orthogonal group;
	\item the functor ${\sf{BTOP}}(-)\colon V \mapsto {\sf{BTOP}}(V)$, which sends an inner product space $V$ to ${\sf{BTOP}}(V)$, the classifying space of the space of self-homeomorphisms of $V$;
	\item the functor ${\sf{BDiff}^b}(M \times -) \colon V \mapsto {\sf{BDiff}^b}(M \times V)$, which for a fixed smooth and compact manifold $M$ sends an inner product space $V$ to the classifying space of the group of bounded diffeomorphisms from $M \times V$ to $M \times V$ which are the identity on $\partial M \times V$; and,
	\item the restriction of an endofunctor on based spaces to evaluation on spheres\footnote{Endofunctors of based spaces are particularly interesting from a homotopy theoretic point of view when you restrict to the values on spheres, see e.g., \cite{BehrensEHP, AroneMahowald,Ar02}. In particular for $F$ the identity functor, the Weiss tower of $F \circ \bb{S} = \bb{S}$ and the Goodwillie tower for $F$ agree up to weak equivalence \cite{BE16}, hence orthogonal calculus is intimately related to understanding the (stable) homotopy groups of spheres.}.
\end{enumerate}

The category of orthogonal functors may be equipped with a projective model structure. 

\begin{prop}
There is a model category structure on the category of orthogonal functors $\Fun(\es{J}_0, \es{C})$ with weak equivalences and fibrations defined objectwise. This model structure is cellular, proper and topological, and in the case of $\s$-valued orthogonal functors, this model structure is spectral and stable.
\end{prop}

%%%%%%%%%%%%%%%%%%%%%%%%%%%%%%%%%%%%%%%%%%%%%%
		\subsection{Local input functors}
%%%%%%%%%%%%%%%%%%%%%%%%%%%%%%%%%%%%%%%%%%%%%%

The `base' model structure for the $S$-local orthogonal calculus will be the $S$-local model structure on the category of orthogonal functors.
 
 \begin{prop}\label{prop: S-local proj model structure}
Let $S$ be a set of maps in $\es{C}$. There is model structure on the category of orthogonal functors such that a map is a weak equivalence or fibration if it is an objectwise $S$-local equivalence or a objectwise $S$-local fibration in $\es{C}$, respectively.  This model structure is cellular, left proper and topological, and in the case of $\s$-valued orthogonal functors this model structure is spectral. We call this model structure the $S$-local projective model structure and denote it by ${\Fun}(\es{J}_0, L_S\es{C})$. 
\end{prop}
\begin{proof}
This model structure is an instance of a projective model structure on a category of functors, see e.g., \cite[Theorem 11.6.1]{Hi03}.
\end{proof}

\begin{ex}
For $E_\ast$ a generalised homology theory, the model structure of Proposition \ref{prop: S-local proj model structure} has weak equivalences the objectwise $E_\ast$-isomorphisms, and fibrant objects objectwise $E_\ast$-local objects. This follows since the $E_\ast$-localization of spaces and spectra exist by work of Bousfield, see e.g., \cite{BousfieldLocalSpaces, BousfieldLocalSpectra}.
\end{ex}

%%%%%%%%%%%%%%%%%%%%%%%%%%%%%%%%%%%%%%%%
	\section{Polynomial functors}
%%%%%%%%%%%%%%%%%%%%%%%%%%%%%%%%%%%%%%%%

%%%%%%%%%%%%%%%%%%%%%%%%%%%%%%%%%%%%%%%%%%%%%%
		\subsection{Polynomial functors}
%%%%%%%%%%%%%%%%%%%%%%%%%%%%%%%%%%%%%%%%%%%%%%

Polynomial functors behave in many ways like polynomial functions from classical calculus, e.g., a functor which is polynomial of degree less than or equal $n$, is polynomial of degree less than or equal $n+1$. We give only the necessary details here and refer the reader to \cite{We95} or \cite{BO13} for more details on polynomial functors in orthogonal calculus.

\begin{definition}
An orthogonal functor $F$ is \emph{polynomial of degree less than or equal $n$} if $F$ is objectwise fibrant and for each $U \in \es{J}_0$, the canonical map 
\[
F(U) \longrightarrow \underset{0 \neq V \subseteq \bb{R}^{n+1}}{\holim} F(U \oplus V) =: \tau_n F(U),
\]
is a weak homotopy equivalence. Functors which are polynomial of degree less than or equal $n$ will sometimes be referred to as \emph{$n$--polynomial} functors.
\end{definition}

\begin{rem}
Given an orthogonal functor $F$ and an inner product space $U$ we can restrict the orthogonal functor $F(U \oplus -)$ to a functor 
\[
F(U \oplus -) \colon \es{P}(\bb{R}^{n+1}) \longrightarrow \T,
\]
where $\es{P}(\bb{R}^{n+1})$ is the poset of finite-dimensional inner product subspaces of $\bb{R}^{n+1}$. Such functors are deserving of the name \emph{$\bb{R}^{n+1}$-cubes} by analogy with cubical homotopy theory. The orthogonal functor $F$ being $n$-polynomial is equivalent to asking that for each $U$ this restricted functor is homotopy cartesian. Informally speaking, orthogonal calculus can be thought of a calculus built from \emph{$\bb{R}^n$-cubical homotopy theory} in a similar way to how Goodwillie calculus is built from cubical homotopy theory.
\end{rem}

There is a functorial assignment of a universal (up to homotopy) $n$-polynomial functor to any orthogonal functor $F$. It is the \emph{$n$-polynomial approximation} of $F$, and is defined as 
\[
T_n F(U) = \hocolim( F(U) \longrightarrow \tau_n F(U) \longrightarrow \cdots \longrightarrow \tau_n^kF(U) \longrightarrow \cdots).
\]

In \cite[Proposition 6.5 $\&$ Proposition 6.6]{BO13}, Barnes and Oman construct a localization of the projective model structure on the category of orthogonal functors which captures the homotopy theory of $n$-polynomial functors, in particular the $n$-polynomial approximation functor is a fibrant replacement. There are two equivalent ways to consider this model structure; as the Bousfield-Friedlander localization of $\Fun(\es{J}_0, \es{C})$ at the $n$-polynomial approximation endofunctor 
\[
T_n \colon \Fun(\es{J}_0, \es{C}) \longrightarrow \Fun(\es{J}_0, \es{C}),
\]
 or, as the left Bousfield localization at the set
 \[
\es{S}_n = \{ S\gamma_{n+1}(U,V)_+ \longrightarrow \es{J}_0(U,V) \ \mid \ U, V \in \es{J}_0\},
\] 
for $\T$-valued orthogonal functors, or the set $\Sigma^\infty\es{S}_n = \{\Sigma^\infty f \mid f \in \es{S}_n\}$ for $\s$-valued orthogonal functors, where $S\gamma_{n+1}(V,W)$ is the sphere bundle of the $(n+1)$-fold Whitney sum of the orthogonal complement bundle over the space of linear isometries $\es{J}(V,W)$. 

\begin{prop}[{\cite[Proposition 6.5]{BO13}}]
There is a model category structure on the category of orthogonal functors with weak equivalences the $T_n$-equivalences\footnote{A map $f \colon X \to Y$ is a $T_n$-equivalence if $T_n(f) \colon T_nX \to T_nY$ is a objectwise weak equivalence.} and fibrations those objectwise fibrations $f \colon X \to Y$ such the square 
\[
\xymatrix{
X \ar[r] \ar[d] & T_nX \ar[d] \\
Y \ar[r] & T_nY \\
}
\]
is a homotopy pullback in the projective model structure. This model structure is cellular, proper and topological, and in the case of $\s$-valued orthogonal functors this model structure is spectral. We call this the $n$-polynomial model structure and denote it by $\poly{n}(\es{J}_0, \es{C})$. 
\end{prop}

%%%%%%%%%%%%%%%%%%%%%%%%%%%%%%%%%%%%%%%%%%%%%%
		\subsection{Local polynomial functors}
%%%%%%%%%%%%%%%%%%%%%%%%%%%%%%%%%%%%%%%%%%%%%%

The definition of an $S$-locally $n$-polynomial functor is the analogous definition of an $n$-polynomial functor when the base model category is $L_S\es{C}$, i.e., a objectwise fibrant functor which satisfies a cartesian $\bb{R}^{n+1}$-cube condition. 

\begin{definition}
Let $S$ be a set of maps in $\es{C}$. An orthogonal functor is \emph{$S$-locally $n$-polynomial} if it is objectwise $S$-local and $n$-polynomial.
\end{definition}

The $S$-locally $n$-polynomial model structure is an iterated left Bousfield localization involving the set $\es{S}_n$ and the set 
\[
J_S = \{ \es{J}_0(U,-) \wedge j \mid U \in \es{J},, j \in J_{L_S\es{C}}\},
\]
as this iterative localization will necessarily have the $S$-locally $n$-polynomial functors as fibrant objects. This model structure was first constructed by Barnes \cite{Ba17} for the rationalization of $\T$-valued orthogonal functors.

\begin{prop}\label{prop: S-local n-poly model structure}
Let $S$ be a set of maps in $\es{C}$. There is model category structure on the category of orthogonal functors with cofibrations the projective cofibrations, and fibrant objects the $S$-locally $n$-polynomial functors. This model structure is cellular, left proper, topological, and in the case of $\s$-valued orthogonal functors this model structure is spectral. We call this model structure the $S$-local $n$-polynomial model structure and denote it by $\poly{n}(\es{J}_0, L_S\es{C})$. 
\end{prop}
\begin{proof}
The process of left Bousfield localizations may be iterated and it follows that the $J_S$-localization of the $n$-polynomial model structure and the $\es{S}_n$-localization of the $S$-local projective model structure are identical, and have as cofibrations the projective cofibrations.

For the fibrant objects, notice that the model structure is equivalently described as the left Bousfield localization of the projective model structure with respect to the set of maps $\es{S}_n \cup J_S$. By definition an object $X$ is $\es{S}_n \cup J_S$-local if and only if it is both $\es{S}_n$-local and $J_S$-local, and hence the fibrant objects are precise those $S$-locally $n$-polynomial functors.
\end{proof}

The $S$-local $n$-polynomial model structure behaves precisely like a left Bousfield localization of the $n$-polynomial model structure in the following sense. 

\begin{lem}\label{lem: QA for polynomials}
Let $S$ be a set of maps in $\es{C}$. The adjoint pair
\[
\adjunction{ \mathds{1}}{\poly{n}(\es{J}_0, \es{C})}{\poly{n}(\es{J}_0, L_S\es{C})}{ \mathds{1}},
\]
is a Quillen adjunction. 
\end{lem}
\begin{proof}
The left adjoint preserves cofibrations since the classes of cofibrations are identical. The right adjoint is right Quillen since it preserves fibrant objects as every $S$-locally $n$-polynomial functor is necessarily $n$-polynomial. 
\end{proof}

The composite $T_nL_S$ need not be a fibrant replacement functor in the $S$-local $n$-polynomial model structure since the class of $S$-local objects need not be closed under filtered homotopy colimits. Imposing a condition on the set $S$ which forces $T_nL_S$ to be $S$-local in turn forces $T_nL_S$ to be a functorial fibrant replacement.

\begin{prop}
Let $S$ be a set of maps in $\es{C}$. If the class of $S$-local objects is closed under sequential homotopy colimits, then the weak equivalences of the $S$-local $n$-polynomial model structure are those maps $f \colon X \to Y$ 	such that the induced map
\[
T_nL_S f \colon T_n L_SX \longrightarrow T_nL_SY,
\]
is a $S$-local equivalence. In particular, The composite $T_nL_S$ is a functorial fibrant replacement in the $S$-local $n$-polynomial model structure.
\end{prop}
\begin{proof}
We apply \cite[Lemma 5.5]{Ba17} which shows that a map $f \colon X \to Y$ is weak equivalence in the iterated left Bousfield localization if and only if 
\[
L_S f \colon L_S X \longrightarrow L_S Y
\] 
is a $\es{S}_n$-local equivalence. This last is equivalent to $L_S f \colon L_S X \to L_S Y$ being a $T_n$-equivalence, i.e., $T_nL_S f \colon T_nL_S X \to T_nL_S Y$ being a objectwise weak equivalence. Since both domain and codomain of this map are $S$-local checking this map is a objectwise weak equivalence is equivalent to checking that it is an $S$-local equivalence by the $S$-local Whitehead's Theorem.
\end{proof}

\begin{rem}
Let $S$ be a set of maps in $\es{C}$. To ease notation, we will denote the composite $T_nL_S$ by $T_n^S$. In particular, for $E$ a spectrum we denote the composite functor $T_nL_E$ by $T_n^E$. In general, $T_n^S$ need not be $S$-local, but will be when the class of $S$-local objects is closed under sequential homotopy colimits.
\end{rem}

\begin{exs}\hspace{10cm}
\begin{enumerate}
	\item For a finite cell complex $W$, $T_n^WF$ is $W$-local (or $W$-periodic) for all $\T$-valued orthogonal functors $F$.
	\item For localization at the Eilenberg-Maclane spectrum associated to a subring $R$ of the rationals, $T_n^{HR}F$ is $HR$-local for all orthogonal functors $F$. 
	\item For $E$ a spectrum such that the associated localization of spectra is smashing, $T_n^EF$ is $E$-local for all $\s$-valued orthogonal functors $F$. 
\end{enumerate}
\end{exs}

%%%%%%%%%%%%%%%%%%%%%%%%%%%%%%%%%%%%%%%%
	\section{Differentiation}
%%%%%%%%%%%%%%%%%%%%%%%%%%%%%%%%%%%%%%%%

The analogy between orthogonal calculus and differential calculus (Taylor's version) indicated the existence of an inductive `formula' for the $n$-polynomial approximation. The building blocks of such a `formula' are the derivatives of the functor under consideration. 

%%%%%%%%%%%%%%%%%%%%%%%%%%%%%%%%%%%%%%%%%%%%%%
		\subsection{The derivatives}\label{ssection: derivatives}
%%%%%%%%%%%%%%%%%%%%%%%%%%%%%%%%%%%%%%%%%%%%%%

The orthogonal complement of the pullback of the tautological bundle to the Stiefel manifold $\es{J}_0(V,W)$ is a vector bundle $\gamma_1(V,W)$ with fibre over an isometry $f$ given by $f(V)^\perp$. For $n \geq 0$, we denote the $n$-fold Whitney sum of $\gamma_1(V,W)$ by $\gamma_n(V,W)$. Define $\es{J}_n$ to be the category with the same objects as $\es{J}$ and morphism space $\es{J}_n(U,V)$ given as the Thom space of $\gamma_n(U,V)$. Define $\es{J}_n^\s$ to be the spectral enriched version of $\es{J}_n$, i.e., the category with the same objects but morphism spectrum given by $\es{J}^\s(V,W) = \Sigma^\infty \es{J}_n(V,W)$. The standard action of $O(n)$ on $\bb{R}^n$ via the regular representation induces an action on the vector bundles that is compatible with the composition, hence $\es{J}_n$ is naturally enriched over based spaces with an $O(n)$-action.

Recall that $\es{C}$ denotes the category of based spaces or spectra. We denote by $\es{C}[O(n)]$ the category of $O(n)$-objects in $\es{C}$. For $\es{C}=\T$, this recovers the category of $O(n)$-spaces, and for $\es{C}=\s$, this is the category of spectra with an $O(n)$-action. Let $0 \leq m \leq n$. The inclusion $i_m^n \colon \bb{R}^m \to \bb{R}^n$ induces a functor $i_m^n \colon \es{J}_m \to \es{J}_n$. Postcomposition with $i_m^n$ induced a topological functor 
\[
\res_m^n \colon \Fun(\es{J}_n, \es{C}) \longrightarrow \Fun(\es{J}_m, \es{C}),
\]
which by \cite[Proposition 2.1]{We95} has a right adjoint 
\[
\ind_m^n \colon \Fun(\es{J}_m, \es{C}) \longrightarrow \Fun(\es{J}_n, \es{C}),
\]
the right Kan extension along $i_m^n$, and is given by
\[
\ind_m^n F(U) = \nat_m(\es{J}_n(U,-), F),
\]
where $\nat_m(-,-)$ denotes the space of natural transformations in $\Fun(\es{J}_m, \es{C})$ and $\es{J}_n(U,-)$ is considered as an object of $\Fun(\es{J}_m, \es{C})$ by restriction. Combining the restriction and induction functors with change of group adjunctions from \cite{MM02}, we obtain an adjoint pair
\[
\adjunction{\res_m^n/ O(n-m)}{ {\Fun}_{O(n)}(\es{J}_n, \es{C}[O(n)])}{ {\Fun}_{O(m)}(\es{J}_m, \es{C}[O(m)])}{{\ind}_m^n \sf{CI}},
\]
see \cite[\S 4]{BO13}, where ${\Fun}_{O(n)}(\es{J}_n, \es{C}[O(n)])$ is the category of $\es{C}[O(n)]$-enriched functors from $\es{J}_n$ to $\es{C}[O(n)]$. We refer to this category as the \emph{$n$-th intermediate category} on the point of its role as an intermediate in the classification of $n$-homogeneous functors, see Section \ref{section: classification of $n$-homogeneous functors}.

\begin{definition}\label{def: derivative}
Let $F$ be an orthogonal functor. For $n \geq 0$, the \emph{$n$-th derivative of $F$} is given by $\ind_0^n {\sf{CI}} F$. In which case, we write $\ind_0^n \varepsilon^*F$ or $F^{(n)}$. 
\end{definition}

Restricted evaluation in the $n$-th intermediate category induces structure maps of the form
\[
X(V) \wedge S^{\bb{R}^n \otimes W} \longrightarrow X(V \oplus W), 
\]
for $X \in {\Fun}_{O(n)}(\es{J}_n, \es{C}[O(n)])$ and $V,W \in \es{J}_n$, see e.g., \cite[\S7]{BO13}. It is thus reasonable to think of the objects of the $n$-th intermediate category as \emph{spectra of multiplicity $n$}, see e.g., \cite[\S9]{We95}. This idea leads to an object $Z$ in the $n$-th intermediate category being called an \emph{$n\Omega$-spectrum} if the adjoint structure maps
\[
Z(V) \longrightarrow \Omega^{\bb{R}^n \otimes W} Z(V \oplus W),
\]
are weak equivalences in $\es{C}$, and a map $f : X \to Y$ in the $n$-th intermediate category being called an $n$-stable equivalence if the induced map 
\[
f^* \colon [Y, Z] \longrightarrow [X,Z]
\]
on objectwise homotopy classes of maps is an isomorphism for all $n\Omega$-spectra $Z$. With these definitions we get an $n$-stable model structure on the $n$-th intermediate category analogous to the stable model structure on spectra, see e.g., \cite[\S7]{BO13}.

\begin{prop}[{\cite[Proposition 7.14]{BO13}}]
There is a model category structure on the $n$-th intermediate category with weak equivalences the $n$-stable equivalences and fibrations the objectwise fibrations $X \to Y$ such that the square 
\[
\xymatrix{
X(U) \ar[r] \ar[d] & \Omega^{\bb{R}^n \otimes V}X(U \oplus V) \ar[d] \\
Y(U) \ar[r] & \Omega^{\bb{R}^n \otimes V}Y(U \oplus V)
}
\]
is a homotopy pullback in $\es{C}$ for all $U,V \in \es{J}_n$. The fibrant objects are the $n\Omega$-spectra. This model structure is cellular, proper, stable and topological, and in the case of $\s$-valued orthogonal functors, this model structure is spectral. We call this the $n$-stable model structure and denote it by ${\Fun}_{O(n)}(\es{J}_n, \es{C}[O(n)])$.
\end{prop}

%%%%%%%%%%%%%%%%%%%%%%%%%%%%%%%%%%%%%%%%%%%%%%
		\subsection{The local $n$-stable model structure}
%%%%%%%%%%%%%%%%%%%%%%%%%%%%%%%%%%%%%%%%%%%%%%

We now equip the $n$-th intermediate category with an $S$-local model structure which will be intermediate in our classification of $S$-local $n$-homogeneous functors as appropriately\footnote{Here ``appropriately'' means levelwise $S$-local spectra for $\T$-valued orthogonal functors and $S$-local spectra for $\s$-valued orthogonal functors.} $S$-local spectra with an action of $O(n)$. This model structure was first defined by Barnes \cite{Ba17} for the rationalization of $\T$-valued orthogonal functors. 

\begin{prop}\label{prop: E-local intermediate}
Let $S$ be a set of maps in $\es{C}$. There is a model category structure on the $n$-th intermediate category with cofibrations the cofibrations of the $n$-stable model structure and fibrant objects the $n\Omega$-spectra which are objectwise $S$-local. This model structure is cellular, left proper and topological, and in the case of $\s$-valued orthogonal functors, this model structure is spectral. We call this the $S$-local $n$-stable model structure and denote it by $L_S{\Fun}_{O(n)}(\es{J}_n, \es{C}[O(n)])$.
\end{prop}
\begin{proof}
This model structure is the left  Bousfield localization of the $n$-stable model structure at the set 
\[
\es{Q}_n = \{ O(n)_+ \wedge \es{J}_n(U, -) \wedge j \mid U \in \es{J}, j \in J_{L_S\es{C}}\}. \qedhere
\]
\end{proof}

We record the following fact which will prove useful later. 

\begin{lem}\label{lem: derivative E-local}
Let $S$ be a set of maps in $\es{C}$. If $F$ is an $S$-local functor, then $F^{(n)}=\ind_0^nF$ is $S$-local.
\end{lem}
\begin{proof}
The objectwise smash product
\[
(-) \wedge (-) \colon {\Fun}(\es{J}_n, L_S\es{C}) \times L_S\es{C} \longrightarrow {\Fun}(\es{J}_n, L_S\es{C}),
\]
is a Quillen bifunctor, and the result follows from the definition of $\ind_0^nF$.  
\end{proof}

%It suffices to show that the quillen bifunctor for $\es{C}$ in the second variable since we only need worry about cofibrations of $L_S\es{C}$. It then follows since $Fun(\es{J}_n, L_S\es{C})$ is a $\es{C}$-enriched model category.$

%%%%%%%%%%%%%%%%%%%%%%%%%%%%%%%%%%%%%%%%%%%%%%
		\subsection{The derivatives as spectra}
%%%%%%%%%%%%%%%%%%%%%%%%%%%%%%%%%%%%%%%%%%%%%%

The $n$-th derivative ($n \geq 0$) is naturally an object of the $n$-th intermediate category, i.e., is a spectrum of multiplicity $n$. This multiplicity may be reduced to $n=1$ through a Quillen equivalence
\[
\adjunction{(\alpha_n)_!}{{\Fun}_{O(n)}(\es{J}_n, O(n)\T)}{{\s}[O(n)]}{(\alpha_n)^*},
\]
in the topological case see e.g., \cite[\S8]{BO13}, and by a series of Quillen equivalences
\[
\xymatrix@C+1ex{
\Fun(\es{J}_n, \s[O(n)]) \ar@<1ex>[r]^{(\alpha_n)_!} & \s(\s[O(n)]) \ar@<-1ex>[r]_{\rm{Ev}_0} \ar@<1ex>[l]^{(\alpha_n)^\ast} &  \s[O(n)]  \ar@<-1ex>[l]_{F_0}
}
\]
in the spectral case, see e.g., \cite[\S11]{BO13}. Here $\s(\s[O(n)])$ denotes the category of spectrum objects in spectra with an $O(n)$-action or equivalently, orthogonal bispectra with an $O(n)$-action, and is Quillen equivalent to orthogonal spectra by arguments similar to \cite[Theorem 5.1]{HoveySpectra} or \cite[Theorem 3.8.2]{SS03}. 

\begin{ex}
The (spectrum representing the) $n$-th derivative of the $\T$-valued orthogonal functor $\sf{BO}(-)$ have been completely calculated by Arone \cite{Ar02}. Weiss \cite{We95} calculated the first few examples by hand, for instance the first derivative is the sphere spectrum with trivial $O(1)$-action, the second derivative is the shifted sphere spectrum $\bb{S}^{-1}$ with trivial action, and the third derivative is the $2$-fold loops on the mod-$3$ Moore spectrum $\Omega^2 (\bb{S}/3)$. Higher derivatives have a striking resemblance with the  Goodwillie derivatives of the identity functors on based spaces.
\end{ex}

We now prove that this result holds $S$-locally for any set $S$ of maps in our category $\es{C}$. Since the adjunctions are slightly different, we prove each separately. 

\begin{thm}\label{thm: E-local spectra and intermediate}
Let $S$ be a set of maps of based spaces. The adjoint pair
\[
\adjunction{(\alpha_n)_!}{L_S{\Fun}_{O(n)}(\es{J}_n, O(n)\T)}{{\s}(L_S\T)[O(n)]}{(\alpha_n)^*},
\]
is a Quillen equivalence between the $S$-local model structures. 
\end{thm}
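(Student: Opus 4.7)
The plan is to bootstrap from the non-local Quillen equivalence $(\alpha_n)_! \dashv (\alpha_n)^*$ of \cite[\S 8]{BO13}, upgrading it to the $S$-local setting. I will break the proof into verifying the Quillen adjunction first and then the Quillen equivalence condition.

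First I would verify the adjoint pair is a Quillen adjunction between the $S$-local model structures. Both local model structures are left Bousfield localizations of their non-local counterparts and thus share the same cofibrations, so $(\alpha_n)_!$ automatically preserves cofibrations. By \cite[Proposition 3.3.18]{Hi03} it suffices to show $(\alpha_n)^*$ preserves fibrant objects between the local model structures. A fibrant object in $\mathsf{Sp}(L_S\T)[O(n)]$ is a levelwise $S$-local $\Omega$-spectrum with $O(n)$-action; since $(\alpha_n)^*$ is essentially a reindexing functor (sending $X$ to $V \mapsto X(\bb{R}^n \otimes V)$ up to the precise formula), its image is again levelwise $S$-local. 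The non-local Quillen equivalence guarantees the image is an $n\Omega$-spectrum, so it is fibrant in $L_S{\Fun}_{O(n)}(\es{J}_n, O(n)\T)$, as required.

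Second, to obtain the Quillen equivalence, I would invoke the standard preservation principle (see, e.g., \cite[Theorem 3.3.20]{Hi03}, or the argument Barnes uses in the rational case): a Quillen equivalence $L \dashv R$ descends to a Quillen equivalence of left Bousfield localizations $L_T\mathcal{C} \rightleftarrows L_{\mathbf{L}L(T)} \mathcal{D}$ at corresponding sets. The set $\es{Q}_n$ used to localize on the intermediate side and the analogous set used to define $\mathsf{Sp}(L_S\T)[O(n)]$ are both constructed by smashing the generating acyclic cofibrations $J_{L_S\T}$ with free objects of the respective enriched functor categories. Since $(\alpha_n)_!$ identifies these free objects (up to non-local weak equivalence), the two localizing sets correspond under the adjunction, and the preservation theorem applies.

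The main obstacle will be pinning down this last correspondence of localizing sets explicitly through $(\alpha_n)_!$. If a direct application of the preservation theorem becomes awkward, I would instead verify the two standard Quillen equivalence conditions by hand: that $(\alpha_n)^*$ reflects $S$-local weak equivalences between fibrant objects, and that the derived unit $X \to (\alpha_n)^*(\alpha_n)_! X$ is an $S$-local equivalence on cofibrant $X$. Both reduce to the non-local Quillen equivalence of \cite[\S 8]{BO13} combined with the $S$-local Whitehead theorem \cite[Theorem 3.2.13]{Hi03}, since between levelwise $S$-local $n\Omega$-spectra (respectively levelwise $S$-local $\Omega$-spectra with $O(n)$-action) the $S$-local weak equivalences coincide with the levelwise (and hence non-local) weak equivalences.
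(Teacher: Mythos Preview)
Your proposal is correct and follows essentially the same route as the paper. For the Quillen adjunction the paper uses \cite[Theorem 3.3.20(1)]{Hi03} together with the explicit isomorphism
\[
(\alpha_n)_!\bigl(O(n)_+ \wedge \es{J}_n(U,-)\wedge j\bigr)\cong O(n)_+ \wedge \es{J}_1(\bb{R}^n\otimes U,-)\wedge j,
\]
which is precisely the ``correspondence of localizing sets'' you anticipate needing; your alternative via \cite[Proposition 3.3.18]{Hi03} and preservation of fibrant objects by $(\alpha_n)^\ast$ is an equally valid packaging of the same fact. For the Quillen equivalence the paper invokes Hovey's criterion \cite[Proposition 2.3]{HoveySpectra}, reducing the problem to checking that if $Y$ is an $\Omega$-spectrum with $O(n)$-action whose image $(\alpha_n)^\ast Y$ is levelwise $S$-local, then $Y$ itself is levelwise $S$-local. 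This is a slightly slicker shortcut than your proposed direct verification of the derived unit and reflection of equivalences, but the underlying content is the same and your backup argument via the $S$-local Whitehead theorem would also go through.
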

\begin{proof}
For the Quillen adjunction apply \cite[Theorem 3.3.20(1)]{Hi03}, noting that there is an isomorphism 
\[
(\alpha_n)_!(O(n)_+ \wedge \es{J}_n(U, -) \wedge j) \cong O(n)_+ \wedge \es{J}_1(\bb{R}^n \otimes U, -) \wedge j,
\]
for $j$ a generating acyclic cofibration for the $S$-local model structure on based spaces. 

By \cite[Proposition 8.3]{BO13} the adjoint pair
\[
\adjunction{(\alpha_n)_!}{{\Fun}_{O(n)}(\es{J}_n, O(n)\T)}{{\s}[O(n)]}{(\alpha_n)^*},
\]
is a Quillen equivalence. To show that the adjunction between the $S$-local model structures is a Quillen equivalence, it suffices by \cite[Proposition 2.3]{HoveySpectra} to show that if $Y$ is fibrant in ${\s}[O(n)]$ such that $(\alpha_n)^*Y$ is fibrant in the $S$-local $n$-stable model structure, then $Y$ is fibrant in the $S$-local model structure on ${\s}[O(n)]$. This follows readily from the definitions of fibrant objects in both model structures.  
\end{proof}

The category of orthogonal bispectra with an $O(n)$-action, or equivalently the category of (orthogonal) spectrum objects in spectra with an $O(n)$-action may be equipped with an $L_S$-local model structure, similar to Lemma \ref{prop: E-local intermediate}. For $S$ a set of maps of spectra, the $S$-local model structure $L_{S}\s(\s[O(n)])$ is the left Bousfield localization of the stable model structure at the set
\[
\{ \es{J}_1(V,-) \wedge j \mid V \in \es{J}, j \in J_{L_S\s[O(n)]}\}, 
\]
since the category $\s(\s[O(n)])$ may also be described as the category of $O(n)$-objects in $\Fun(\es{J}_1, \s)$. In particular, the fibrant objects of the $S$-local model structure on $\s(\s[O(n)])$ are $O(n)$-objects $X \in \Fun(\es{J}_1, \s)$ such that $X(V)$ is an $S$-local spectrum for each $V \in \es{J}_1$. 

\begin{thm}\label{thm: E-local spectra and spectral intermediate}
Let $S$ be a set of maps of spectra. The adjoint pairs
\[
\xymatrix@C+1ex{
L_S\Fun(\es{J}_n, \s[O(n)]) \ar@<1ex>[r]^{(\alpha_n)_!} & L_S\s(\s[O(n)]) \ar@<-1ex>[r]_{\rm{Ev}_0} \ar@<1ex>[l]^{(\alpha_n)^\ast} &  L_S\s[O(n)]  \ar@<-1ex>[l]_{F_0}
}
\]
are Quillen equivalences between the $S$-local model structures. 
\end{thm}
\begin{proof}
Identifying the category of spectrum objects in spectra with an $O(n)$-action with the category of $O(n)$-objects in $\Fun(\es{J}_1, \s)$, the proof that the adjunction
\[
\adjunction{(\alpha_n)_!}{L_S\Fun(\es{J}_n, \s[O(n)])}{L_S\s(\s[O(n)])}{(\alpha_n)^*},
\]	
is a Quillen equivalence follows analogously to Theorem \ref{thm: E-local spectra and intermediate}.

For the adjunction 
\[
\adjunction{F_0}{L_S\s[O(n)]}{L_S\s(\s[O(n)])}{\Ev_0},
\]
note that the composite functor 
\[
\s[O(n)] \xrightarrow{\ F_0 \ } \s(\s[O(n)]) \xrightarrow{\ \mathds{1} \ } L_S\s(\s[O(n)]),
\]
is left Quillen, and to extend to a left Quillen functor from $L_S\s[O(n)]$, it suffices by \cite[Proposition 3.3.18(1) $\&$ Theorem 3.1.6(1)]{Hi03}, to exhibit that the right adjoint preserves $S$-local objects, which follows immediately from the definition of $S$-local objects in the respective model structures. 

To see that the adjunction is a Quillen equivalence, we apply \cite[Proposition 2.3]{HoveySpectra}, which reduces the problem to showing that if $Y$ is an $\Omega$-spectrum object in $\s[O(n)]$ (i.e., fibrant in $\s(\s[O(n)])$) such that $\Ev_0(Y)$ is $S$-local, then $Y$ is $S$-local. This follows from the $\Omega$-spectrum structure and the interaction of homotopy function complexes with the suspension-loops adjunction.
\end{proof}

%%%%%%%%%%%%%%%%%%%%%%%%%%%%%%%%%%%%%%%%
	\section{Homogeneous functors and their classification}\label{section: classification of $n$-homogeneous functors}
%%%%%%%%%%%%%%%%%%%%%%%%%%%%%%%%%%%%%%%%

%%%%%%%%%%%%%%%%%%%%%%%%%%%%%%%%%%%%%%%%%%%%%%
		\subsection{Homogeneous functors}
%%%%%%%%%%%%%%%%%%%%%%%%%%%%%%%%%%%%%%%%%%%%%%

The layers of the Weiss tower associated to an orthogonal functor $F$ are the homotopy fibres of maps $T_{n}F \to T_{n-1}F$ and have two interesting properties: firstly, they are polynomial of degree less than or equal to $n$ and secondly, their $(n-1)$-polynomial approximation is trivial. We denote the $n$-th layer of the Weiss tower of $F$ by $D_nF$.

\begin{definition}
For $n \geq 0$, an orthogonal functor $F$ is said to be \emph{$n$-reduced} if its $(n-1)$-polynomial approximation is objectwise weakly equivalent to the terminal object. An orthogonal functor $F$ is said to be \emph{homogeneous of degree $n$} if it is both polynomial of degree less than or equal $n$ and $n$-reduced. We will sometimes refer to a functor which is homogeneous of degree $n$ as being \emph{$n$-homogeneous}.
\end{definition}

There is a model structure on the category of orthogonal functors which contains the $n$-homogeneous functors as the bifibrant objects. This model structure is a right Bousfield localization of the $n$-polynomial model structure.

\begin{prop}[{\cite[Proposition 6.9]{BO13}}]
There is a  model category structure on the category of orthogonal functors with weak equivalences the $D_n$-equivalences and fibrations the fibrations of the $n$-polynomial model structure. The cofibrant objects are the $n$-reduced projectively cofibrant objects and the fibrant objects are the $n$-polynomial functors. In particular, cofibrant-fibrant objects of this model structure are the projectively cofibrant $n$-homogeneous functors. This model structure is cellular, proper, stable and topological, and in the case of $\s$-valued orthogonal functors this model structure is spectral. We call this the $n$-homogeneous model structure and denote it by $\homog{n}(\es{J}_0, \es{C})$. 
\end{prop}

\begin{rem}
The model structure of \cite[Proposition 6.9]{BO13} has as weak equivalences those maps which induce objectwise weak equivalences on the $n$-th derivatives of their $n$-polynomial approximations. We showed in \cite[Proposition 8.2]{TaggartUnitary}, that the class of such equivalences is precisely the class of $D_n$-equivalences. The proof of \cite[Proposition 8.2]{TaggartUnitary} is valid for $\s$-valued orthogonal functors since $\s$-valued $n$-homogeneous functors admit and analogous classification in terms of spectral with an $O(n)$-action, see e.g., \cite[\S11]{BO13}. 
\end{rem}
	
The $n$-homogeneous model structure is (zigzag) Quillen equivalent to spectra with an action of $O(n)$. 

\begin{prop}[{\cite[Proposition 8.3, Theorem 10.1, Theorem 11.3 $\&$ Corollary 11.4]{BO13}}]
Let $n \geq 0$. There is a zigzag of Quillen equivalences
\[
\homog{n}(\es{J}_0, \es{C}) \simeq_Q {\s}[O(n)].
\]	
\end{prop}

On the homotopy category level the Barnes-Oman zigzag of Quillen equivalences recovers Weiss' characterisation of homogeneous functors of degree $n$.

\begin{prop}[{\cite[Theorem 7.3]{We95}},{\cite[Theorem 11.5]{BO13}}]\label{prop: classification of n-homog}
Let $n \geq 1$. 
\begin{enumerate}
	\item An $n$-homogeneous functor $F$ is determined by and determines a spectrum $\partial_nF$ with an $O(n)$-action.
	\item A $\T$-valued $n$-homogeneous functor $F$ is objectwise weak homotopy equivalent to the functor 
\[
V \longmapsto \Omega^\infty[ (S^{\bb{R}^n \otimes V} \wedge \partial_nF)_{hO(n)}],
\]
and any functor of the above form is homogeneous of degree $n$.
	\item A $\s$-valued $n$-homogeneous functor $F$ is objectwise weak homotopy equivalent to the functor 
\[
V \longmapsto (S^{\bb{R}^n \otimes V} \wedge \partial_nF)_{hO(n)},
\]
and any functor of the above form is homogeneous of degree $n$.
\end{enumerate}
\end{prop}

%%%%%%%%%%%%%%%%%%%%%%%%%%%%%%%%%%%%%%%%%%%%%%
		\subsection{Local homogeneous functors}
%%%%%%%%%%%%%%%%%%%%%%%%%%%%%%%%%%%%%%%%%%%%%%

\begin{definition}
Let $S$ be a set of maps in $\es{C}$. An orthogonal functor $F$ is \emph{$S$-locally homogeneous of degree $n$} if it is objectwise $S$-local and $n$-homogeneous.
\end{definition}

\begin{lem}\label{lem: E-local layers are E-locally n-homog}
Let $S$ be a set of maps of in $\es{C}$, and $F$ and orthogonal functor. For $n \geq 1$, there is a homotopy fibre sequence 
\[
D_n^SF \longrightarrow T_n^SF \longrightarrow T_{n-1}^SF,
\]
in which $D_n^S(F)$ is
\begin{enumerate}
	\item homogeneous of degree $n$; and,
	\item $S$-locally $n$-homogeneous if, in addition, the class of $S$-local objects is closed under sequential homotopy colimits. 
	\end{enumerate}
\end{lem}
\begin{proof}
By \cite[Lemma 5.5]{We95} the homotopy fibre of a map between $n$-polynomial functors is $n$-polynomial, hence $D_n^SF$ is $n$-polynomial. Applying $T_{n-1}$ to the homotopy fibre sequence, yields that the $(n-1)$-polynomial approximation of $D_n^SF$ is objectwise weakly contractible, proving $(1)$. 

For $(2)$, observe that the homotopy fibre of a map between $S$-local objects is $S$-local and when the class of $S$-local objects is closed under sequential homotopy colimits, $T_nL_SF$ is $S$-local for all $n$. 
\end{proof}

\begin{exs}\hspace{10cm}
	\begin{enumerate}
		\item For homological localization at the Eilenberg-Maclane spectrum associated to a subring $R$ of the rationals, $D_n^{HR}F$ is $HR$-locally $n$-homogeneous for any orthogonal functor $F$.
		\item For nullification at a based finite cell complex $W$, $D_n^WF$ is $W$-locally $n$-homogeneous for any $\T$-valued orthogonal functor $F$.
		\item For a spectrum $E$ whose associated localization of spectra is smashing, $D_n^EF$ is $E$-locally $n$-homogeneous for any $\s$-valued orthogonal functor. 
	\end{enumerate}
\end{exs}

\begin{prop}\label{prop: E local n-homog model}
Let $S$ be a set of maps in $\es{C}$. There is model category structure on the category of orthogonal functors with cofibrations the cofibrations of the $n$-homogeneous model structure and fibrant objects the $n$-polynomial functors whose $n$-th derivative is objectwise $S$-local in the $n$-th intermediate category. This model structure is cellular, left proper and topological, and in the case of $\s$-valued orthogonal functors this model structure is spectral. We call this the $S$-local $n$-homogeneous model structure and denote it by $\homog{n}(\es{J}_0, L_S\es{C})$. 
\end{prop}
\begin{proof}
We left Bousfield localize the $n$-homogeneous model structure at the set of maps 
\[
\es{K}_n = \{ \es{J}_n(U,-) \wedge j \mid U \in \es{J}, j \in J_{L_S\es{C}} \}.
\]
This left Bousfield localization exists since the $n$-homogeneous model structure is cellular and left proper by \cite[Lemma 6.1]{Ba17}. The description of the cofibrations follows immediately. 

The fibrant objects are the $\es{K}_n$-local objects which are also fibrant in the $n$-homogeneous model structure, i.e., those $n$-polynomial functors $Z$ for which the induced map
\[
[\es{J}_n(U,-) \wedge B , Z] \longrightarrow [\es{J}_n(U,-) \wedge A, Z],
\]
is an isomorphism for all maps $\es{J}_n(U,-) \wedge A \to \es{J}_n(U,-) \wedge B$ in $\es{K}_n$. A straightforward adjunction argument and the definition of the $n$-th derivative of an orthogonal functor yield the required characterisation of the fibrant objects. 
\end{proof}

\begin{cor}
Let $S$ be a set of maps in $\es{C}$. The cofibrant objects of the $S$-local $n$-homogeneous model structure are the projectively cofibrant functors which are $n$-reduced.	
\end{cor}
\begin{proof}
	The $S$-local $n$-homogeneous model structure is a particular left Bousfield localization of the $n$-homogeneous model structure, hence has the same cofibrant objects. The result follows by the orthogonal calculus version of \cite[Corollary 8.6]{TaggartUnitary}.
\end{proof}

The $S$-local $n$-homogeneous model structure behaves like a right Bousfield localization of the $S$-local $n$-polynomial model structure in the following sense.

\begin{lem}
Let $S$ be a set of maps in $\es{C}$. The adjoint pair
\[
\adjunction{ \mathds{1}}{\homog{n}(\es{J}_0,L_S\es{C})}{\poly{n}(\es{J}_0, L_S\es{C})}{ \mathds{1}},
\]
is a Quillen adjunction. 
\end{lem}
\begin{proof}
The cofibrations of the $S$-local $n$-homogeneous model structure are the cofibrations of the $n$-homogeneous model structure, which are contained in the cofibrations of the $n$-polynomial model structure, which in turn are precisely the cofibrations of the $S$-local $n$-polynomial model structure, hence 
\[
\mathds{1} \colon \homog{n}(\es{J}_0, L_S\es{C}) \longrightarrow \poly{n}(\es{J}_0, L_S\es{C}),
\]
preserves cofibrations.

On the other hand, to show that the right adjoint is right Quillen it suffices to show that the identity functor sends fibrant objects in the $S$-local $n$-polynomial model structure to fibrant objects in the $S$-local $n$-homogeneous model structure. This follows from Lemma \ref{lem: derivative E-local} since the fibrant objects in the $S$-local $n$-polynomial model structure are the $S$-locally $n$-polynomial functors by Proposition \ref{prop: S-local n-poly model structure} and the fibrant objects of the $S$-local $n$-homogeneous model structure are the $n$-polynomial functors with $S$-local $n$-th derivative by Proposition \ref{prop: E local n-homog model}.
%%HTT Corollary A.3.7.2.
\end{proof}

%%%%%%%%%%%%%%%%%%%%%%%%%%%%%%%%%%%%%%%%%%%%%%
		\subsection{Characterisations for stable localizations}
%%%%%%%%%%%%%%%%%%%%%%%%%%%%%%%%%%%%%%%%%%%%%%

We obtain a characterisation of the fibrations of the $S$-local $n$-homogeneous model structure when the localizing set $S$ is stable in the sense of \cite[Definition 4.2]{BarnesRoitzheimStable}, i.e., when the class of $S$-local spaces is closed under suspension.  For the statement of the following result recall the definition of the $n$-th derivative of an orthogonal functor from Definition~\ref{def: derivative}.

\begin{prop}\label{prop: fibs of E-local n-poly}
If $S$ is a set of maps in $\es{C}$ which is stable, then the fibrations of the $S$-local $n$-homogeneous model structure are those maps $f \colon X \to Y$ which are fibrations in the $n$-polynomial model structure such that 
\[
X^{(n)} \longrightarrow Y^{(n)},
\]
is a objectwise fibration in $L_S\es{C}$.
\end{prop}
\begin{proof}
We first given an explicit characterisation of the acyclic cofibrations since the fibrations are characterised by the right lifting property against these maps. The maps in $\es{K}_n$ are cofibrations between cofibrant objects since $\es{J}_n(U,-)$ is cofibrant in $\homog{n}(\es{J}_0, \es{C})$ and the maps in $J_{L_S\es{C}}$ are cofibrations of the $S$-local model structure on $\es{C}$. Moreover, since the localizing set $S$ is stable, it follows the set of generating acyclic cofibrations $J_{L_S\es{C}}$ is stable and in turn that the set $\es{K}_n$ is stable. Hence by \cite[Theorem 4.11]{BarnesRoitzheimStable}, the generating acyclic cofibrations are given by the set $J_{\homog{n}} \cup \Lambda(\es{K}_n)$, where $J_{\homog{n}}$ is the set of the generating acyclic cofibrations of the $n$-homogeneous model structure and $\Lambda(\es{K}_n)$ the set of horns on $\es{K}_n$ in the sense of \cite[Definition 4.2.1]{Hi03}. As horns in topological model categories are given by pushouts and $\es{K}_n$ is a set of cofibrations between cofibrant objects it suffices to use the set $J_{\homog{n}} \cup \es{K}_n$ as the generating acyclic cofibrations of the $S$-local $n$-homogeneous model structure.

If $f \colon X \to Y$ is a map with the right lifting property with respect to $J_{\homog{n}} \cup \es{K}_n$, then $f$ has the right lifting property with respect to $J_{\homog{n}}$ and the right lifting property with respect to $\es{K}_n$ independently. Having the right lifting property with respect to $J_{\homog{n}}$ is equivalent to being a fibration in the $n$-polynomial model structure. On the other hand, a map in $\es{K}_n$ is of the form $\es{J}_n(U,-) \wedge A \to \es{J}_n(U,-) \wedge B$ for $A \to B$ a generating acyclic cofibration of the $S$-local model structure on $\es{C}$. A lift in the diagram 
\[
\xymatrix{
\es{J}_n(U,-) \wedge A \ar[r] \ar[d] & X  \ar[d] \\
\es{J}_n(U,-) \wedge B \ar[r] \ar@{-->}[ur] & Y \\
}
\]
(indicated by the dotted arrow) exists if and only if the lift in the diagram
\[
\xymatrix{
A \ar[r] \ar[d] & \nat_0(\es{J}_n(U,-) , X)  \ar[d] \\
B \ar[r] \ar@{-->}[ur] & \nat_0(\es{J}_n(U,-) ,Y) \\
}
\]
exists, which is equivalent to the statement that $X^{(n)} \to Y^{(n)}$ is a objectwise fibration of $S$-local objects in $\es{C}$, see \S\ref{ssection: derivatives}.
\end{proof}

This specialises to homological localizations.

\begin{cor}\label{cor: Fibrations in E-local n-homog}
Let $E$ be a spectrum. The fibrations of the $E$-local $n$-homogeneous model structure are those maps $f \colon X \to Y$ which are fibrations in the $n$-polynomial model structure  such that 
\[
X^{(n)} \longrightarrow Y^{(n)},
\]
is a objectwise fibration in $L_E\es{C}$.
\end{cor}
\begin{proof}
Combine Proposition \ref{prop: fibs of E-local n-poly} with \cite[Example 4.3]{BarnesRoitzheimStable}.	
\end{proof}

\begin{cor}\label{cor: (co)fibrants in E-local n-homog}
Let $E$ be a spectrum. An orthogonal functor $F$ is fibrant in the $E$-local $n$-homogeneous model structure if and only if $F$ is $n$-polynomial and $F^{(n)}$ is objectwise $E$-local. In particular, the bifibrant objects are the projectively cofibrant $n$-homogeneous functors with $E$-local $n$-th derivative. 
\end{cor}
\begin{proof}
Apply Corollary \ref{cor: Fibrations in E-local n-homog} to the map $F \to \ast$.  
\end{proof}

%%%%%%%%%%%%%%%%%%%%%%%%%%%%%%%%%%%%%%%%%%%%%%
		\subsection{Differentiation as a Quillen functor}
%%%%%%%%%%%%%%%%%%%%%%%%%%%%%%%%%%%%%%%%%%%%%%

The $n$-th derivative is a right Quillen functor as part of a Quillen equivalence between the $n$-homogeneous model structure and the $n$-th intermediate category; the adjunction
\[
\adjunction{\res_0^n/O(n)}{{\Fun}_{O(n)}(\es{J}_n, \es{C}[O(n)])}{\homog{n}(\es{J}_0, \es{C})}{{\ind}_0^n\varepsilon^*},
\]
is a Quillen equivalence, \cite[Theorem 10.1]{BO13}. We now show that this extends to the $S$-local situation.

\begin{thm}\label{thm: E-local diff as Quillen}
Let $S$ be a set of maps in $\es{C}$. The adjoint pair
\[
\adjunction{\res_0^n/O(n)}{L_S{\Fun}_{O(n)}(\es{J}_n, \es{C}[O(n)])}{\homog{n}(\es{J}_0, L_S\es{C})}{{\ind}_0^n\varepsilon^*},
\]
is a Quillen equivalence between the $S$-local model structures. 
\end{thm}
\begin{proof}
The left adjoint applied to the localizing set of the $S$-local $n$-stable model structure is precisely the localization set of the $S$-local $n$-homogeneous model structure, hence the result follows from \cite[Theorem 3.3.20(1)]{Hi03}.
\end{proof}

\begin{cor}\label{cor: zigzag}
Let $S$ be a set of maps of based spaces, and $n \geq 0$. There is a zigzag of Quillen equivalences
\[
\homog{n}(\es{J}_0, L_S\T) \simeq_Q {\s}(L_S\T)[O(n)].
\]
\end{cor}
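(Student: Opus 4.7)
The plan is to obtain the zigzag by splicing together the two Quillen equivalences already established in Theorem \ref{thm: E-local diff as Quillen} and Theorem \ref{thm: E-local spectra and intermediate}. Both of these theorems feature the $S$-local $n$-stable model structure $L_S{\Fun}_{O(n)}(\es{J}_n, O(n)\T)$ on the $n$-th intermediate category as the source of a left Quillen equivalence: the former identifies it with $\homog{n}(\es{J}_0, L_S\T)$ via the restriction--induction adjunction $(\res_0^n/O(n)) \dashv \ind_0^n\varepsilon^*$, while the latter identifies it with ${\s}(L_S\T)[O(n)]$ via the prolongation--restriction adjunction $(\alpha_n)_! \dashv (\alpha_n)^*$. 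Since both left adjoints emanate from the intermediate category, concatenating through this common middle term produces the zigzag
\[
\homog{n}(\es{J}_0, L_S\T) \xleftarrow{\;\simeq_Q\;} L_S{\Fun}_{O(n)}(\es{J}_n, O(n)\T) \xrightarrow{\;\simeq_Q\;} {\s}(L_S\T)[O(n)],
\]
which is exactly the statement to prove.

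There is no real obstacle remaining at this stage, as the corollary is purely a bookkeeping consequence of the two preceding theorems. Conceptually, the content is that the Barnes--Oman zigzag of Quillen equivalences identifying $n$-homogeneous orthogonal functors with spectra carrying an $O(n)$-action persists after $S$-localization, with each leg of the zigzag $S$-localizing independently on both source and target. In particular, the intermediate category continues to mediate between the geometric side (orthogonal functors) and the stable side (spectra with an action) even after the localization at $S$ has been imposed levelwise on each category.
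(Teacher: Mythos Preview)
Your proposal is correct and matches the paper's approach exactly: the corollary is stated without proof in the paper because it is an immediate concatenation of Theorem~\ref{thm: E-local diff as Quillen} and Theorem~\ref{thm: E-local spectra and intermediate} through the common intermediate category $L_S{\Fun}_{O(n)}(\es{J}_n, O(n)\T)$, precisely as you have written. The zigzag you display, with both left adjoints emanating from the intermediate category, is the one the paper later records explicitly in the summary at the start of the Applications part.
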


\begin{ex}
Let $R$ be a subring of the rationals. Then there is a zigzag of Quillen equivalences	
\[
\homog{n}(\es{J}_0, L_{HR}\T) \simeq_Q {\s}_{HR}[O(n)],
\]
between $HR$-local $n$-homogeneous functors and $HR$-local\footnote{In particular, the $HR$-local model structure on spectra is identical to the levelwise $HR$-local model structure since a spectrum is $HR$-local if and only if it is levelwise $HR$-local, see e.g., \cite[Lemma 8.6]{BarnesRoitzheimFramings}.} spectra with an action of $O(n)$. 
\end{ex}

\begin{cor}\label{cor: zigzag spectra}
Let $S$ be a set of maps of spectra, and $n \geq 0$. There is a zigzag of Quillen equivalences
\[
\homog{n}(\es{J}_0, L_S\s) \simeq_Q L_S\s[O(n)].
\]
\end{cor}

\begin{ex}
Let $E$ be a spectrum. Then there is a zigzag of Quillen equivalences	
\[
\homog{n}(\es{J}_0, L_{E}\s) \simeq_Q {\s}_{E}[O(n)],
\]
between $E$-local $n$-homogeneous functors and $E$-local spectra with an action of $O(n)$. 
\end{ex}

%%%%%%%%%%%%%%%%%%%%%%%%%%%%%%%%%%%%%%%%%%%%%%
		\subsection{The classification}
%%%%%%%%%%%%%%%%%%%%%%%%%%%%%%%%%%%%%%%%%%%%%%

As in the classical theory, any $S$-locally $n$-homogeneous functor may be expressed concretely in terms of a levelwise $S$-local spectrum with an action of $O(n)$. The proof of which follows as in the classical setting, \cite[Theorem 7.3]{We95} and can be realised through the derived equivalence of homotopy categories provided by our zigzag of Quillen equivalences.

\begin{thm}\label{thm: classification of E-local n-homog}\label{thm: classification of spectral E-local n-homog}
Let $S$ be a set of maps of in $\es{C}$ and $n \geq 1$. 
\begin{enumerate}
\item An $S$-local $n$-homogeneous functor $F$ is determined by and determines an appropriately $S$-local spectrum with an $O(n)$-action, denoted $\partial_n^S F$.
\item A $\T$-valued $S$-local $n$-homogeneous functor $F$ is objectwise weakly equivalent to the functor 
\[
V \longmapsto \Omega^\infty[ (S^{\bb{R}^n \otimes V} \wedge \partial_n^S F)_{hO(n)}],
\]
and any functor of the above form is objectwise $S$-local and $n$-homogeneous. 
\item A $\s$-valued $S$-local $n$-homogeneous functor $F$ is objectwise weakly equivalent to the functor
\[
V \longmapsto (S^{\bb{R}^n \otimes V} \wedge \partial_n^S F)_{hO(n)},
\]
and any functor of the above form is objectwise $S$-local and $n$-homogeneous. 
\end{enumerate}
\end{thm}

%%%%%%%%%%%%%%%%%%%%%%%%%%%%%%%%%%%%%%%%
\part{Applications}
%%%%%%%%%%%%%%%%%%%%%%%%%%%%%%%%%%%%%%%%

%%%%%%%%%%%%%%%%%%%%%%%%%%%%%%%%%%%%%%%%
	\section{Bousfield classes}
%%%%%%%%%%%%%%%%%%%%%%%%%%%%%%%%%%%%%%%%

%%%%%%%%%%%%%%%%%%%%%%%%%%%%%%%%%%%%%%%%
		\subsection{Bousfield classes}
%%%%%%%%%%%%%%%%%%%%%%%%%%%%%%%%%%%%%%%%

For a spectrum $E$, the \emph{Bousfield class} of $E$, denoted $\langle E \rangle$, is the equivalence class of $E$ under the relation: $E \sim E'$ if for any spectrum $X$, $E \wedge X =0$ if and only if $E'\wedge X =0$. If $\langle E \rangle = \langle E' \rangle$, then the classes of $E_\ast$-isomorphisms and $E_\ast'$-isomorphisms agree and hence the localization functors (on spaces or spectra) agree. The collection of all Bousfield classes forms a lattice, with partial ordering $\langle E \rangle \leq \langle E' \rangle$ given by reverse containment, i.e., if and only if the class of $E'$-acyclic spectra is contained in the class of $E$-acyclic spectra, in particular, the partial ordering induces a natural transformation $L_{E'} \to L_E$. Bousfield classes have been studied at length, see e.g, \cite{BousfieldLocalSpectra, RavenelLocalizations}.

A similar story remains true unstably. Given a based space $W$ the \emph{unstable Bousfield class} of $W$, or the \emph{nullity class} of $W$, is the equivalence class $\langle W \rangle$ of all spaces $W'$ such that the class of $W$-periodic\footnote{$W$-periodic spaces are precisely $W$-local spaces. This change in terminology is classical, see e.g, \cite{BousfieldPeriodicity, FarjounCellular}.} spaces agrees with the class of $W'$-periodic spaces. There is a partial ordering $\langle W \rangle \leq \langle W' \rangle$ given by reverse containment, i.e., if and only if every $W'$-periodic space is $W$-periodic. In particular, the relation $\langle W \rangle \leq \langle W' \rangle$ implies that every $W$-local equivalence is a $W'$-local equivalence and there is a natural transformation $P_W \to P_{W'}$, which is a $W'$-localization. Nullity classes have also been studied at length, see e.g, \cite{BousfieldPeriodicity, FarjounCellular}. 

\begin{rem}
It is worth noting that in both cases there is a choice of ordering of the equivalence classes, and our choices have been made to align with the predominant references on the subject, which unfortunately means the ''stable'' and ``unstable'' directions are dual. The choice of ordering used by Bousfield and that of Farjoun also differ, adding further confusion to the literature on these matters. 
\end{rem}

\begin{thm}\label{thm: BF classes iff towers}
Let $S$ and $S'$ be sets of maps in $\es{C}$. The class of $S$-local objects agrees with the class of $S'$ local objects if and only if for every orthogonal functor $F$, the $S$-local Weiss tower of $F$ is objectwise weakly equivalent to the $S'$-local Weiss tower of $F$.
\end{thm}
\begin{proof}
If the class of $S$-local objects agrees with the class of $S'$-local objects, then the localization functors $L_S$ and $L_{S'}$ agree on $\es{C}$ and hence on the level of orthogonal functors. In particular, for every orthogonal functor $F$, the canonical map\footnote{This map is induced from the $S$-local objects being contained in the $S'$-local objects. We could also use the canonical $L_{S'}F \to L_{S}F$ since the $S$-local objects also contained the ${S'}$-local objects.} 
\[
L_S F \to L_{S'}F,
\] 
is a objectwise weak equivalence. Now, consider the commutative diagram
\[
\xymatrix{
D_n^S F \ar[r] \ar[d] & T_n^S F \ar[r] \ar[d] & T_{n-1}^S F \ar[d] \\
D_n^{S'} F \ar[r] & T_n^{S'} F \ar[r] & T_{n-1}^{S'}F \\
}
\]
in which the rows are homotopy fibre sequences. For each $n \geq 0$, the map
\[
T_n^S F \longrightarrow T_n^{S'}F,
\]
is a objectwise weak equivalence since polynomial approximation preserves objectwise weak equivalences. It follows that the left-most vertical arrow is also a objectwise weak equivalence and that the $S$-local Weiss tower is objectwise weakly equivalent to the $S'$-local Weiss tower.

The converse is immediate from specialising for every object $C \in \es{C}$ to the constant functor at $C$. 
\end{proof}

\begin{exs} \hspace{10cm}
\begin{enumerate}
	\item Let $E$ and $E'$ be spectra. For every orthogonal functor $F$ the $E$-local Weiss tower of $F$ and the $E'$-local Weiss tower of $F$ agree if and only if $E$ and $E'$ are Bousfield equivalent.
	\item Let $W$ and $W'$ be based spaces. For every $\T$-valued orthogonal functor $F$ the $W$-local Weiss tower of $F$ and the $W'$-local Weiss tower of $F$ agree if and only if $W$ and $W'$ have the same nullity class.	
\end{enumerate}
\end{exs}

%%%%%%%%%%%%%%%%%%%%%%%%%%%%%%%%%%%%%%%%
		\subsection{Bousfield classes and model categories for orthogonal calculus}
%%%%%%%%%%%%%%%%%%%%%%%%%%%%%%%%%%%%%%%%

On the model category level, we have the following. 

\begin{thm}\label{thm: Bousfield classes homog QE}
Let $S$ and $S'$ be sets of maps of maps in $\es{C}$. The class of $S$-local objets in $\es{C}$ agrees with the class of $S'$-local objects in $\es{C}$ if and only if there are equalities of model structures making the diagram
\[
\xymatrix{
{\Fun}(\es{J}_0, L_S\es{C}) \ar@{=}[d] \ar@<1ex>[r]^{\mathds{1}} & \poly{n}(\es{J}_0, L_S\es{C}) \ar@<-1ex>[r]_{\mathds{1}} \ar@<1ex>[l]^{\mathds{1}} \ar@{=}[d] & \homog{n}(\es{J}_0, L_S\es{C}) \ar@<-1ex>[l]_{\mathds{1}} \ar@{=}[d] \\
{\Fun}(\es{J}_0, L_{S'}\es{C}) \ar@<1ex>[r]^{\mathds{1}} & \poly{n}(\es{J}_0, L_{S'}\es{C})  \ar@<-1ex>[r]_{\mathds{1}} \ar@<1ex>[l]^{\mathds{1}} & \homog{n}(\es{J}_0, L_{S'}\es{C}) \ar@<-1ex>[l]_{\mathds{1}}
}
\]
commute.
\end{thm} 
\begin{proof}
For one direction assume that the class of $S$-local objects agrees with the class of $S'$-local objects. Then the $S$-local model structure and the $S'$-local model structure on $\es{C}$ agree as they have the same cofibrations and fibrant objects. This equality lifts to the local projective model structures on the category of orthogonal functors. As left Bousfield localization does not alter the cofibrations, the cofibrations of the $S$-local $n$-polynomial model structure agree with the cofibrations of the $S'$-local $n$-polynomial model structure. These model structures also have the same fibrant objects since a functor is $S$-locally $n$-polynomial if and only if it is $S'$-local $n$-polynomial under our assumption. 

For the local $n$-homogeneous model structures, recall that these are certain left Bousfield localizations of the $n$-homogeneous model structure (see Proposition \ref{prop: E local n-homog model}), hence have the same cofibrations. As before, these model structures have the same fibrant objects since our assumption together with Lemma \ref{lem: derivative E-local} implies that the $n$-th derivative of a functor is $S$-local if and only if it is $S'$-local, and the fibrant objects are the $n$-polynomial functors with local derivatives, see Proposition \ref{prop: E local n-homog model}.

For the converse note that since the $S$-local model structure on the category of orthogonal functors agrees with the $S'$-local model structure, the objectwise $S$-local equivalences are precise the objectwise $S$-local equivalences. It follows that the local model structures on $\es{C}$ must agree. 
\end{proof}

%%%%%%%%%%%%%%%%%%%%%%%%%%%%%%%%%%%%%%%%
		\subsection{The partial ordering of Bousfield classes}
%%%%%%%%%%%%%%%%%%%%%%%%%%%%%%%%%%%%%%%%

\begin{lem}\label{lem: inequality of unstable Bousfield classes}
Let $S$ and $S'$ be sets of maps in $\es{C}$ and $F$ an orthogonal functor. If the class of $S'$-local objects of $\es{C}$ is contained in the class of $S$-local objects then,
\begin{enumerate}
	\item there is an $S'$-local equivalence $D_n^{S} F \to D_n^{S'}F$; and,
	\item if $F$ is reduced, then the $S$-local Weiss tower of $F$ is $S'$-locally equivalent to the $S'$-local Weiss tower of $F$.
\end{enumerate}
\end{lem}
\begin{proof}
For (1), note that the map on derivatives $\partial_n^{S} F \to \partial_n^{S'} F$ induced by the natural transformation $L_{S} \to L_{S'}$ is an $S'$-local equivalence, hence the $n$-homogeneous functors which correspond to these spectra are $S'$-locally equivalent, i.e., the map $D_n^{S} F \to D_n^{S'}F$ is an $S'$-local equivalence. For (2), since $F$ is reduced \cite[Corollary 8.3]{We95} implies that there is a commutative diagram
\[
\xymatrix{
T_n^{S} F \ar[r] \ar[d] & T_{n-1}^{S} F  \ar[r] \ar[d] & \ar[d] R_n^{S} F \\
T_n^{S'}F \ar[r] & T_{n-1}^{S'}F \ar[r] & R_n^{S'} F
}
\]
in which both rows are homotopy fibre sequences. The map $R_n^{S} F \to R_n^{S'}F$ is an $S'$-local equivalence by part (1), and the map $T_0^{S} F \to T_0^{S'} F$ is also an $S'$-local equivalence since $F$ is reduced. An induction argument on the degree of polynomials yields the result. 
\end{proof}

\begin{exs}\label{ex: BF classes}\hspace{10cm}
\begin{enumerate}
	\item Let $E$ and $E'$ be spectra and $F$ an orthogonal functor. If $\langle E \rangle \leq \langle E'\rangle$, then
\begin{enumerate}
	\item there is an $E$-local equivalence $D_n^{E'} F \to D_n^{E}F$; and,
	\item if $F$ is reduced, then the $E'$-local Weiss tower of $F$ is $E$-locally equivalent to the $E$-local Weiss tower of $F$.
\end{enumerate}
	\item Let $W$ and $W'$ be based spaces and $F$ a $\T$-valued orthogonal functor. If $\langle W \rangle \leq \langle W'\rangle$, then
\begin{enumerate}
	\item there is an $W'$-local equivalence $D_n^{W} F \to D_n^{W'}F$; and,
	\item if $F$ is reduced, then the $W$-local Weiss tower of $F$ is $W'$-locally equivalent to the $W'$-local Weiss tower of $F$.
\end{enumerate}
\end{enumerate}	
\end{exs}

%%%%%%%%%%%%%%%%%%%%%%%%%%%%%%%%%%%%%%%%%%%%%%
		\subsection{The Telescope Conjecture}
%%%%%%%%%%%%%%%%%%%%%%%%%%%%%%%%%%%%%%%%%%%%%%

The height $n$ Telescope Conjecture of Ravenel \cite[Conjecture 10.5]{RavenelLocalizations} asserts that the $T(n)$-localization and $K(n)$-localization of spectra agree. There are numerous equivalent formalisations of the conjecture see e.g., \cite[Proposition 3.6]{BarthelChromaticConjectures} and we choose the following as it best suits any possible interaction with the calculus. 

\begin{conj}[The height $n$ Telescope Conjecture]
	Let $n \geq 0$. The Bousfield class of $T(n)$ agrees with the Bousfield class of $K(n)$.
\end{conj}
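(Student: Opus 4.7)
This final statement is Ravenel's Telescope Conjecture itself, so I should be honest that it is not an assertion one can propose a self-contained proof for: it is a famously open problem, settled only trivially at $n = 0$ and nontrivially only at $n = 1$ by Bousfield, Mahowald and Miller, and in fact it was recently \emph{disproved} for all heights $n \geq 2$ by Burklund--Hahn--Levy--Schlank. The paper is not purporting to prove it; the conjecture is placed here as a hypothesis to be fed into the subsequent Theorem \ref{thm: telescope implies agree towers}, which gives the orthogonal-calculus reformulation via Corollary \ref{cor: BF classes}.

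If one were to attempt a direct proof, the plan would proceed as follows. One containment $\langle T(n) \rangle \geq \langle K(n) \rangle$ is formal, since, after smashing with a finite type-$n$ complex $F$ carrying a $v_n$-self map $v$, one has $K(n) \wedge F \simeq v^{-1}(K(n) \wedge F)$ inside $T(n) \wedge F$, so every $T(n)$-acyclic spectrum is $K(n)$-acyclic. The content is the reverse inclusion: one must show that for some (equivalently any) finite type-$n$ complex $F$ with $v_n$-self map $v$, the natural map $v^{-1}F \to L_{K(n)} F$ is a $\pi_\ast$-isomorphism. The classical programme, as carried out at height $1$, is to model $v^{-1}F$ via an Adams-type spectral sequence built from $K(n)$- or $E(n)$-based resolutions, identify its $E_2$-term with continuous cohomology of the Morava stabiliser group acting on $(E_n)_\ast F$, and prove collapse and strong convergence by a vanishing-line argument propagated from height $1$.

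The main obstacle is exactly what was discovered in the recent counterexamples: at height $n \geq 2$ the $T(n)$-local sphere contains unbounded exotic $v_n$-periodic families not present in the $K(n)$-local sphere, detected via algebraic $K$-theory and the cyclotomic trace applied to suitable chromatic input. So any honest ``proof proposal'' of the conjecture as stated would have to confront, and somehow neutralise, these Burklund--Hahn--Levy--Schlank obstructions, which by construction live exactly in the gap the conjecture asserts to be empty.

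Within the scope of the present paper, the useful move is not to try to prove the conjecture but to run the logical implications: by Theorem \ref{thm: Bousfield classes homog QE} and Corollary \ref{cor: BF classes}, any counterexample to the Telescope Conjecture at height $n$ immediately furnishes an orthogonal functor whose $T(n)$-local Weiss tower differs from its $K(n)$-local one, and Lemma \ref{lem: telescope and WSS} then yields a concrete difference on the $E_r$-pages of the associated Weiss spectral sequences. So rather than a proof of the conjecture, the natural deliverable here is its conditional translation into the calculus, which is what the paper in fact supplies in the theorems that follow.
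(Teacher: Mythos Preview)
Your assessment is correct: the statement is labelled a \emph{Conjecture} in the paper and is not proved there; it is merely recorded as a hypothesis to be fed into the subsequent results (Theorem~\ref{thm: telescope implies agree towers} and Lemma~\ref{lem: telescope and WSS}). Your contextual remarks about its status, including the recent disproof at heights $n\geq 2$, are accurate and go beyond what the paper says, but there is nothing to compare since neither you nor the paper offers a proof.
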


\begin{cor}
Let $n \geq 0$. The validity of the height $n$ Telescope Conjecture implies equality of model structures
\[
\xymatrix{
{\Fun}(\es{J}_0, L_{T(n)}\es{C}) \ar@{=}[d] \ar@<1ex>[r]^{\mathds{1}} & \poly{n}(\es{J}_0, L_{T(n)}\es{C}) \ar@<-1ex>[r]_{\mathds{1}} \ar@<1ex>[l]^{\mathds{1}} \ar@{=}[d] & \homog{n}(\es{J}_0, L_{T(n)}\es{C}) \ar@<-1ex>[l]_{\mathds{1}} \ar@{=}[d] \\
{\Fun}(\es{J}_0, L_{K(n)}\es{C}) \ar@<1ex>[r]^{\mathds{1}} & \poly{n}(\es{J}_0, L_{K(n)}\es{C})  \ar@<-1ex>[r]_{\mathds{1}} \ar@<1ex>[l]^{\mathds{1}} & \homog{n}(\es{J}_0, L_{K(n)}\es{C}) \ar@<-1ex>[l]_{\mathds{1}}
}
\]
\end{cor}
\begin{proof}
	The Telescope Conjecture implies that the Bousfield class of $T(n)$ and the Bousfield class of $K(n)$, agree, hence the result follows by Theorem \ref{thm: Bousfield classes homog QE}.
\end{proof}

The following is an immediate corollary to Theorem \ref{thm: BF classes iff towers}.

\begin{cor}\label{cor: telescope implies agree towers}
Let $n \geq 0$. The height $n$ Telescope Conjecture holds if and only if for every orthogonal functor $F$ the $K(n)$-local Weiss tower of $F$ and the $T(n)$-local Weiss tower of $F$ agree.
\end{cor}

This provides new insight into the the height $n$ Telescope Conjecture. For example, to find a counterexample it now suffices to find an orthogonal functor such that one corresponding term in the $K(n)$-local and $T(n)$-local Weiss towers disagree. This can also be seen through the spectral sequences associated to the local Weiss towers. The $K(n)$-local and $T(n)$-local Weiss towers of an orthogonal functor $F$ produce two spectral sequences, 
\[
\pi_{t-s}D_s^{K(n)}F(V) \cong \pi_{t-s} ((S^{\bb{R}^s \otimes V} \wedge \partial_s^{K(n)}F)_{hO(n)}) \Rightarrow \pi_\ast \underset{d}{\holim}~ T_d^{K(n)}F(V),
\]
and, 
\[
\pi_{t-s}D_s^{T(n)}F(V) \cong \pi_{t-s} ((S^{\bb{R}^s \otimes V} \wedge \partial_s^{T(n)}F)_{hO(n)}) \Rightarrow \pi_\ast \underset{d}{\holim}~ T_d^{T(n)}F(V),
\]
These are closely related to the Telescope Conjecture as follows.

\begin{lem}\label{lem: telescope and WSS}
Let $F$ be an orthogonal functor. If the height $n$ Telescope Conjecture holds, then for all $r \geq 1$, the $E_r$-page of the $T(n)$-local Weiss spectral sequence is isomorphic to the $E_r$-page of the $K(n)$-local Weiss spectral sequence.
\end{lem}
\begin{proof}
It suffices to prove the claim for $r=1$. The validity of the height $n$ Telescope Conjecture implies that there is a natural transformation $L_{K(n)} \to L_{T(n)}$. This natural transformation induces a map $D_d^{K(n)}F \to D_d^{T(n)}F$, which by Corollary \ref{cor: telescope implies agree towers} is an objectwise weak equivalence. It hence suffices to show that the natural map $D_d^{K(n)}F \to D_d^{T(n)}F$ induces a map on the $E_1$-pages of the spectral sequences, that is, we have to show that the induced diagram
\[
\xymatrix{
\pi_{t-s}D_s^{K(n)}F(V) \ar[r]^{d_1^{K(n)}} \ar[d] & \pi_{t-s+1}D_{s+1}^{K(n)}F(V) \ar[d] \\
\pi_{t-s}D_s^{T(n)}F(V) \ar[r]_{d_1^{T(n)}} & \pi_{t-s+1}D_{s+1}^{T(n)}F(V) \\
}
\]
commutes for all $s$ and $t$. This follows from the commutativity of the induced diagram of long exact sequences induced by the diagram of homotopy fibre sequences, 
\[
\xymatrix{
D_s^{K(n)}F(V) \ar[r] \ar[d] & T_s^{K(n)}F(V) \ar[r] \ar[d] & T_{s-1}^{K(n)}F(V) \ar[d] \\
D_s^{T(n)}F(V) \ar[r] & T_s^{T(n)}F(V) \ar[r] & T_{s-1}^{T(n)}F(V) 
}
\]
and the construction of the $d_1$-differential in the homotopy spectral sequence associated to a tower of fibrations. 
\end{proof}

%%%%%%%%%%%%%%%%%%%%%%%%%%%%%%%%%%%%%%%%
	\section{The calculus for nullifications}
%%%%%%%%%%%%%%%%%%%%%%%%%%%%%%%%%%%%%%%%

%%%%%%%%%%%%%%%%%%%%%%%%%%%%%%%%%%%%%%%%
		\subsection{Nullifications of orthogonal functors}
%%%%%%%%%%%%%%%%%%%%%%%%%%%%%%%%%%%%%%%%
Bousfield, Farjoun and others, see e.g., \cite{BousfieldPeriodicity, BousfieldUnstable, FarjounCellular, CasacubertaUnstable}, have extensively studied the nullification of the category of based spaces at a based space $W$. This nullification is functorial giving a functor
\[
P_W \colon \T \longrightarrow \T,
\]
and the Bousfield-Friedlander localization of $\T$ at the endofunctor $P_W$ defines a model structure which we call the $W$-periodic model structure, and denote by $P_W\T$. This model structure is precisely the left Bousfield localization at the set $S = \{\ast \to W\}$, i.e., the $W$-periodic and $W$-local model structures agree. 

The endofunctor $P_W : \T \to \T$ extends objectwise to a functor
\[
P_W \colon \Fun(\es{J}_0, \T) \longrightarrow \Fun(\es{J}_0, \T),
\]
and the $W$-periodic model structure on spaces (see, e.g., \cite[\S\S9.8]{Bo01}) extends in a canonical way to give the Bousfield-Friedlander localization of the category of orthogonal functors at the functor $P_W$, which we denote by ${\Fun}(\es{J}_0, P_W\T)$, and call the $W$-periodic model structure. This model structure agrees with the $S$-local model structure on orthogonal functors for $S=\{\ast \to W\}$.

In this section we give an alternative construction of the model structures for $W$-local orthogonal calculus. The key to this is that the $W$-periodic model structure on based spaces is right proper.

\begin{rem}\label{rem: example of not right proper}
The process of left Bousfield localization can interfere with other model categorical properties, for instance left Bousfield localization need not preserve right properness. For example if $E = H\bb{Q}$, then the $H\bb{Q}$-local model structure on  based spaces is not right proper since there is a pullback square
\[
\xymatrix{
K(\bb{Q}/\bb{Z},0) \ar[r] \ar[d] & P \ar[d] \\
K(\bb{Z}, 1) \ar[r]^{\simeq H\bb{Q}} & K(\bb{Q},1)
}
\]
in which the right hand vertical map is a fibration, $P$ is contractible and the lower horizontal map is a $H\bb{Q}$-equivalence but the left hand vertical map is not. Another example is provided by Quillen in \cite[Remark 2.9]{QuillenRational}. 
\end{rem}

The property of being right proper has many advantages including the ability to right Bousfield localize. As such we investigate when the $S$-local model structure is right proper. It suffices to examine when the $f$-local model structure is right proper for some map $f \colon X \to Y$ of based spaces.

The following has motivation in \cite[Remark 9.11]{Bo01}, in which Bousfield remarks that the $f$-local model structure cannot be right proper unless the localization functor $L_f$ is equivalent to a nullification. We extend Bousfield's remark by showing that his nullification condition is both necessary and sufficient in a stronger sense than originally proposed by Bousfield. This result depends on two constructions also due to Bousfield; the first is the construction of a based space $A(f)$ associated to a map $f \colon X \to Y$ of based spaces, see \cite[Theorem 4.4]{BousfieldHomotopical}, the second is the nullification functor $P_W \colon \T \to \T$ associated to any based space $W$, see \cite[Theorem 2.10]{BousfieldPeriodicity}. This nullification functor has two key properties which we would also like to highlight; firstly, when $W$ is connected $P_W$ preserves disjoint unions, e.g.,~\cite[Theorem 9.9]{Bo01}, and secondly, $P_W$ is contractible when $W$ is not connected, see e.g.,~\cite[Example 2.3]{BousfieldPeriodicity}. For example, if $f$ is the map which induces localization with respect to integral homology, then $P_{A(f)}$ is Quillen's plus construction, see e.g., \cite[1.E.5]{FarjounCellular}.

\begin{prop}\label{prop: local model as a nullification}
Let $f \colon X \to Y$ be a map of based spaces. The $f$-local model structure on based spaces is right proper if and only if there exists a based space $A(f)$ and and equality of model structures
\[
L_f\T = P_{A(f)} \T,
\]
where $P_{A(f)}\T$ is the Bousfield-Friedlander localization \cite[Theorem 9.3]{Bo01}, at the nullification endofunctor 
\[
P_{A(f)} \colon \T \to \T.
\]
\end{prop}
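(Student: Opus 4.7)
The plan is to prove the two implications separately, each resting on the interplay between right properness and the preservation of homotopy fibre sequences by nullification functors. Throughout, since both $L_f\T$ and $P_{A(f)}\T$ are left Bousfield-type localizations of the Quillen model structure on $\T$, their cofibrations coincide, and so equality of the two model structures reduces to equality of the respective classes of weak equivalences.

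For the sufficiency direction, suppose $L_f\T = P_{A(f)}\T$. If $A(f)$ is disconnected then $P_{A(f)}$ is contractible and right properness is immediate, so I may assume $A(f)$ is connected. The fundamental property of nullification at a connected space is that $P_{A(f)}$ preserves homotopy fibre sequences, as established in \cite{FarjounCellular}. Combined with the Bousfield-Friedlander construction \cite[Theorem 9.3]{Bo01}, this preservation translates directly into the statement that $P_{A(f)}$-equivalences are stable under pullback along fibrations between fibrant objects, which is precisely right properness of $P_{A(f)}\T$.

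For the necessity direction, suppose $L_f\T$ is right proper, and take $A(f)$ to be the based space constructed in \cite[Theorem 4.4]{BousfieldHomotopical}. By design, this space ensures that $L_fZ$ and $P_{A(f)}Z$ agree on connected $Z$. What remains is to extend this agreement to disconnected inputs, for then the two model structures have the same weak equivalences. Nullification always acts componentwise on the connected summands of its input, so any discrepancy between $L_f$ and $P_{A(f)}$ must arise from a failure of $L_f$ to preserve disjoint unions. Arguing by contrapositive, the aim is to turn any such failure into an explicit violation of right properness along the lines of the square in Remark \ref{rem: example of not right proper}: given an $f$-equivalence that conflates components, one pulls it back along a fibration from a contractible space into an appropriate target, producing a pullback square in which the lower horizontal map is an $f$-equivalence but the left vertical map is not.

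The main obstacle lies in this last construction. Producing the right-properness-violating square from an arbitrary failure of componentwise behaviour is delicate: one must choose the fibration carefully, ensuring that the base is a suitable target for the given $f$-equivalence and that the pullback can be detected on homotopy function complexes into some $f$-local test space. The template is precisely the Bousfield counterexample recorded in Remark \ref{rem: example of not right proper}, and the necessity argument should proceed by axiomatising that example and showing that it applies whenever $L_f$ fails to be equivalent to a nullification.
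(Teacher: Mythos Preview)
Your sufficiency direction is fine and essentially matches the paper: once the model structure equals a Bousfield--Friedlander nullification, right properness is a direct citation (the paper uses \cite[Theorem 9.9]{Bo01}).

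The necessity direction, however, contains a genuine gap. You write that Bousfield's space $A(f)$ from \cite[Theorem 4.4]{BousfieldHomotopical} ``by design \dots\ ensures that $L_fZ$ and $P_{A(f)}Z$ agree on connected $Z$'', and that only the disconnected case remains. This is not what that theorem gives you. Bousfield's construction guarantees only that the $A(f)$-acyclic spaces coincide with the $f$-acyclic spaces and that every $P_{A(f)}$-equivalence is an $f$-equivalence. The reverse containment---that every $f$-equivalence of connected spaces is a $P_{A(f)}$-equivalence---is exactly what must be \emph{proved}, and it is precisely here that right properness is consumed. Indeed, if agreement on connected spaces were automatic, the $H\bb{Q}$ example of Remark~\ref{rem: example of not right proper} would be impossible, since $K(\bb{Z},1)$ and $K(\bb{Q},1)$ are connected.

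The paper's argument for this step is as follows: given connected $X$, the path fibration $P(L_fX)\to L_fX$ is a fibration in $L_f\T$ (a fibration between fibrant objects), and pulling it back along the $f$-equivalence $X\to L_fX$ yields, by right properness, that the homotopy fibre of $X\to L_fX$ is $f$-acyclic, hence $A(f)$-acyclic. Then \cite[Corollary 4.8(i)]{BousfieldPeriodicity} converts this fibre condition into the statement that $X\to L_fX$ is a $P_{A(f)}$-equivalence. That is where right properness actually enters; your proposal never invokes it for the connected case, and the contrapositive construction you sketch for disconnected spaces---which you yourself flag as the main obstacle---is addressing the wrong difficulty.
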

\begin{proof}
By \cite[Theorem 4.4]{BousfieldHomotopical}, there exists a based space $A(f)$ such that the classes of $A(f)$-acyclic  and $f$-acyclic spaces agree, and every $P_{A(f)}$-equivalence is an $f$-local equivalence.

Assume that the $f$-local model structure is right proper. For a connected based space $X$, the path fibration over $L_fX$ is an $f$-local fibration, hence the homotopy fibre of the map $X \to L_f X$ is $f$-acyclic, and hence $A(f)$-acyclic. It follows by \cite[Corollary 4.8(i)]{BousfieldPeriodicity}, the map $X \to L_f X$ is a $P_{A(f)}$-equivalence, hence every $f$-local equivalences of connected spaces is a  $P_{A(f)}$-equivalence. Since the functor $P_{A(f)}$ on based spaces comes from a functor on unbased spaces which preserves disjoint unions when $A(f)$ is connected and which takes contractible values when $A(f)$ is not connected, every $f$-local equivalence must be a $P_{A(f)}$-equivalence. It follows that the class of $f$-local equivalences agrees with the class of $P_{A(f)}$-equivalences. The equality of the model structures follows immediately since both model structures have the same cofibrations inherited from the Quillen model structure on the category of based spaces.

For the converse, assume that the $f$-local model structure agrees with the $A(f)$-local model structure. The latter model structure is right proper by \cite[Theorem 9.9]{Bo01}, and since both model structures have the same weak equivalences and fibrations, the $f$-local model structure must also be right proper. 
\end{proof}

\begin{rem}
The property of being right proper is completely determined by the weak equivalence class of the model structure; if two model structures have the the same weak equivalences, then one is right proper if and only if the other is, see e.g., \cite[Remark 2.5.6]{BalchinModelCats}. 
\end{rem}

%%%%%%%%%%%%%%%%%%%%%%%%%%%%%%%%%%%%%%%%
		\subsection{Nullifications and polynomial functors}
%%%%%%%%%%%%%%%%%%%%%%%%%%%%%%%%%%%%%%%%
Recall from Proposition \ref{prop: S-local n-poly model structure} that we have minimal control over the $W$-local $n$-polynomial model structure, in particular, unless the localization is well-behaved with respect to sequential homotopy colimits, $T_nL_W$ is not a fibrant replacement functor. We construct a $W$-periodic $n$-polynomial model structure as the Bousfield-Friedlander localization at the composite
\[
T_n \circ P_W \colon {\Fun}(\es{J}_0, \T) \longrightarrow {\Fun}(\es{J}_0, \T).
\]
and show that this model structure is precisely the $W$-local $n$-polynomial model structure.

We begin with a lemma which deals with fibrant objects in the Bousfield-Friedlander localization of orthogonal functors at the endofunctor $P_W$, which we call the $W$-periodic projective model structure. 

\begin{lem}\label{lem: fib in null model}
For a finite cell complex $W$ and an orthogonal functor $F$, the functor $T_nP_WF$ is fibrant in the Bousfield-Friedlander localization of the category of orthogonal functors at the functor $P_W$. In particular, the map
\[
\omega_{T_nP_WF} \colon T_nP_WF \longrightarrow P_WT_nP_WF,
\] 
is a objectwise weak homotopy equivalence. 
\end{lem}
\begin{proof}
The Bousfield-Friedlander localization of based spaces at the endofunctor $P_W$ is identical to the left Bousfield localization of based spaces at the map $\ast \to W$, since both model structures have the same cofibrations and fibrant objects. It follows that the Bousfield-Friedlander localization of the category of orthogonal functors at the endofunctor $P_W$ is identical to the $W$-local projective model structure. In particular, we see that $P_WF$ is fibrant and hence $\tau_n P_WF$ is also fibrant, since the class of $W$-local objects is closed under homotopy limits.  The result follows since local objects for a nullification are closed under sequential homotopy colimits by \cite[1.D.6]{FarjounCellular}.
\end{proof}

\begin{prop}\label{prop: BF localization at composite}
For a finite cell complex $W$ the Bousfield-Friedlander localization of the category of orthogonal functors at the endofunctor
\[
T_n \circ P_W \colon \Fun(\es{J}_0, \T) \longrightarrow \Fun(\es{J}_0, \T),
\]
exists. This model structure is proper and topological. We call this the $W$-periodic $n$-polynomial model structure and denote it by $\poly{n}(\es{J}_0, P_W\T)$. 
\end{prop}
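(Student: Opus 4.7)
The plan is to verify the three Bousfield--Friedlander axioms (A.4), (A.5), (A.6) of \cite[Theorem 9.3]{Bo01} for the endofunctor $Q := T_n \circ P_W$ on $\Fun(\es{J}_0, \T)$, and then to deduce properness and the topological structure from properties of the building blocks. The key input is Lemma \ref{lem: fib in null model}, which allows us to swap $T_n$ past $P_W$ on appropriate inputs. The principal obstacle will be axiom (A.6), since the individual functors satisfy their own forms of it but the composite requires a compatibility argument.

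First I would verify (A.4), that $Q$ preserves weak homotopy equivalences. Here $P_W$ is a fibrant replacement in the (Bousfield--Friedlander) $W$-periodic projective model structure, so by construction preserves levelwise weak equivalences; and $T_n$ is built from the homotopy limit functor $\tau_n$ and a sequential homotopy colimit, hence preserves all levelwise weak equivalences. Composing the two gives (A.4) immediately.

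For (A.5), I need to show that both natural maps $\eta_{QF} \colon QF \to Q^2 F$ and $Q\eta_F \colon QF \to Q^2 F$ are weak equivalences and agree up to homotopy. By Lemma \ref{lem: fib in null model}, the natural map $T_n P_W F \to P_W T_n P_W F$ is a levelwise weak equivalence; applying $T_n$ (and using that $T_n T_n \simeq T_n$ on fibrant $n$-polynomial values, together with idempotency of $P_W$ up to weak equivalence) shows that $T_n P_W F \to T_n P_W T_n P_W F$ is a weak equivalence. A diagram chase using naturality of $\eta^{T_n}$ and $\eta^{P_W}$, together with the lemma, identifies the two natural maps $\eta_{QF}$ and $Q\eta_F$ up to homotopy.

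The hard part will be (A.6), the stability of pullbacks: given a pullback square whose bottom map is a $Q$-equivalence between $Q$-fibrant objects and whose right map is a fibration, the top map must also be a $Q$-equivalence. My strategy is to reduce to the case where all four corners are already $P_W$-local by first applying the $P_W$-local projective model structure (which is right proper since $P_W$-localization is a nullification, via Proposition \ref{prop: local model as a nullification}), and then to invoke the analogous axiom (A.6) for $T_n$ alone, established in \cite[Proposition 6.5]{BO13}. Lemma \ref{lem: fib in null model} is what makes this reduction possible, since on a $P_W$-local input $T_n$ again produces a $P_W$-local output, so the category of $P_W$-local orthogonal functors is closed under $T_n$.

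Finally, for properness I would argue that left properness is inherited from the underlying projective model structure (the $Q$-cofibrations coincide with the projective cofibrations), while right properness follows from the analogous right properness of the $P_W$-periodic and $n$-polynomial model structures combined with the pullback stability just established. The topological structure follows from the fact that left Bousfield and Bousfield--Friedlander localizations of a topological model category are again topological, provided the localizing functor is compatible with the enrichment, which is the case for both $T_n$ and $P_W$.
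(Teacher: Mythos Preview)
Your overall strategy is sound and matches the paper's in broad outline: verify Bousfield's axioms from \cite[Theorem 9.3]{Bo01} for $Q = T_n \circ P_W$. A small point: the axioms there are numbered (A.1)--(A.3), not (A.4)--(A.6). Your treatment of (A.1) is identical to the paper's.

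For (A.2) and (A.3) you take genuinely different routes. For (A.3) the paper argues directly with fibres: since $B$ and $D$ are $P_W$-local, one invokes \cite[Theorem 9.9]{Bo01} to see that the fibre of $k$ is $P_W$-acyclic, whence $T_nP_W(\fib(k)) \simeq \ast$, and concludes that $k$ is a $T_nP_W$-equivalence. Your reduction---replace $C$ by $P_WC$ using right properness of the nullified model structure, observe that the new pullback has all four corners $P_W$-local, then apply axiom (A.3) for $T_n$ alone---is cleaner and more modular, since it isolates exactly where right properness of nullification and the known $n$-polynomial axiom enter.

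For (A.2) there is a wrinkle you should tighten. The paper proves $\eta_{QF}$ and $Q\eta_F$ are weak equivalences \emph{separately}; for the second it unfolds a large diagram and invokes Weiss's ``approximation of order $n$'' machinery \cite[Definition 5.16, Theorem 6.3]{We95}. Your phrasing ``identifies the two natural maps $\eta_{QF}$ and $Q\eta_F$ up to homotopy'' is not what is required and is not obviously true: naturality only gives $\eta_{QF}\circ\eta_F = Q\eta_F\circ\eta_F$, which does not force the two maps to be homotopic. What does work, and is simpler than the paper's argument, is this: factor $\eta_F$ as $F \xrightarrow{\omega_F} P_WF \xrightarrow{\eta^{T_n}} T_nP_WF$, apply $T_nP_W$, and use the naturality square of $\omega$ at $\eta^{T_n}_{P_WF}$ together with Lemma~\ref{lem: fib in null model} (both vertical maps are weak equivalences) to reduce $T_nP_W(\eta^{T_n}_{P_WF})$ to $T_n(\eta^{T_n}_{P_WF})$, which is a weak equivalence by idempotency of $T_n$. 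This avoids the order-$n$ approximation technology entirely, so once you replace ``identifies the two maps up to homotopy'' with this direct argument your proof of (A.2) is both correct and shorter than the paper's.
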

\begin{proof}
We verify the axioms of \cite[Theorem 9.3]{Bo01}. First note that since $P_W$ and $T_n$ both preserve objectwise weak equivalences so does their composite, hence verifying \cite[Theorem 9.3(A1)]{Bo01}.  

The natural transformation from the identity to the composite $T_n \circ P_W$ is given in components as the composite
\[
F \xrightarrow{\omega_F} P_W F \xrightarrow{\eta_{P_WF}} T_nP_W F, 
\]
where $\omega \colon \mathds{1} \to P_W$ and $\eta \colon \mathds{1} \to T_n$, hence at $T_nP_WF$, we obtain the composite
\[
T_nP_WF \xrightarrow{\omega_{T_nP_WF}} P_WT_nP_W F \xrightarrow{\eta_{P_WT_nP_WF}} T_nP_W T_nP_WF. 
\]
Since the domain is fibrant in the $W$-periodic projective model structure the first map in the composite is a objectwise weak equivalence, see Lemma \ref{lem: fib in null model}. The second map is also a weak equivalence. To see this, note that since $T_nP_WF$ is polynomial of degree less than or equal $n$, the functor $P_WT_nP_WF$ is also polynomial of degree less than or equal $n$ by the commutativity of the diagram
\[
\xymatrix{
T_nP_WF \ar[r] \ar[d] & \tau_nT_nP_WF \ar[d] \\
P_WT_nP_WF \ar[r] & \tau_nP_WT_nP_WF
}
\]
and the fact that homotopy limits preserve objectwise weak equivalences. It follows that the natural transformation $\eta \colon T_nP_W F \to T_nP_WT_nP_WF$ is a objectwise weak equivalence, as a composite of two objectwise weak equivalences. 

The map $T_nP_W(\eta) \colon T_nP_W F \to T_nP_WT_nP_WF$ is also a objectwise weak equivalence. To see this, note that there is a commutative diagram, 
\[
\xymatrix@C+3ex@R+3ex{
F \ar[d]_{\omega_F} \ar[r]^{\omega_F} \ar@{}[dr]^{(1)} &  P_WF \ar[r]^{\eta_{P_WF}} \ar[d]^{\omega_{P_WF}} \ar@{}[dr]^{(2)} & T_nP_WF \ar[d]^{\omega_{T_nP_WF}} \\
P_WF\ar[r]^{P_W\omega_F} \ar[d]_{\eta_{P_WF}}  \ar@{}[dr]^{(3)} &  P_WP_WF \ar[r]^{P_W\eta_{P_WF}} \ar[d]^{\eta_{P_WP_WF}} \ar@{}[dr]^{(4)} & P_WT_nP_WF \ar[d]^{\eta_{P_WT_nP_WF}}\\
T_nP_WF \ar[r]_{T_nP_W\omega_F} & T_nP_WP_W F \ar[r]_{T_nP_W\eta_{P_WF}} & T_nP_WT_nP_WF 
}
\]

in which, the required map is given by the lower horizontal composite. Since  $P_W$ is a homotopically idempotent functor, $P_W\omega_F$ is a objectwise weak equivalence. It follows that the bottom horizontal map
\[
T_nP_W\omega_F \colon T_nP_WF \longrightarrow T_nP_WP_WF,
\]
of $(3)$ is a weak equivalence sine $T_n$ preserves weak equivalences.

Moreover, $P_W$ being homotopically  idempotent yields that the vertical map
\[
\omega_{P_WF} \colon P_WF \longrightarrow P_WP_WF
\]
in $(2)$ is a objectwise weak equivalence. The right-hand vertical map in this square is also an equivalence by Lemma \ref{lem: fib in null model}. By \cite[Theorem 6.3]{We95}, the top right hand horizontal map
\[
\eta_{P_WF} \colon P_WF \longrightarrow T_nP_WF,
\]
is an approximation of order $n$ in the sense of \cite[Definition 5.16]{We95}. By commutativity of $(2)$, the lower horizontal map 
\[
P_W\eta_{P_WF} \colon P_WP_WF \longrightarrow P_WT_nP_WF,
\]
is an approximation of order $n$. The proof of \cite[Theorem 6.3]{We95} also demonstrates that the vertical maps in $(4)$ are approximations of order $n$, and since three out of the four maps in the lower right square are approximations of order $n$, so too is the lower right hand horizontal map
\[
T_nP_W\eta_{P_WF} \colon T_nP_WP_WF \longrightarrow T_nP_WT_nP_WF.
\]
An application of \cite[Theorem 5.15]{We95} yields that this map is a objectwise weak equivalence as both source and target are polynomial of degree less than or equal $n$. This concludes the proof that the map
\[
T_nP_W(\eta) \colon T_nP_W F \longrightarrow T_nP_WT_nP_WF,
\]
is a objectwise weak equivalence, and verifying \cite[Theorem 9.3(A2)]{Bo01}.

Finally we verify \cite[Theorem 9.3(A3)]{Bo01}. Let
\[
\xymatrix{
A \ar[r]^k \ar[d]_g & B \ar[d]^f \\
C \ar[r]_{h} & D 
}
\]
be a pullback square with $f$ a objectwise fibration between $W$-local $n$-polynomial functors, and $T_nP_Wh \colon T_nP_W C \to T_nP_W D$ a objectwise weak equivalence. By \cite[Theorem 9.9]{Bo01}, we see that the fibre of $k$ is $P_W$-acyclic, i.e. $P_W(\fib(k))$ is objectwise weakly contractible. Since $T_n$ preserves objectwise weak equivalences, we see that $T_nP_W(\fib(k))$ is objectwise weakly contractible, and hence $k$ is a $T_nP_W$-equivalence. 

The fact that the resulting model structure is topological follows from \cite[Theorem 9.1]{Bo01}.  
\end{proof}

This Bousfield-Friedlander localization results in an identical model structure to the $W$-local $n$-polynomial model structure of Proposition \ref{prop: S-local n-poly model structure}

\begin{prop}\label{prop: n-poly for nullification}
For a finite cell complex $W$ there is an equality of model structures
	\[
	{\poly{n}}(\es{J}_0, L_W\T) = \poly{n}(\es{J}_0, P_W\T), 
	\]
	that is, the $W$-local $n$-polynomial model structure and the $W$-periodic $n$-polynomial model structure agree. In particular, these model structures are cellular, proper and topological. 
\end{prop}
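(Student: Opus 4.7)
The plan is to show that the two model structures $\poly{n}(\es{J}_0, L_W\T)$ and $\poly{n}(\es{J}_0, P_W\T)$ share the same cofibrations and the same weak equivalences; since a model structure on a fixed category is determined by any two of its three distinguished classes, this will yield the equality. The assertions about cellularity, properness and topologicality will then follow by combining the information already recorded in Propositions~\ref{prop: S-local n-poly model structure} and \ref{prop: BF localization at composite}: left properness and cellularity come from the left Bousfield localization description, while right properness and the topological enrichment are supplied by the Bousfield--Friedlander description.

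First I would verify that the cofibrations agree. By Proposition~\ref{prop: S-local n-poly model structure} the cofibrations of $\poly{n}(\es{J}_0, L_W\T)$ are the projective cofibrations of $\Fun(\es{J}_0, \T)$. On the other hand, $\poly{n}(\es{J}_0, P_W\T)$ is constructed as a Bousfield--Friedlander localization of the projective model structure at the endofunctor $T_n \circ P_W$, and such localizations retain the cofibrations of the underlying model category. Thus both model structures have the projective cofibrations.

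The heart of the argument is the identification of the weak equivalences. Since $W$ is a finite cell complex, the class of $W$-local spaces is closed under sequential homotopy colimits by \cite[1.D.6]{FarjounCellular}. Applying the proposition in the preceding subsection characterising the weak equivalences of $\poly{n}(\es{J}_0, L_S\T)$ under this closure hypothesis (with $L_S = L_W = P_W$), a map $f \colon X \to Y$ is a weak equivalence in $\poly{n}(\es{J}_0, L_W\T)$ if and only if $T_nP_Wf \colon T_nP_WX \to T_nP_WY$ is a $W$-local equivalence. By Lemma~\ref{lem: fib in null model} the functor $T_nP_WF$ is levelwise $W$-local for every $F$, so $T_nP_Wf$ is a map between levelwise $W$-local orthogonal functors. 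The $W$-local Whitehead theorem \cite[Theorem 3.2.13(1)]{Hi03}, applied levelwise, then tells us that this map is a $W$-local equivalence precisely when it is a levelwise weak equivalence. But maps with the latter property are by definition the $T_nP_W$-equivalences, i.e., the weak equivalences of the Bousfield--Friedlander localization $\poly{n}(\es{J}_0, P_W\T)$.

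The only genuinely delicate point is this matching of weak equivalences, which hinges on the fact that $T_nP_W$ lands in $W$-local functors; once this has been isolated, the argument reduces to applying the closure of $W$-local objects under sequential homotopy colimits (which fails for general left Bousfield localizations, hence the restriction to nullifications at a finite cell complex) together with the $W$-local Whitehead theorem.
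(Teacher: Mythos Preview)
Your proof is correct but takes a different route from the paper's. The paper compares \emph{fibrant objects}: it unpacks the fibrancy condition in the Bousfield--Friedlander localization (namely that $F \to T_nP_WF$ be a levelwise weak equivalence) and shows directly, via Lemma~\ref{lem: fib in null model} together with a small two-out-of-three argument on the square
\[
\begin{tikzcd}
F \ar[r]\ar[d] & P_WF \ar[d]\\
T_nF \ar[r] & T_nP_WF,
\end{tikzcd}
\]
that this is equivalent to $F$ being $W$-local and $n$-polynomial. You instead compare \emph{weak equivalences}, invoking the earlier proposition identifying the weak equivalences of $\poly{n}(\es{J}_0, L_S\T)$ under the sequential-colimit closure hypothesis, and then using Lemma~\ref{lem: fib in null model} plus the $W$-local Whitehead theorem to match these with the $T_nP_W$-equivalences. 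Both arguments rest on the same key input (closure of $W$-local spaces under sequential homotopy colimits, hence Lemma~\ref{lem: fib in null model}); your approach is shorter once that earlier weak-equivalence characterisation is available, while the paper's is more self-contained in that it does not pass through the external reference \cite[Lemma~5.5]{Ba17} on which that characterisation depends.
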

\begin{proof}
Both model structures have the same cofibrations, namely the projective cofibrations. It suffices to show that they share the same fibrant objects. Working through the definition of a fibrant object in the Bousfield-Friedlander localization we see that an orthogonal functor $F$ is fibrant if and only if the canonical map $F \to T_nP_WF$ is a objectwise weak equivalence. It follows that $F$ must be $W$-local and $n$-polynomial, hence fibrant in the $W$-local $n$-polynomial model structure. Conversely, if $F$ is fibrant in the $W$-local $n$-polynomial model structure, then the map $F \to P_WF$ is a objectwise weak equivalence and there is a commutative diagram
\[
\xymatrix{
F \ar[r] \ar[d] & P_W F \ar[d] \\
T_nF \ar[r] & T_nP_W F 
}
\]
in which three out of the four arrows are objectwise weak equivalences, hence so to is the right-hand vertical arrow. It follows that $F$ is fibrant in the Bousfield-Friedlander localization.	
\end{proof}

\begin{rem}
The nullification condition here is necessary. The above lemma does not hold in general. To see this, consider the (smashing) localization at the spectrum $E=H\bb{Q}$. The $H\bb{Q}$-local model structure is not right proper, (see Remark \ref{rem: example of not right proper}) yet if this were expressible as a Bousfield-Friedlander localization it would necessarily be right proper, \cite[Theorem 9.3]{Bo01}. 
\end{rem}

\begin{cor}\label{cor: E-local fibs by T_n}
For a finite cell complex $W$ a map $f \colon X \to Y$ is a fibration in the $W$-local $n$-polynomial model structure if and only if $f$ is a fibration in the projective model structure and the square
\[
\xymatrix{
X \ar[r] \ar[d] & T_nP_WX \ar[d] \\
Y \ar[r] & T_nP_WY 
}
\]
is a homotopy pullback square in the projective model structure on $\Fun(\es{J}_0, \T)$.
\end{cor}
%\begin{proof}
%By Proposition \ref{prop: local model as a nullification}, the $E$-local model structure on spaces is right proper, hence the $E$-local $n$-polynomial model structure is right proper by Lemma \ref{lem: E-local n-poly right proper}. 
%
%Let $f \colon X \to Y$ be a fibration in the $E$-local $n$-polynomial model structure, i.e., a fibration in the $\es{S}_n \cup J_E$-local model structure on $\Fun(\es{J}_0, \T)$. It follows that $f$ is a objectwise fibration in the projective model structure on $\Fun(\es{J}_0, \T)$. An application of \cite[Proposition 3.4.8(1)]{Hi03} , using that both the projective model structure and the $\es{S}_n \cup J_E$-local model structure are right proper, yields that the required square is a homotopy pullback in the projective model structure on $\Fun(\es{J}_0, \T)$.
%
%Conversely, suppose $f \colon X \to Y$ is a fibration in projective model structure and the square of interest is a homotopy pullback in the projective model structure. Another application of  \cite[Proposition 3.4.8(1)]{Hi03} yields the required result, since the map $X \to T_nL_EX$ is a $\es{S}_n \cup J_E$-localization of $X$ provided the localization $L_E$ is smashing. 
%\end{proof}

\begin{rem}
It is highly unlikely that this result holds in more general localizations than nullifications. Let $\es{C}$ be a model category and $S$ a set of maps in $\es{C}$ such that the left Bousfield localization of $\es{C}$ at $S$ exists. By \cite[Proposition 3.4.8(1)]{Hi03}  right properness of $\es{C}$ and $L_{S}\es{C}$ is sufficient for a map $f \colon X \to Y$ being a fibration in $L_{S}\es{C}$ if and only if $f$ is a fibration in $\es{C}$ and the square
\[
\xymatrix{
X \ar[r]^{j_X} \ar[d]_{f} & \hat{X} \ar[d]^{\hat{f}} \\
Y \ar[r]_{j_Y}&  \hat{Y}
}
\]
is a homotopy pullback square, where $\hat{f} \colon \hat{X} \to \hat{Y}$ is a $S$-localization of $f$ in the sense of \cite[Definition 3.2.16]{Hi03}. In our situation, Proposition \ref{prop: local model as a nullification} guarantees that a homological localization is right proper if and only if it is a nullification. However, it is not clear in general if right properness of the base model category and the localized model category is a necessary condition for the above description of the fibrations in $L_{S}\es{C}$. 
\end{rem}

%%%%%%%%%%%%%%%%%%%%%%%%%%%%%%%%%%%%%%%%
		\subsection{Nullifications and homogeneous functors}
%%%%%%%%%%%%%%%%%%%%%%%%%%%%%%%%%%%%%%%%

In the case of a nullification, the $W$-local $n$-homogeneous model structure of Proposition \ref{prop: E local n-homog model} is not the only way of constructing a model structure with the correct homotopy category. Since the $W$-local model structure on based spaces is right proper, so too is the $W$-local $n$-polynomial model structure and hence we can also follow the more standard procedure and preform a right Bousfield localization at the set 
 \[
\es{K}_n' =  \{ \es{J}_n(U, -) \mid U \in \es{J}\},
 \]
to obtain a local $n$-homogeneous model category structure. 

\begin{prop}\label{prop: W-local n-homog as RBL}
For a finite cell complex $W$ there exists a model structure on the category of orthogonal functors with weak equivalences those maps $X \to Y$ such that 
\[
(T_nP_WX)^{(n)} \longrightarrow (T_nP_W X)^{(n)},
\]
is a objectwise weak equivalence and with fibrations the fibrations of the $W$-local $n$-polynomial model structure. This model structure cellular, proper, stable and topological. We call this the $W$-periodic $n$-homogeneous model structure and denote it $\homog{n}(\es{J}_0, P_W\T)$. 
\end{prop}
\begin{proof}
This is the right Bousfield localization of the $W$-local $n$-polynomial model structure. The proof of which follows exactly as in \cite[Proposition 6.9]{BO13}. Note that this right Bousfield localization exists since the $W$-local $n$-polynomial model structure in right proper and cellular when the localization is a nullification, see Proposition \ref{prop: n-poly for nullification}. 
\end{proof}

This right Bousfield localization behaves like a left Bousfield localization of the $n$-homogeneous model structure in the following sense.

\begin{lem}
For a finite cell complex $W$ the adjoint pair
\[
\adjunction{ \mathds{1}}{\homog{n}(\es{J}_0,\T)}{\homog{n}(\es{J}_0, P_W\T)}{ \mathds{1}},
\]
is a Quillen adjunction.
\end{lem}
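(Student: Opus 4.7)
The plan is to verify that the identity adjunction is a Quillen adjunction by showing the right adjoint $\mathds{1}\colon \homog{n}(\es{J}_0, P_W\T) \to \homog{n}(\es{J}_0,\T)$ preserves both fibrations and acyclic fibrations, which is one of the equivalent conditions for a Quillen adjunction.

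Preservation of fibrations is a direct chase through the defining model structures. A fibration in $\homog{n}(\es{J}_0, P_W\T)$ is by Proposition \ref{prop: W-local n-homog as RBL} a fibration in $\poly{n}(\es{J}_0, P_W\T)$. Combining Propositions \ref{prop: n-poly for nullification} and \ref{lem: QA for polynomials}, $\poly{n}(\es{J}_0, P_W\T)$ is a left Bousfield localization of $\poly{n}(\es{J}_0, \T)$, so its fibrations are contained in the fibrations of $\poly{n}(\es{J}_0, \T)$, which by the construction of the $n$-homogeneous model structure as a right Bousfield localization of $\poly{n}(\es{J}_0, \T)$ coincide with the fibrations of $\homog{n}(\es{J}_0, \T)$.

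The bulk of the work lies in preservation of acyclic fibrations. Suppose $f\colon X \to Y$ is an acyclic fibration in $\homog{n}(\es{J}_0, P_W\T)$. Then $f$ is a fibration in $\poly{n}(\es{J}_0, P_W\T)$ and the induced map $(T_n P_W X)^{(n)} \to (T_n P_W Y)^{(n)}$ is a levelwise weak equivalence. By Corollary \ref{cor: E-local fibs by T_n}, the square
\[
\begin{tikzcd}
X \ar[r] \ar[d, "f"'] & T_n P_W X \ar[d] \\
Y \ar[r] & T_n P_W Y
\end{tikzcd}
\]
is a homotopy pullback in the projective model structure. Since $T_n$ is a sequential homotopy colimit of finite homotopy limits it preserves homotopy pullbacks, and since $T_n P_W X$ and $T_n P_W Y$ are already $n$-polynomial the right-hand column is unchanged up to weak equivalence. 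The $n$-th derivative, being a (derived) right adjoint via $\ind_0^n \sf{CI}$, preserves homotopy pullbacks as well, so one obtains a homotopy pullback
\[
\begin{tikzcd}
(T_n X)^{(n)} \ar[r] \ar[d] & (T_n P_W X)^{(n)} \ar[d] \\
(T_n Y)^{(n)} \ar[r] & (T_n P_W Y)^{(n)}.
\end{tikzcd}
\]
Since parallel arrows in a homotopy pullback are either both weak equivalences or both not, and the right vertical is a levelwise weak equivalence by hypothesis, so is the left vertical. Hence $f$ is a $D_n$-equivalence, and combined with the previous paragraph this makes $f$ an acyclic fibration in $\homog{n}(\es{J}_0, \T)$.

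The main obstacle is ensuring that the relevant homotopy pullbacks are genuinely preserved at each stage; this rests on the exactness of $T_n$ (commutation of finite homotopy limits with sequential homotopy colimits in $\T$) and the fact that the $n$-th derivative is built from right adjoints, both of which are standard features of the orthogonal calculus machinery.
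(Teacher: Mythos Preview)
Your proof is correct, though the blanket claim that ``parallel arrows in a homotopy pullback are either both weak equivalences or both not'' is false in general; only the direction you actually use (right vertical a weak equivalence implies left vertical a weak equivalence) holds. With that caveat your argument goes through.

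The paper takes a different and considerably shorter route, working on the left adjoint rather than the right. It exploits the fact that both $\homog{n}(\es{J}_0,\T)$ and $\homog{n}(\es{J}_0,P_W\T)$ are right Bousfield localizations of the respective $n$-polynomial model structures, so their acyclic cofibrations coincide with those of the underlying polynomial model structures; Lemma~\ref{lem: QA for polynomials} then immediately gives preservation of acyclic cofibrations. For cofibrations the paper invokes \cite[Proposition 3.3.16(2)]{Hi03} to say that cofibrations between cofibrant objects in a right Bousfield localization are cofibrations of the underlying model structure, and again applies Lemma~\ref{lem: QA for polynomials}, finishing with Dugger's criterion \cite[Corollary A.2]{DuggerReplacement}. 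This approach avoids any analysis of derivatives or homotopy pullbacks and is essentially formal once one remembers how (acyclic) cofibrations behave under right Bousfield localization. Your approach, by contrast, gives a more hands-on verification that spells out why a $W$-periodic $D_n$-equivalence is in particular an ordinary $D_n$-equivalence, which is informative but requires the exactness properties of $T_n$ and $\ind_0^n$ that the paper's argument sidesteps entirely.
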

\begin{proof}
Since the acyclic cofibrations of the $n$-homogeneous model structure are precisely the acyclic cofibrations of the $n$-polynomial model structure and similarly, the acyclic cofibrations of $W$-periodic $n$-homogeneous model structure are precisely the acyclic cofibrations of the $W$-local $n$-polynomial model structure, the identity functor preserves acyclic cofibrations by Lemma \ref{lem: QA for polynomials}. 

On the other hand, by \cite[Proposition 3.3.16(2)]{Hi03}, cofibrations between cofibrant objects in a right Bousfield localization are cofibrations in the underlying model structure, hence Lemma \ref{lem: QA for polynomials} shows that the identity functor preserves cofibrations between cofibrant objects. The result follows by \cite[Corollary A.2]{DuggerReplacement}.
\end{proof}

An analogous Quillen equivalence is obtained between the $W$-local intermediate category and the $W$-periodic $n$-homogeneous model structure of Proposition \ref{prop: W-local n-homog as RBL} which recall is obtained as a right Bousfield localization of the $W$-local $n$-polynomial model structure. The proof is all but identical to \cite[Theorem 10.1]{BO13}. 

\begin{thm}\label{thm: W-local diff as Quillen}
For a finite cell complex $W$ the adjoint pair
\[
\adjunction{\res_0^n/O(n)}{L_W{\Fun}_{O(n)}(\es{J}_n, O(n)\T)}{\homog{n}(\es{J}_0, P_W\T)}{{\ind}_0^n\varepsilon^*},
\]
is a Quillen equivalence.  
\end{thm}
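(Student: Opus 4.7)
The strategy mirrors the proof of Theorem \ref{thm: E-local diff as Quillen}, but accommodates the fact that $\homog{n}(\es{J}_0, P_W\T)$ is constructed as a right Bousfield localization rather than a left one (Proposition \ref{prop: W-local n-homog as RBL}). First, I would verify that the adjunction is Quillen by checking that $\ind_0^n\varepsilon^*$ preserves fibrant objects and acyclic fibrations. Since fibrations in the right Bousfield localization coincide with those of the underlying $W$-local $n$-polynomial model structure, and $\poly{n}(\es{J}_0, L_W\T) = \poly{n}(\es{J}_0, P_W\T)$ by Proposition \ref{prop: n-poly for nullification}, the fibrant objects of $\homog{n}(\es{J}_0, P_W\T)$ are the $W$-locally $n$-polynomial functors. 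For a fibrant object $F$ of $L_W\Fun_{O(n)}(\es{J}_n, O(n)\T)$, i.e., a levelwise $W$-local $n\Omega$-spectrum, the output $\ind_0^n\varepsilon^* F$ is $n$-polynomial by the non-localized Quillen adjunction of \cite[Theorem 10.1]{BO13}, and is levelwise $W$-local because $\ind_0^n\varepsilon^* F(V) = \nat_n(\es{J}_n(V,-), \varepsilon^* F)$ is a homotopy-limit-type construction and $W$-local spaces are closed under homotopy limits.

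For the Quillen equivalence, I would verify that for cofibrant $X$ in $L_W\Fun_{O(n)}(\es{J}_n, O(n)\T)$ and fibrant $Y$ in $\homog{n}(\es{J}_0, P_W\T)$, the map $X \to \ind_0^n\varepsilon^* Y$ is a $W$-local $n\pi_*$-isomorphism if and only if $\res_0^n/O(n)X \to Y$ is a weak equivalence in $\homog{n}(\es{J}_0, P_W\T)$. The non-localized statement (with ordinary $n\pi_*$-isomorphisms and $D_n$-equivalences) follows from \cite[Theorem 10.1]{BO13}. The additional $W$-local content on both sides must be matched: on the intermediate side through the $W$-local fibrant replacement in the $n$-stable model structure, and on the orthogonal functor side through Proposition \ref{prop: W-local n-homog as RBL} combined with Lemma \ref{lem: derivative E-local}, which ensures that $n$-th derivatives of $W$-local functors are $W$-local.

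The main obstacle will be the precise alignment of the two notions of weak equivalence: confirming that the $\es{K}_n'$-colocal equivalences between fibrant objects in $\homog{n}(\es{J}_0, P_W\T)$ are detected by $W$-local $n\pi_*$-isomorphisms on $n$-th derivatives. A potentially cleaner alternative route is to invoke an analogue of \cite[Theorem 3.3.20]{Hi03} for right Bousfield localizations of Quillen equivalences: starting from the underlying Quillen equivalence of \cite[Theorem 10.1]{BO13} and verifying that $\res_0^n/O(n)$ identifies the localizing set $\es{Q}_n$ of Proposition \ref{prop: E-local intermediate} with a colocalizing set whose right Bousfield localization recovers $\homog{n}(\es{J}_0, P_W\T)$. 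Either way, the zigzag Quillen equivalence with ${\s}(L_W\T)[O(n)]$ then follows by composition with Theorem \ref{thm: E-local spectra and intermediate}, and a Quillen equivalence between $\homog{n}(\es{J}_0, L_W\T)$ and $\homog{n}(\es{J}_0, P_W\T)$ via the identity (the content of the forthcoming Corollary \ref{cor: QE for W-local n-homog}) follows by two-out-of-three.
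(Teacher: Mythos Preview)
There is a directional confusion in your first paragraph: you propose to check that $\ind_0^n\varepsilon^*$ preserves fibrant objects by applying it to a fibrant $F$ in $L_W\Fun_{O(n)}(\es{J}_n, O(n)\T)$, but $\ind_0^n\varepsilon^*$ is the right adjoint going \emph{from} $\homog{n}(\es{J}_0, P_W\T)$ \emph{to} the intermediate category --- it is the $n$-th derivative functor, see Definition~\ref{def: derivative}. What one actually needs is that for a $W$-locally $n$-polynomial functor $F$, the derivative $\ind_0^n\varepsilon^*F$ is a levelwise $W$-local $n\Omega$-spectrum; this follows from Lemma~\ref{lem: derivative E-local} together with \cite[Corollary 5.12]{We95}, and the paper invokes exactly these. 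Once the direction is corrected your outline of the Quillen adjunction is workable, but note that ``preserves fibrant objects and acyclic fibrations'' is not by itself a complete right-Quillen criterion; you still need preservation of fibrations.

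The paper establishes the Quillen adjunction by a stepwise application of Hirschhorn's machinery: starting from the Quillen adjunction between the $n$-stable and $n$-polynomial model structures \cite[Lemma 9.2]{BO13}, it first passes to the $W$-local $n$-polynomial target, then localizes the source via \cite[Theorem 3.1.6(1) and Proposition 3.3.18(1)]{Hi03}, and finally descends to the right Bousfield localization $\homog{n}(\es{J}_0, P_W\T)$ using \cite[Theorem 3.1.6(2) and Proposition 3.3.18(2)]{Hi03}. This is essentially the ``analogue of \cite[Theorem 3.3.20]{Hi03} for right Bousfield localizations'' that you gesture at in your alternative, made precise.

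For the Quillen equivalence your sketch is too thin. The paper does not verify the adjoint-map criterion directly; it shows that $\ind_0^n\varepsilon^*$ reflects weak equivalences between fibrant objects and that the derived unit is an isomorphism, then appeals to \cite[Corollary 1.3.16]{Ho99}. The derived-unit step is the substantive one: for cofibrant $F$ in the $W$-local $n$-stable model structure the paper compares $F$ to $\ind_0^n\varepsilon^* T_nP_W(\res_0^n F/O(n))$ via a commutative square involving the cofibrant replacement of $(\alpha_n)^*\Psi$, where $\Psi$ is a fibrant replacement of $(\alpha_n)_!F$ in $\s(L_W\T)[O(n)]$. The argument then reduces, exactly as in \cite[Theorem 10.1]{BO13}, to the observation that any $T_n$-equivalence is a $T_nP_W$-equivalence. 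Your proposal does not isolate this reduction, and without it the ``precise alignment of the two notions of weak equivalence'' you correctly flag as the main obstacle is not actually resolved.
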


 Proposition \ref{prop: E local n-homog model} and Proposition \ref{prop: W-local n-homog as RBL} provide two different model structures which both capture the homotopy theory of $W$-locally $n$-homogeneous functors. However, these model structures are not identical. For instance, the $W$-local model structure of Proposition \ref{prop: E local n-homog model} has fibrant objects the $n$-polynomial functors which have $W$-local $n$-th derivative, whereas the fibrant objects of the $W$-periodic $n$-homogeneous model structure (Proposition \ref{prop: W-local n-homog as RBL}) are the $W$-local $n$-polynomial functors. However, they are Quillen equivalent via the identity functor.

\begin{cor}\label{cor: QE for W-local n-homog}
For a finite cell complex $W$ the adjoint pair
\[
\adjunction{\mathds{1}}{\homog{n}(\es{J}_0, L_W\T)}{\homog{n}(\es{J}_0, P_W\T)}{ \mathds{1}},
\]
is a Quillen equivalence.
\end{cor}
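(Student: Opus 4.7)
The plan is to show the identity adjunction is a Quillen adjunction and then conclude the Quillen equivalence via the two-out-of-three property for Quillen equivalences, using Theorem \ref{thm: E-local diff as Quillen} and Theorem \ref{thm: W-local diff as Quillen}. The key feature to exploit is that both of those Quillen equivalences involve the \emph{same} adjoint pair $(\res_0^n/O(n), \ind_0^n\varepsilon^*)$, differing only in the model structure placed on orthogonal functors.

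First I would establish that $\mathds{1} \colon \homog{n}(\es{J}_0, L_W\T) \to \homog{n}(\es{J}_0, P_W\T)$ is left Quillen. By Proposition \ref{prop: W-local n-homog as RBL}, the fibrant objects of $\homog{n}(\es{J}_0, P_W\T)$ are the $W$-locally $n$-polynomial functors; by Lemma \ref{lem: derivative E-local} their $n$-th derivatives are $W$-local, so by Proposition \ref{prop: E local n-homog model} they are also fibrant in $\homog{n}(\es{J}_0, L_W\T)$. Combined with the coincidence of cofibrant objects in both model structures (the projectively cofibrant $n$-reduced orthogonal functors) and an explicit check on generating (acyclic) cofibrations, this yields the Quillen adjunction property of the identity.

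Once the identity is known to be a Quillen adjunction, the left Quillen functor $\res_0^n/O(n) \colon L_W\Fun_{O(n)}(\es{J}_n, O(n)\T) \to \homog{n}(\es{J}_0, P_W\T)$ from Theorem \ref{thm: W-local diff as Quillen} factors on the nose as
\[
L_W\Fun_{O(n)}(\es{J}_n, O(n)\T) \xrightarrow{\res_0^n/O(n)} \homog{n}(\es{J}_0, L_W\T) \xrightarrow{\mathds{1}} \homog{n}(\es{J}_0, P_W\T),
\]
in which the first factor is a Quillen equivalence by Theorem \ref{thm: E-local diff as Quillen} and the composite is a Quillen equivalence by Theorem \ref{thm: W-local diff as Quillen}. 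The two-out-of-three property for Quillen equivalences then forces $\mathds{1}$ to be a Quillen equivalence.

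The main obstacle is the first step: verifying the Quillen adjunction. The difficulty is that the two model structures are assembled in different orders: $\homog{n}(\es{J}_0, L_W\T)$ is a left Bousfield localization at $\es{K}_n$ of the right Bousfield localization $\homog{n}(\es{J}_0, \T)$, while $\homog{n}(\es{J}_0, P_W\T)$ is a right Bousfield localization at $\es{K}_n'$ of the left Bousfield localization $\poly{n}(\es{J}_0, L_W\T)$, and such iterated Bousfield localizations need not be comparable a priori. The technical inputs that make the comparison go through are Lemma \ref{lem: derivative E-local}, which controls how the $n$-th derivative interacts with $W$-locality, together with the identification $\poly{n}(\es{J}_0, L_W\T) = \poly{n}(\es{J}_0, P_W\T)$ from Proposition \ref{prop: n-poly for nullification}, which ensures the underlying $n$-polynomial model structures used in both constructions actually agree.
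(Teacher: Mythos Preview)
Your proposal is correct and follows essentially the same route as the paper: establish the identity as a Quillen adjunction using Lemma~\ref{lem: derivative E-local} to show the right adjoint preserves fibrant objects, then apply two-out-of-three with the Quillen equivalences of Theorem~\ref{thm: E-local diff as Quillen} and Theorem~\ref{thm: W-local diff as Quillen}, exploiting that both involve the same underlying adjoint pair $(\res_0^n/O(n),\ind_0^n\varepsilon^*)$. The only minor deviation is in the Quillen adjunction step: rather than asserting that the cofibrant objects of the two model structures coincide (which is not established in the paper and is not obviously true, since $\homog{n}(\es{J}_0, P_W\T)$ is a right Bousfield localization of a \emph{different} underlying model structure), the paper argues more cautiously that cofibrations between cofibrant objects in $\homog{n}(\es{J}_0, L_W\T)$ are in particular projective cofibrations, hence cofibrations between cofibrant objects in $\homog{n}(\es{J}_0, P_W\T)$, and then invokes Dugger's criterion.
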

\begin{proof}
Since cofibrations between cofibrant objects in $\homog{n}(\es{J}_0, L_W\T)$ are projective cofibrations which are $T_{n}$-equivalences, and the cofibrations between cofibrant objects of $\homog{n}(\es{J}_0, P_W\T)$ are the projective cofibrations, it follows that the identity functor 
\[
 \mathds{1} \colon \homog{n}(\es{J}_0, L_W\T) \longrightarrow \homog{n}(\es{J}_0, P_W\T), 
\]
necessarily preserves cofibrations between cofibrant objects. On the other hand, the identity functor
\[
 \mathds{1} \colon  \homog{n}(\es{J}_0, P_W\T)\longrightarrow \homog{n}(\es{J}_0, L_W\T),
\]
preserves fibrant objects since if $X$ is objectwise $W$-local, $\ind_0^nX$ is objectwise $W$-local, by Lemma \ref{lem: derivative E-local}. It follows that the adjunction is a Quillen adjunction. 
To see that the adjunction is a Quillen equivalence, there is a commutative square 	
\[
\xymatrix@C+2ex@R+1ex{
L_W{\Fun}_{O(n)}(\es{J}_n, O(n)\T) \ar@<1ex>[r]^{\res_0^n/O(n)} \ar@<-1ex>[d]_{\mathds{1}} & \homog{n}(\es{J}_0, L_W\T) \ar@<1ex>[l]^{\ind_0^n\varepsilon^\ast} \ar@<-1ex>[d]_{\mathds{1}} \\
L_W{\Fun}_{O(n)}(\es{J}_n, O(n)\T) \ar@<1ex>[r]^{\res_0^n/O(n)} \ar@<-1ex>[u]_{\mathds{1}} & \homog{n}(\es{J}_0, P_W\T) \ar@<1ex>[l]^{\ind_0^n\varepsilon^\ast}  \ar@<-1ex>[u]_{\mathds{1}}
}
\]
of Quillen adjunctions, in which three-out-of-four are Quillen equivalences by Theorem \ref{thm: E-local diff as Quillen} and Theorem \ref{thm: W-local diff as Quillen}. Hence the remaining Quillen adjunction must also be a Quillen equivalence. 
\end{proof}

It follows that there is a zigzag of Quillen equivalences
\[
\homog{n}(\es{J}_0, P_W\T) \simeq_{Q} \s(L_W\T)[O(n)],
\]
whenever both model structures exist.

%%%%%%%%%%%%%%%%%%%%%%%%%%%%%%%%%%%%%%%%
	\section{Postnikov sections}
%%%%%%%%%%%%%%%%%%%%%%%%%%%%%%%%%%%%%%%%

Given a based space $A$, the $k$-th Postnikov section of $A$ is the nullification of $A$ at $S^{k+1}$, i.e., $P_{k}A = P_{S^{k+1}}A$.  Given a diagram of (simplicial, left proper, combinatorial) model categories,  Barwick \cite[Section 5 Application 1]{BarwickLeftRight} and Bergner \cite{BergnerHomotopyLimits} develop a general machinery for producing a model structure which captures the homotopy theory of the homotopy limit of the diagram of model categories. Guti{\'e}rrez and Roitzheim \cite[Section 4]{GR16} applied this to the study of Postnikov sections for model categories, which recovers the classical theory when $\es{C}$ is the Kan-Quillen model structure on simplicial sets. We consider the relationship between Postnikov sections and orthogonal calculus via our local calculus. 

%%%%%%%%%%%%%%%%%%%%%%%%%%%%%%%%%%%%%%%%%%%%%%%
\subsection{A combinatorial model for calculus}
%%%%%%%%%%%%%%%%%%%%%%%%%%%%%%%%%%%%%%%%%%%%%%%

The current theory of homotopy limits of model categories requires that the model categories in question be combinatorial, i.e., locally presentable and cofibrantly generated. Since the category of based compactly generated weak Hausdorff spaces is not locally presentable the Quillen model structure is not combinatorial and hence none of our model categories for orthogonal functors are either. We invite the reader to take for granted that all of our cellular model categories may be replaced by combinatorial model categories by starting with a combinatorial model for the Quillen model structure on based spaces, and hence skip directly to Subsection \ref{ssection: k-types of input}. 

We give the details of these combinatorial replacements here. We replace compactly generated weak Hausdorff spaces with \emph{$\Delta$-generated spaces}; a particular full subcategory of the category of topological spaces, which were developed by Vogt \cite{VogtConvenientHomotopy} and unpublished work of Smith, and are surveyed by Dugger in \cite{DuggerDeltaGeneratedSpaces}. The category of $\Delta$-generated spaces may be equipped with a model structure analogous to the Quillen model structure on compactly generated weak Hausdorff spaces with weak equivalences the weak homotopy equivalences and fibrations the Serre fibrations. This model structure is combinatorial, proper and topological. The existence of the model structure follows from \cite[Subsection 1.9]{DuggerDeltaGeneratedSpaces}. The locally presentable (and hence combinatorial) property follows from \cite[Corollary 3.7]{FRDeltaGeneratedSpaces}. The Quillen equivalence may be extracted from \cite[Subsection 1.9]{DuggerDeltaGeneratedSpaces}.

This combinatorial model for spaces transfers to categories of functors and we obtain a projective model structure on the category of orthogonal functors which is Quillen equivalent to our original projective model structure but is now combinatorial. A left or right Bousfield localization of a combinatorial model category is again combinatorial, hence the $n$-polynomial, $n$-homogeneous and local versions of these model categories are all combinatorial when we begin with the combinatorial model for the projective model structure on orthogonal functors.

\begin{hypothesis}\label{hyp: combinatorial}
	For the remainder of this section, we will assume that all our model structures are combinatorial, since they are all Quillen equivalent to combinatorial model categories using the combinatorial model for based spaces.
\end{hypothesis}

%%%%%%%%%%%%%%%%%%%%%%%%%%%%%%%%%%%%%%%%%%%%%%%
		\subsection{The model structure of $k$-types in orthogonal functors}\label{ssection: k-types of input}
%%%%%%%%%%%%%%%%%%%%%%%%%%%%%%%%%%%%%%%%%%%%%%%

Denote by $I$ the set of generating cofibrations of the projective model structure of orthogonal functors, and denote by $W_k$ the set of maps of the form
\[
B \wedge S^{k+1} \coprod_{A \wedge S^{k+1}} A \wedge D^{k+2} \longrightarrow B \wedge D^{k+2},
\]
where $A \to B$ is a map in $I$. The model category of $k$-types in $\Fun(\es{J}_0, \T)$ is the left Bousfield localization of the projective model structure at $I \Box \{S^{k+1} \to D^{k+2}\}$ used by Guti\'{e}rrez and Roitzheim \cite{GR16} to model Postnikov sections.

\begin{prop}\label{prop: GR k-types in orthogonal functors}
Let $k \geq 0$. Under Hypothesis \ref{hyp: combinatorial}, the model structure of $k$-types in the category of orthogonal functors is identical to the $S^{k+1}$-local model structure, that is, there is an equality of model structures,
\[
P_k{\Fun}(\es{J}_0, \T) := L_{W_k}{\Fun}(\es{J}_0, \T) = {\Fun}(\es{J}_0, L_{S^{k+1}}\T). 
\]	
\end{prop}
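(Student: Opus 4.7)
The plan is to verify that the two model structures share the same cofibrations and the same fibrant objects; since both are left Bousfield localizations of the projective model structure on $\Fun(\es{J}_0,\T)$ (the right-hand side via Proposition \ref{prop: S-local proj model structure}), this is enough to conclude they are equal. The equality of cofibrations is automatic from both being left Bousfield localizations of the same base model structure, so only the fibrant objects need attention.

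First I would identify the fibrant objects of $\Fun(\es{J}_0,L_{S^{k+1}}\T)$ as the projectively fibrant orthogonal functors $F$ with each $F(V)$ an $S^{k+1}$-local based space. Since $\Map_\ast(S^{k+1},F(V))\simeq \Omega^{k+1}F(V)$, these are precisely the projectively fibrant $F$ taking values in $k$-types. On the other side, the fibrant objects of $L_{W_k}\Fun(\es{J}_0,\T)$ are the projectively fibrant $F$ that are $W_k$-local.

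To translate $W_k$-locality into a levelwise condition, I would use the free-evaluation adjunction $\es{J}_0(V,-)\wedge(-)\dashv \Ev_V$. Since $\es{J}_0(V,-)\wedge(-)$ commutes with pushouts and smash products, the pushout product of a generating projective cofibration $\es{J}_0(V,-)\wedge i_n$ (with $i_n\colon S^{n-1}_+\to D^n_+$) with $j\colon S^{k+1}\to D^{k+2}$ is isomorphic to $\es{J}_0(V,-)\wedge(i_n\Box j)$. Adjunction then gives $\map(\es{J}_0(V,-)\wedge(i_n\Box j),F)\cong \map(i_n\Box j,F(V))$, so that $F$ is $W_k$-local precisely when each $F(V)$ is local with respect to $I_\T\Box\{j\}$, where $I_\T$ denotes the generating cofibrations of $\T$.

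The final and main step is to show that $I_\T\Box\{j\}$-locality coincides with $\{j\}$-locality for Quillen-fibrant spaces. The easy direction is witnessed by $n=0$: the identifications $\ast\wedge X=\ast$ and $S^0\wedge X=X$ give $i_0\Box j=j$ on the nose. For the converse, if $F(V)$ is a $k$-type then in the $P_k$-model structure $j$ is an acyclic cofibration between cofibrant objects, so by the pushout product axiom each $i_n\Box j$ is too, and hence is detected as a weak equivalence by $\map(-,F(V))$. I expect the main technical obstacle to be this last equivalence between pushout-product locality and raw $\{j\}$-locality; it is a standard feature of topological left Bousfield localization (cf.\ \cite[Theorem 4.1.1]{Hi03}), but requires careful bookkeeping of the enriched structure. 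With this in hand, both model structures admit the same fibrant objects, completing the proof.
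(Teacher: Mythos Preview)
Your proposal is correct and follows essentially the same approach as the paper: both arguments reduce to matching fibrant objects in two left Bousfield localizations of the projective model structure, and both use the free--evaluation adjunction $\es{J}_0(V,-)\wedge(-)\dashv \Ev_V$ to translate $W_k$-locality into a levelwise condition. The only difference is in the final identification: the paper computes the pushout products $i_n\Box j$ explicitly (obtaining maps of the form $\es{J}_0(U,-)\wedge S^{n+k+1}_+\to \es{J}_0(U,-)\wedge D^{n+k+2}_+$) and reads off the homotopy-group condition $\pi_iZ(U)=0$ for $i\geq k+1$ directly, whereas you argue more abstractly via the topological pushout-product axiom in $L_{S^{k+1}}\T$ together with the observation $i_0\Box j=j$. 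Both routes are short and equally valid; yours has the mild advantage of not requiring the explicit identification of the pushout products, while the paper's makes the vanishing of higher homotopy groups visible at a glance.
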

\begin{proof}
	It suffices to show that both model structures have the same fibrant objects since the cofibrations in both model structures are identical. To see this, note that by examining the pushout product we can rewrite the set $W_k$ as 
	\[
	W_k = \{ \es{J}_0(U,-) \wedge S^{n+k+1}_+ \longrightarrow \es{J}_0(U,-) \wedge D^{n+k+2}_+ \mid n \geq 0, U \in \es{J}_0\}.
	\]
	It follows by an adjunction argument that an orthogonal functor $Z$ is $W_k$-local if and only if $\pi_i Z(U)$ is trivial for all $i \geq k+1$ and all $U \in \es{J}_0$. This last condition is equivalent to being objectwise $S^{k+1}$-local. 
\end{proof}

%%%%%%%%%%%%%%%%%%%%%%%%%%%%%%%%%%%%%%%%%%%%%%%
		\subsection{The model structure of $k$-types in spectra}
%%%%%%%%%%%%%%%%%%%%%%%%%%%%%%%%%%%%%%%%%%%%%%%

Taking $I_{\s}$ to be the set of generating cofibrations of the stable model structure on $\s$ and denoting again by $W_k$ the relevant pushout product maps, we obtain a similar characterisation of the category of $k$-types in spectra. 

\begin{prop}\label{prop: GR k-types in spectra as stablisation}
Let $k \geq 0$. Under Hypothesis \ref{hyp: combinatorial}, there is an equality of model structures between the model category of $k$-types in spectra, and the stablisation of $S^{k+1}$-local spaces, that is, 
\[
P_k\s := L_{W_k}\s = \s(L_{S^{k+1}}\T).
\]	
\end{prop}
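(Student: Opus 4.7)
The plan is to mirror the proof of Proposition \ref{prop: GR k-types in orthogonal functors} in the spectral setting. The equality of model structures reduces, just as in the orthogonal functor case, to showing that both model categories have the same cofibrations and the same fibrant objects, since a combinatorial model structure on a fixed underlying category is determined by these two pieces of data. For the cofibrations, observe that $L_{W_k}\s$ is a left Bousfield localization of the stable model structure on $\s$, so it has the cofibrations of the stable model structure. On the other hand, $\s(L_{S^{k+1}}\T)$ is, by construction, a left Bousfield localization of the projective model structure on spectra whose cofibrations are again the cofibrations of the stable model structure. So the two classes of cofibrations agree immediately.

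For the fibrant objects, note that an object of $L_{W_k}\s$ is fibrant if and only if it is a $W_k$-local $\Omega$-spectrum, whereas an object of $\s(L_{S^{k+1}}\T)$ is fibrant if and only if it is a levelwise $S^{k+1}$-local $\Omega$-spectrum. It therefore suffices to identify the $W_k$-local $\Omega$-spectra with the levelwise $S^{k+1}$-local $\Omega$-spectra. The key computation is a rewriting of the pushout product set $W_k = I_{\s} \Box \{S^{k+1} \to D^{k+2}\}$: using that the generating cofibrations of the stable model structure on $\s$ are of the form $F_V(S^{n-1}_+) \to F_V(D^n_+)$ for free spectra $F_V$ at levels $V \in \es{J}$, we obtain
\[
W_k = \{ F_V(S^{n+k+1}_+) \longrightarrow F_V(D^{n+k+2}_+) \mid n \geq 0, V \in \es{J}\}.
\]

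Applying the free-evaluation adjunction $F_V \dashv \Ev_V$, an $\Omega$-spectrum $Z$ is $W_k$-local if and only if for every $V$ the map $\map(D^{n+k+2}_+, Z(V)) \to \map(S^{n+k+1}_+, Z(V))$ is a weak equivalence, that is, if and only if $\pi_i Z(V) = 0$ for all $i \geq k+1$ and all $V$. By the classical identification of the $k$-th Postnikov section with the $S^{k+1}$-nullification (as used in Proposition \ref{prop: GR k-types in orthogonal functors}) this last condition is equivalent to $Z$ being levelwise $S^{k+1}$-local, completing the identification of the fibrant objects.

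I do not anticipate a real obstacle: the argument is a direct translation of the orthogonal functor case, and the only step requiring any care is the explicit bookkeeping of the pushout product to bring $W_k$ into a form on which the adjunction argument can be applied. The mild point worth flagging is that the free spectrum functors $F_V$ play exactly the role that $\es{J}_0(U,-)$ plays in the orthogonal functor proof, so the adjunction step is formally identical.
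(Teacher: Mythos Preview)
Your proposal is correct. It differs from the paper's own proof in execution, though both reduce the question to matching the fibrant objects after observing that the cofibrations agree.

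The paper does not unwind the pushout-product set $W_k$ explicitly. Instead it asserts that the fibrant objects of $P_k\s$ are the ``$k$-truncated $\Omega$-spectra'' and that the fibrant objects of $\s(L_{S^{k+1}}\T)$ are the ``levelwise $k$-truncated $\Omega$-spectra'', and then invokes a \emph{connectivity style argument} (using the $\Omega$-spectrum structure maps to relate $\pi_i X(V)$ to the stable homotopy groups) to identify these two classes. Your route instead rewrites $W_k$ exactly as in the proof of Proposition~\ref{prop: GR k-types in orthogonal functors}, with the free spectrum functors $F_V$ playing the role of the representables $\es{J}_0(U,-)$, and then reads off $W_k$-locality directly via the free--evaluation adjunction as the levelwise vanishing condition $\pi_i Z(V)=0$ for $i\geq k+1$. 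This bypasses the connectivity comparison entirely: you land immediately on the \emph{levelwise} description, which is exactly what is needed to match the fibrant objects of $\s(L_{S^{k+1}}\T)$. Your argument is thus more elementary and more closely parallel to the orthogonal functor case; the paper's argument is terser but leans on an additional (unstated) identification of the two truncation notions for $\Omega$-spectra.
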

\begin{proof}
Both model structures can be described as particular left Bousfield localizations of the stable model structure on spectra, hence have the same cofibrations. The proof reduces to the fact that the model structures have the same fibrant objects. To see this, note that the fibrant objects of $P_k\s$ are the $k$-truncated $\Omega$-spectra, and the fibrant objects of $\s(L_{S^{k+1}}\T)$ are the levelwise $k$-truncated $\Omega$-spectra. Since both fibrant objects are $\Omega$-spectra a connectivity style argument yields that an $\Omega$-spectrum is $k$-truncated if and only if it is levelwise $k$-truncated, and hence both model structures have the same fibrant objects.
\end{proof}

\begin{rem}
Given a compact Lie group $G$, a similar procedure shows that there is an equality of model structures	
\[
P_k\s[G] := L_{W_k}\s[G] = \s(L_{S^{k+1}}\T)[G].
\]	
\end{rem}

%%%%%%%%%%%%%%%%%%%%%%%%%%%%%%%%%%%%%%%%%%%%%%%
		\subsection{Postnikov reconstruction of orthogonal functors}
%%%%%%%%%%%%%%%%%%%%%%%%%%%%%%%%%%%%%%%%%%%%%%%

The collection of $S^{k+1}$-local model structures on the category of orthogonal functors assembles into a tower of model categories\footnote{A tower of model categories is a special instance of a left Quillen presheaf, that is a diagram of the form $F \colon \es{J}^\op \to \sf{MCat}$ for some small indexing category $\es{J}$.}
\begin{align*}
{\sf{P}}_\bullet \colon \bb{N}^\op &\longrightarrow {\sf{MCat}}, \\
						k    &\longmapsto {\Fun}(\es{J}_0, L_{S^{k+1}}\T),
\end{align*}
where $\sf{MCat}$ denotes the category of model categories and left Quillen functors. The homotopy limit of this tower of model categories recovers the projective model structure on orthogonal functors. The existence of a model structure which captures the homotopy theory of the limit of these model categories follows from \cite[Proposition 2.2]{GR16}. In particular, the homotopy limit model structure is a model structure on the category of sections\footnote{A section $X_\bullet$ of the tower ${\sf{P}}_\bullet$ is a sequence 
\[
\cdots \longrightarrow X_k \longrightarrow X_{k+1} \longrightarrow \cdots \longrightarrow X_0,
\]
of orthogonal functors, and a morphism of sections $f \colon X_\bullet \to Y_\bullet$ is given by maps of orthogonal functors $f_k \colon X_k \to Y_k$ for all $k \geq 0$ subject to a commutative ladder condition.} of the diagram ${\sf{P}}_\bullet$ formed by right Bousfield localizing the injective model structure in which a map of sections is a weak equivalence or cofibration if it is a objectwise weak equivalence or cofibration respectively.

\begin{lem}[{\cite[Theorem 1.3 $\&$ Proposition 2.2]{GR16}}]
There is a combinatorial model structure on the category of sections of ${\sf{P}}_\bullet$ where a map $f_\bullet \colon X_\bullet \to Y_\bullet$ is a fibration if and only if $f_0$ is a fibration in ${\Fun}(\es{J}_0, L_{S^{1}}\T)$ and for every $k \geq 1$ the induced map 
\[
\xymatrix{
X_k \ar@{-->}[dr] \ar@/_3ex/[ddr] \ar@/^3ex/[drr]& & \\
& Y_k \times_{Y_{k-1}}X_{k-1} \ar[r] \ar[d] & X_{k-1} \ar[d] \\
& Y_k \ar[r] & Y_{k-1} \\
}
\]
indicated by a dotted arrow in the above diagram is a fibration in ${\Fun}(\es{J}_0, L_{S^{k+1}}\T)$. A section $X_\bullet$ is cofibrant if and only if $X_n$ is cofibrant in ${\Fun}(\es{J}_0, \T)$ and for every $k \geq 0$, the map $X_{k+1} \to X_k$ is a weak equivalence in ${\Fun}(\es{J}_0, L_{S^{k+1}}\T)$. A map of cofibrant sections is a weak equivalence if and only if the map is a weak equivalence in ${\Fun}(\es{J}_0, L_{S^{k+1}}\T)$ for each $k \geq 0$. We will refer to this model structure as the homotopy limit model structure and denote it by $\holim {\sf{P}}_\bullet$.
\end{lem}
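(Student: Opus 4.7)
The plan is to realise ${\sf P}_\bullet$ as a left Quillen presheaf of combinatorial model categories on $\bb{N}^\op$ and invoke the general construction of \cite[Proposition 2.2]{GR16}. Combinatoriality at each level of the tower is Hypothesis \ref{hyp: combinatorial}. The transition functors in the tower are all the identity
\[
\mathds{1} \colon {\Fun}(\es{J}_0, L_{S^{k+2}}\T) \longrightarrow {\Fun}(\es{J}_0, L_{S^{k+1}}\T),
\]
and these are left Quillen because both model structures share the projective cofibrations and the class of $S^{k+2}$-local equivalences is contained in the class of $S^{k+1}$-local equivalences. The containment on equivalences follows from Proposition \ref{prop: GR k-types in orthogonal functors}: an $S^{k+1}$-locally fibrant functor is levelwise a $k$-type, hence a $(k+1)$-type, hence $S^{k+2}$-locally fibrant, and reversal of containments of local objects forces the required containment on local equivalences.

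With the left Quillen presheaf structure established, \cite[Proposition 2.2]{GR16} directly produces the desired combinatorial model structure on the category of sections as a right Bousfield localization of the injective model structure at the class of homotopy constant diagrams. The stated descriptions of fibrations, cofibrant objects, and weak equivalences between cofibrant objects are the explicit descriptions of \emph{loc. cit.}, specialised to the linear indexing $\bb{N}^\op$, in which the matching objects appearing in the general recipe reduce to the iterated fibre products displayed in the statement. In particular, the condition that $X_\bullet$ be cofibrant unwinds to levelwise cofibrancy in ${\Fun}(\es{J}_0, \T)$ together with the homotopy constancy condition that each structure map $X_{k+1} \to X_k$ be a weak equivalence in ${\Fun}(\es{J}_0, L_{S^{k+1}}\T)$.

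The main bookkeeping issue is confirming that the level-$k$ fibration condition really involves a fibration in the local structure ${\Fun}(\es{J}_0, L_{S^{k+1}}\T)$, rather than a fibration in some other level of the tower. This is built into the right Bousfield localization of the injective structure at homotopy constant diagrams: the matching objects are computed in the ambient level-$k$ model category, and the Reedy-style fibration condition of \cite{GR16} reads off exactly as in the statement. The corresponding characterisation of weak equivalences between cofibrant objects then follows formally, since the homotopy constant condition forces the levelwise comparison of weak equivalences to coincide with the comparison in each level $L_{S^{k+1}}\T$. Beyond these verifications, no further non-trivial input is required over the general homotopy limit machinery.
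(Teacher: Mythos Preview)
Your proposal is correct and follows the same route as the paper, which gives no proof beyond the citation to \cite[Theorem 1.3 \& Proposition 2.2]{GR16}. You supply the details the paper leaves implicit: combinatoriality from Hypothesis~\ref{hyp: combinatorial}, the left Quillen presheaf structure via the containment $\{S^{k+1}\text{-local}\}\subseteq\{S^{k+2}\text{-local}\}$, and the specialisation of the Reedy-style matching condition to the linear index $\bb{N}^{\op}$; nothing beyond this is needed.
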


\begin{prop}
Under Hypothesis \ref{hyp: combinatorial} the adjoint pair
\[
\adjunction{\rm{const}}{{\Fun}(\es{J}_0, \T)}{\holim {\sf P}_\bullet}{\lim}
\]
is a Quillen equivalence. 	
\end{prop}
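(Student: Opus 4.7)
The plan is to invoke the Postnikov reconstruction theorem for combinatorial model categories due to Guti\'errez and Roitzheim \cite{GR16}. By Hypothesis \ref{hyp: combinatorial} the projective model structure on ${\Fun}(\es{J}_0, \T)$ is combinatorial, and Proposition \ref{prop: GR k-types in orthogonal functors} identifies each stage ${\sf P}_k = {\Fun}(\es{J}_0, L_{S^{k+1}}\T)$ with the model structure of $k$-types of ${\Fun}(\es{J}_0, \T)$. Thus ${\sf P}_\bullet$ is precisely the Postnikov tower of model categories associated to ${\Fun}(\es{J}_0,\T)$ in the sense of \cite{GR16}, and the desired Quillen equivalence is a direct instance of their main reconstruction theorem.

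First I would confirm that $(\mathrm{const}, \lim)$ is a Quillen adjunction. Since each ${\Fun}(\es{J}_0, L_{S^{k+1}}\T)$ is a left Bousfield localization of the projective model structure, its cofibrations coincide with the projective cofibrations. The constant section functor therefore sends (acyclic) projective cofibrations to levelwise (acyclic) cofibrations, which are in particular injective cofibrations; moreover, the constant section of a projectively cofibrant object $F$ is cofibrant in $\holim {\sf P}_\bullet$ since its transition maps are identities (hence weak equivalences in every $L_{S^{k+1}}$-localization).

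Next I would verify the derived unit and counit are weak equivalences. For the derived unit, let $F$ be cofibrant in ${\Fun}(\es{J}_0,\T)$ and take a fibrant replacement $\tilde F_\bullet$ of $\mathrm{const}(F)$ in $\holim {\sf P}_\bullet$. Each level $\tilde F_k$ is a fibrant replacement of $F$ in ${\Fun}(\es{J}_0, L_{S^{k+1}}\T)$, which evaluated at $V \in \es{J}_0$ recovers the classical Postnikov section $P_k F(V)$. The derived unit at $V$ is the canonical map $F(V) \to \lim_k P_k F(V)$, a weak equivalence by the standard convergence of Postnikov towers of based spaces. For the derived counit, one takes a cofibrant-fibrant section $X_\bullet$ and shows that $\mathrm{const}(\lim_k X_k) \to X_\bullet$ is a weak equivalence in $\holim {\sf P}_\bullet$; level-by-level at $k$ this is the map $\lim_j X_j \to X_k$ in ${\Fun}(\es{J}_0, L_{S^{k+1}}\T)$, which is a weak equivalence because each $X_k$ is levelwise $S^{k+1}$-local and the transition maps become weak equivalences in the $L_{S^{k+j}}$-local model structures for $j \geq 1$.

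The main obstacle is the derived counit: one must ensure that cofibrant-fibrant sections in $\holim {\sf P}_\bullet$ admit an explicit description compatible with the classical Postnikov convergence, so that the levelwise computation above really does yield weak equivalences in each local model structure. This is exactly the content of \cite[Theorem 1.3]{GR16}, and once Hypothesis \ref{hyp: combinatorial} and Proposition \ref{prop: GR k-types in orthogonal functors} are in place, the result follows by direct appeal.
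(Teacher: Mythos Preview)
Your proposal is correct and follows essentially the same approach as the paper. Both arguments identify ${\sf P}_\bullet$ with the Guti\'errez--Roitzheim Postnikov tower of ${\Fun}(\es{J}_0,\T)$ via Proposition~\ref{prop: GR k-types in orthogonal functors} and then reduce to \cite{GR16}; the paper's version is terser, citing \cite[Lemma~2.4]{GR16} for the Quillen adjunction and then observing that, since limits in functor categories are computed levelwise, both the derived unit and counit reduce objectwise to the case of based spaces handled by \cite[Theorem~2.5]{GR16}, whereas you spell out the unit and counit computations more explicitly.
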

\begin{proof}
The adjoint pair exists, and is a Quillen adjunction by \cite[Lemma 2.4]{GR16}.

To see that the adjoint pair is a Quillen equivalence let $X_\bullet$ be a cofibrant and fibrant section in the homotopy limit model structure. Showing that 
\[
\rm{const}\lim X_\bullet \to X_\bullet,
\]
is a weak equivalence is equivalent to showing that the map $\lim X_\bullet \to X_k$ is a weak equivalence in ${\Fun}(\es{J}_0, L_{S^{k+1}}\T)$ for all $k \geq 0$. This is in turn, equivalent to the map $(\lim X_\bullet)(U) \to X_k(U)$ being a weak equivalence in $L_{S^{k+1}}\T$ for all $k \geq 0$. Since limits in functor categories are computed objectwise, the fact that the unit is a weak equivalence follows from \cite[Theorem 2.5]{GR16}. A similar argument, shows that the counit is also a weak equivalence. 
\end{proof}

%%%%%%%%%%%%%%%%%%%%%%%%%%%%%%%%%%%%%%%%%%%%%%%
\subsection{Postnikov reconstruction for spectra with an $O(n)$-action}
%%%%%%%%%%%%%%%%%%%%%%%%%%%%%%%%%%%%%%%%%%%%%%%
The aim is to show that similar reconstruction theorems may be obtained for the $n$-homogeneous model structures. We first start by investigating analogous theorems for spectra and show that such reconstructions are compatible with the zigzag of Quillen equivalences between spectra with an $O(n)$-action and the $n$-homogeneous model structure. Proposition \ref{prop: GR k-types in spectra as stablisation} and \cite[Subsection 2.1]{GR16} imply that the functor
\begin{align*}
{\sf{P}}_{\bullet}^{\s} \colon \bb{N}^\op &\longrightarrow {\sf{MCat}}, \\
								k &\longmapsto \s(L_{S^{k+1}}\T),
\end{align*}
defines a left Quillen presheaf\footnote{Alternatively, the adjunction $\adjunction{\mathds{1}}{\s(L_{S^{k+2}}\T)}{\s(L_{S^{k+1}}\T)}{\mathds{1}}$, is a Quillen adjunction. This fact follows from the facts that both model structures have the same cofibrations and a $S^{k+1}$-local space is $S^{k+2}$-local as $\langle \Sigma W \rangle \leq  \langle W \rangle$ for all based spaces $W$, see e.g., \cite[\S9.9]{BousfieldPeriodicity}. Hence ${\sf{P}}_{\bullet}^{\s}$ is a left Quillen presheaf.}. This left Quillen presheaf is `convergent' in the following sense.

\begin{prop}\label{prop: limit of spectra}
Under Hypothesis \ref{hyp: combinatorial} the adjoint pair
\[
\adjunction{\rm{const}}{\s}{\holim~{\sf{P}}_{\bullet}^{\s}}{\lim}
\]
is a Quillen equivalence. 	
\end{prop}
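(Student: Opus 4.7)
The plan is to follow the blueprint of the preceding Postnikov reconstruction proposition for orthogonal functors almost verbatim, exploiting that orthogonal spectra are themselves a category of topologically enriched functors from the category $\es{I}$ to $\T$. First, $(\mathrm{const}, \lim)$ being a Quillen adjunction is an immediate application of \cite[Lemma 2.4]{GR16} to the left Quillen presheaf ${\sf{P}}_{\bullet}^{\s}$ of Lemma \ref{lem: LQP for spectra}.

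For the Quillen equivalence, I would take a cofibrant-fibrant section $X_\bullet$ in $\holim~{\sf{P}}_{\bullet}^{\s}$ and show that the counit $\mathrm{const}\lim X_\bullet \to X_\bullet$ is a weak equivalence. By the description of weak equivalences between cofibrant sections in the homotopy limit model structure, this amounts to checking that $\lim X_\bullet \to X_k$ is a weak equivalence in $\s(L_{S^{k+1}}\T)$ for every $k \geq 0$. Since limits in the category of orthogonal spectra are formed levelwise in $\T$, and since Proposition \ref{prop: GR k-types in spectra as stablisation} identifies $\s(L_{S^{k+1}}\T)$ with the model structure $P_k\s$ of $k$-types in spectra, this reduces further to checking that the levelwise map $(\lim X_\bullet)(V) \to X_k(V)$ is an $S^{k+1}$-local equivalence of based spaces for every $V \in \es{I}$. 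This is precisely the statement obtained by applying \cite[Theorem 2.5]{GR16} to the pointwise-induced left Quillen presheaf on $\T$ evaluated at each $V$. A symmetric argument handles the derived unit, reducing the claim to the classical fact that any cofibrant spectrum is weakly equivalent to the homotopy limit of its Postnikov tower.

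The only real subtlety, and the step where I expect the argument to demand care, is ensuring compatibility of the stable structure with the levelwise Postnikov reconstruction of spaces. This is the essential content of Proposition \ref{prop: GR k-types in spectra as stablisation}: stable $k$-truncation of spectra is detected levelwise for $\Omega$-spectra, so the levelwise Postnikov reconstructions in $\T$ glue together coherently into a Postnikov reconstruction in $\s$. Once this identification is in hand, the proof is structurally identical to the one for orthogonal functors.
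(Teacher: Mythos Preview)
Your approach mirrors the argument given for orthogonal functors, but there is a genuine obstruction to transporting it verbatim to spectra. For $\Fun(\es{J}_0,\T)$ the projective model structure and all its $S^{k+1}$-localizations have weak equivalences and fibrations detected levelwise, so a bifibrant section of the homotopy limit evaluates at each $U\in\es{J}_0$ to a bifibrant section of the space-level presheaf, and \cite[Theorem 2.5]{GR16} applies directly. For spectra this breaks down: $\s(L_{S^{k+1}}\T)=P_k\s$ is a localization of the \emph{stable} model structure, and neither its weak equivalences nor its fibrant objects are characterised levelwise. Concretely, if $X_\bullet$ is bifibrant in $\holim~{\sf P}_\bullet^{\s}$ then each $X_k$ is a $k$-truncated $\Omega$-spectrum, so $X_k(V)$ is $(k+\dim V)$-truncated rather than $k$-truncated. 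The evaluated section $X_\bullet(V)$ is therefore \emph{not} fibrant in $\holim_k L_{S^{k+1}}\T$ for $\dim V>0$, and you cannot invoke \cite[Theorem 2.5]{GR16} for it. Proposition~\ref{prop: GR k-types in spectra as stablisation} only identifies the two model structures; it does not say that $P_k\s$-equivalences are levelwise $S^{k+1}$-local equivalences---that holds only between $\Omega$-spectra, which you would have to verify for $\lim X_\bullet$ and each $X_k$ before even attempting the levelwise reduction.

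The paper avoids this entirely by taking a different route: it shows that the left adjoint $\mathrm{const}$ reflects weak equivalences between cofibrant objects (unwinding via the colocal Whitehead theorem to a statement about $\pi_\ast$), and then verifies the derived counit directly using a Milnor $\lim^1$ exact sequence on stable homotopy groups, comparing the tower $(Y_j)_{j\geq k}$ to the constant tower at $Y_{k+1}$. No appeal to the space-level result is made. Your idea can be salvaged---the evaluated section is a \emph{shifted} Postnikov tower, and one can argue directly that its limit maps to each stage by an equivalence in the required range---but carrying that out is essentially the $\lim^1$ computation the paper performs, not a citation of \cite[Theorem 2.5]{GR16}.
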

\begin{proof}
The fact that the adjoint pair is a Quillen adjunction follows from \cite[Lemma 2.4]{GR16}. 

The left adjoint reflects weak equivalences between cofibrant objects. Indeed, if $X \to Y$ is a map between cofibrant spectra $X$ and $Y$, such that 
\[
\mathrm{const}(X) \longrightarrow \mathrm{const}(Y),
\]
is a weak equivalence in $\holim~{\sf{P}}_{\bullet}^{\s}$, then 
\[
\mathrm{const}(X) \longrightarrow \mathrm{const}(Y),
\]
is a weak equivalence in $\mathrm{Sect}(\bb{N},{\sf{P}}_{\bullet}^{\s})$ by the colocal Whitehead's theorem and the fact that the left adjoint is left Quillen and thus preserves cofibrant objects. It follows that for each $k \in \bb{N}$, the induced map 
\[
\mathrm{const}(X)_k \longrightarrow \mathrm{const}(Y)_k,
\]
is a weak equivalence in $\s(L_{S^{k+1}}\T)$, that is, $X \to Y$ is a weak equivalence in $\s(L_{S^{k+1}}\T)$ for all $k$. Unpacking the definition of a weak equivalence in $\s(L_{S^{k+1}}\T)$ and using the fact that the right adjoint is a right Quillen functor and hence preserves weak equivalences between fibrant objects, we see that the induced map
\[
\lim~P_kX \longrightarrow \lim~P_kY,
\]
is a weak equivalence in $\s$, and hence, so is the map $X \to Y$. 

It is left to show that the derived counit is an isomorphism. Let $Y_\bullet$ be bifibrant in $\holim~{\sf{P}}_{\bullet}^{\s}$. The condition that the counit applied to $Y_\bullet$ is a weak equivalence is equivalent to asking for the map
\[
\lim_{\geq k}~P_kY_\bullet \longrightarrow Y_k,
\]
to be  a weak equivalence in $\s(L_{S^{k+1}}\T)$ for all $k \in \bb{N}$. The structure maps of $Y_\bullet$ induce a map of towers
\[
\xymatrix{
\cdots \ar[r] & Y_j \ar[d] \ar[r] & \cdots \ar[r] & Y_{k+3} \ar[r] \ar[d]  &Y_{k+2} \ar[r] \ar[d] & Y_{k+1} \ar[d] \\
\cdots & Y_{k+1} \ar[r] & \cdots \ar[r] & Y_{k+1} \ar[r] & Y_{k+1} \ar[r] & Y_{k+1}
}
\]
in which each vertical arrow is a weak equivalence in $\s(L_{S^{k+1}}\T)$. This map of towers induces a map
\[
\xymatrix{
0 \ar[r] & \lim^1_{\geq k}~\pi_{i+1}(Y_\bullet) \ar[r] \ar[d] & \pi_i(\lim_{\geq k}~Y_\bullet) \ar[r] \ar[d] & \lim_{\geq k}~\pi_i(Y_\bullet) \ar[r] \ar[d] & 0  \\
0 \ar[r] & \lim^1_{\geq k}~\pi_{i+1}(Y_{k+1}) \ar[r] & \pi_i(\lim_{\geq k}~Y_{k+1}) \ar[r] & \lim_{\geq k}~\pi_i(Y_{k+1}) \ar[r] & 0 
}
\]
of short exact sequences. For $0 \leq i < n$ the left and right hand side maps are isomorphisms hence the map 
\[
\lim_{\geq k}~ Y_\bullet \longrightarrow Y_{k+1},
\]
is a weak equivalence in $\s(L_{S^{k+1}}\T)$ for all $k$, and it follows that the required map 
\[
\lim_{\geq k}~ Y_\bullet \longrightarrow Y_{k+1} \longrightarrow Y_k,
\]
is a weak equivalence in $\s(L_{S^{k+1}}\T)$ for all $k$.
\end{proof}

A similar justification to before provides a left Quillen presheaf
\begin{align*}
{\sf{P}}_\bullet^{\s[O(n)]} \colon \bb{N}^\op &\longrightarrow {\sf{MCat}}, \\
								k &\longmapsto \s(L_{S^{k+1}}\T)[O(n)],
\end{align*}
where $\s(L_{S^{k+1}}\T)[O(n)]$ is the category of $O(n)$-objects in the category of $k$-types in spectra. This is equivalent to the category of $k$-types in spectra with an $O(n)$-action. As a corollary to Proposition \ref{prop: limit of spectra}, we obtain that the induced left Quillen presheaf on spectra with an $O(n)$-action is also suitably convergent. 

\begin{cor}\label{cor: limit of spectra with O(n)-action}
Under Hypothesis \ref{hyp: combinatorial} the adjoint pair
\[
\adjunction{\rm{const}}{\s[O(n)]}{\holim~{\sf{P}}_{\bullet}^{\s[O(n)]}}{\lim}
\]
is a Quillen equivalence. 
\end{cor}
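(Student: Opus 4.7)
The plan is to deduce this from Proposition \ref{prop: limit of spectra} by exploiting that every model structure in sight on spectra with an $O(n)$-action is right-transferred from the corresponding model structure on plain spectra along the adjunction $(G_+ \wedge (-), i^*)$ of Proposition \ref{prop: E-local spectra}. Concretely, I would first observe that the forgetful functor $i^* \colon \s[O(n)] \to \s$ both creates and reflects weak equivalences and fibrations, and that the analogous statement holds levelwise for ${\sf P}_\bullet^{\s[O(n)]} \to {\sf P}_\bullet^{\s}$. Consequently the right Quillen functors $i^*$ assemble into a morphism of left Quillen presheaves over $\bb{N}^\op$, and because limits in categories of $G$-objects are created by the forgetful functor, $i^*$ also commutes with the $\lim$ functor from the section category.

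Next I would invoke the explicit description of the homotopy limit model structure recalled before Proposition \ref{prop: limit of spectra}: a map $f_\bullet \colon X_\bullet \to Y_\bullet$ between bifibrant sections in $\holim~{\sf P}_\bullet^{\s[O(n)]}$ is a weak equivalence precisely when each $f_k$ is a weak equivalence in $\s(L_{S^{k+1}}\T)[O(n)]$, equivalently (by transfer) when $i^* f_k$ is a weak equivalence in $\s(L_{S^{k+1}}\T)$, equivalently when $i^* f_\bullet$ is a weak equivalence in $\holim~{\sf P}_\bullet^{\s}$. The same holds for fibrations and cofibrations between cofibrant objects. This identifies the adjunction
\[
\adjunction{\mathrm{const}}{\s[O(n)]}{\holim~{\sf P}_\bullet^{\s[O(n)]}}{\lim}
\]
as the right transfer along $i^*$ of the Quillen adjunction in Proposition \ref{prop: limit of spectra}.

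With this in place, the Quillen equivalence is automatic: if $X \to Y$ is a map of cofibrant spectra with $O(n)$-action such that $\mathrm{const}(X) \to \mathrm{const}(Y)$ is a weak equivalence in $\holim~{\sf P}_\bullet^{\s[O(n)]}$, then applying $i^*$ and using Proposition \ref{prop: limit of spectra} shows $i^*X \to i^*Y$ is a $\pi_*$-isomorphism, hence so is $X \to Y$. Dually, for $Y_\bullet$ bifibrant in $\holim~{\sf P}_\bullet^{\s[O(n)]}$ the counit $\mathrm{const}(\lim Y_\bullet) \to Y_\bullet$ is tested componentwise by $i^*$, where it becomes the counit of the (equivalent) adjunction on plain spectra.

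The one subtlety, and the step I would be most careful about, is verifying that the transfer of model structures really is compatible with the homotopy-limit construction: that the levelwise $G_+ \wedge (-) \dashv i^*$ adjunctions between ${\sf P}_\bullet^{\s}$ and ${\sf P}_\bullet^{\s[O(n)]}$ induce a Quillen adjunction on homotopy limits with the expected right adjoint. This amounts to checking that $i^*$ preserves both the injective fibrations used to build the homotopy limit model structure and the matching-object fibrations appearing in the description of fibrations of sections, both of which follow from $i^*$ preserving levelwise fibrations and commuting with limits. Once this compatibility is verified, the conclusion is a formal consequence of Proposition \ref{prop: limit of spectra}.
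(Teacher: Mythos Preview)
Your proposal is correct and aligns with the paper's intent: the paper states this as an immediate corollary of Proposition \ref{prop: limit of spectra} and provides no separate argument, relying implicitly on the preceding lemma that $\s(L_{S^{k+1}}\T)[O(n)] = P_k(\s[O(n)])$, so that ${\sf P}_\bullet^{\s[O(n)]}$ is exactly the Postnikov presheaf of $k$-types in $\s[O(n)]$. Your reduction via the forgetful functor $i^*$ is one natural way to spell this out; an equally valid and perhaps more direct route is simply to rerun the proof of Proposition \ref{prop: limit of spectra} verbatim with $\s$ replaced by $\s[O(n)]$, since the homotopy groups and the $\lim^1$ short exact sequence used there are computed on underlying spectra. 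The only caution in your transfer argument is that $i^*$ need not preserve cofibrancy, so you cannot literally invoke the ``reflects weak equivalences between cofibrant objects'' clause of Proposition \ref{prop: limit of spectra} after applying $i^*$; however this is harmless, because the relevant step only uses the explicit levelwise description of weak equivalences in the $P_k$-localizations together with Postnikov convergence, neither of which requires cofibrancy in $\s$.
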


%%%%%%%%%%%%%%%%%%%%%%%%%%%%%%%%%%%%%%%%%%%%%%%
		\subsection{Postnikov reconstruction for the intermediate categories}
%%%%%%%%%%%%%%%%%%%%%%%%%%%%%%%%%%%%%%%%%%%%%%%

The functor
\begin{align*}
{\sf{P}}_\bullet^{\es{J}_n} \colon \bb{N}^{\op} &\longrightarrow {\sf{MCat}}, \\ 
k &\longmapsto L_{S^{k+1}}{\Fun}_{O(n)}(\es{J}_n, O(n)\T),
\end{align*}	
defines a left Quillen presheaf, since there is an equality of model structures between the $S^{k+1}$-local $n$-stable model structure and the model structure of $k$-types in ${\Fun}_{O(n)}(\es{J}_n, O(n)\T)$. The proof of which is completely analogous to the case for spectra, see Proposition \ref{prop: GR k-types in spectra as stablisation}. Since the $S^{k+1}$-local $n$-stable model structure agrees with the model structure of $k$-types, we will denote both model structure by $P_k{\Fun}_{O(n)}(\es{J}_n, O(n)\T)$.

The homotopy limit of this left Quillen presheaf agrees with the homotopy limit of the left Quillen presheaf on spectra with an $O(n)$-action in the sense that the homotopy limit model categories are Quillen equivalent. In detail, the adjunction
\[
\adjunction{(\alpha_n)_!}{{\Fun}_{O(n)}(\es{J}_n, O(n)\T)}{\s[O(n)]}{(\alpha_n)^*},
\]
of \cite[\S8]{BO13} induces an adjunction
\[
\adjunction{(\alpha_n)_!^\bb{N}}{\Fun(\bb{N}, {\Fun}_{O(n)}(\es{J}_n, O(n)\T))}{\Fun(\bb{N}, \s[O(n)])}{(\alpha_n^*)^\bb{N}},
\]
where $(\alpha_n^*)^\bb{N} = (\alpha_n)^* \circ (-)$. This adjunction in turn induces an adjunction
\[
\adjunction{(\alpha_n)_!^\bb{N}}{\holim~{\sf{P}}_\bullet^{\es{J}_n}}{\holim~{\sf{P}}_\bullet^{\s[O(n)]}}{(\alpha_n^*)^\bb{N}}.
\]

\begin{prop}\label{prop: induced adjunction on holim QE}
Under Hypothesis \ref{hyp: combinatorial} the adjoint pair
\[
\adjunction{(\alpha_n)_!^\bb{N}}{\holim~{\sf{P}}_\bullet^{\es{J}_n}}{\holim~{\sf{P}}_\bullet^{\s[O(n)]}}{(\alpha_n^*)^\bb{N}},
\]
is a Quillen equivalence.
\end{prop}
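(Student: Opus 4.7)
The approach is to reduce the statement to a pointwise Quillen equivalence at each level, and then apply the general principle that a natural transformation of left Quillen presheaves which is a pointwise Quillen equivalence induces a Quillen equivalence between the homotopy limit model structures. Specifically, at each $k \geq 0$, taking $S = \{\ast \to S^{k+1}\}$ in Theorem \ref{thm: E-local spectra and intermediate} yields a Quillen equivalence
\[
(\alpha_n)_! \colon P_k{\Fun}_{O(n)}(\es{J}_n, O(n)\T) \rightleftarrows P_k(\s[O(n)]) \colon (\alpha_n)^{*}.
\]
These adjunctions are strictly compatible with the identity structure maps of the left Quillen presheaves ${\sf{P}}_\bullet^{\es{J}_n}$ and ${\sf{P}}_\bullet^{\s[O(n)]}$, so they assemble into a pointwise Quillen equivalence of left Quillen presheaves.

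First, I would verify that the induced adjunction is Quillen between the homotopy limit model structures. Cofibrations in the homotopy limit model structure are cofibrations in the injective model structure, which are detected pointwise; since $(\alpha_n)_!$ is left Quillen level by level, $(\alpha_n)_!^\bb{N}$ preserves these. For the fibrations, which are characterized by the relative maps in the cited lemma on the homotopy limit model structure being pointwise fibrations, it suffices to observe that $(\alpha_n^*)^{\bb{N}}$ preserves them, which follows pointwise from $(\alpha_n)^{*}$ being right Quillen and in particular preserving pullbacks and pointwise fibrations.

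For the Quillen equivalence, I would argue that the derived unit and derived counit are weak equivalences. A map between cofibrant sections in the homotopy limit model structure is a weak equivalence precisely when it is a weak equivalence pointwise in each $P_k$-model structure. Pointwise, the derived unit of $(\alpha_n)_!^{\bb{N}} \dashv (\alpha_n^*)^{\bb{N}}$ applied to a cofibrant section $X_\bullet$ coincides at level $k$ with the derived unit of the pointwise Quillen equivalence applied to $X_k$, which is a weak equivalence in $P_k{\Fun}_{O(n)}(\es{J}_n, O(n)\T)$. The same argument treats the derived counit.

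The main obstacle is the interaction between (co)fibrant replacement in the homotopy limit model structure and pointwise replacement, since cofibrant sections in $\holim~{\sf{P}}_{\bullet}^{\es{J}_n}$ are not merely pointwise cofibrant but additionally have their structure maps be local weak equivalences in the successive $P_{k+1}$-model structures. This is handled by the general machinery of homotopy limits of left Quillen presheaves of Bergner \cite{BergnerHomotopyLimits} and Barwick \cite{BarwickLeftRight}, and the argument parallels that of Corollary \ref{cor: limit of spectra with O(n)-action}, using the pointwise Quillen equivalence in place of the identity to transport the tower/$\mathrm{lim}^1$ analysis from the spectra side to the $\es{J}_n$ side via the pointwise $n\pi_\ast$-isomorphism.
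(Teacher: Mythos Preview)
Your high-level strategy matches the paper's: reduce everything to the pointwise Quillen equivalences $(\alpha_n)_! \dashv (\alpha_n)^*$ at each level $k$. However, two steps in your execution do not go through as written.

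First, your cofibration argument is incomplete. You observe that cofibrations in the homotopy limit model structure are injective cofibrations and that $(\alpha_n)_!^{\bb{N}}$ preserves injective cofibrations levelwise. But this only shows that $(\alpha_n)_!^{\bb{N}}$ sends holim-cofibrations to \emph{injective} cofibrations on the target side, not to cofibrations in the target right Bousfield localization. The paper handles this by restricting to cofibrations between cofibrant objects, which by \cite[Proposition 3.3.16(2)]{Hi03} are injective cofibrations, and then invoking the Quillen adjunction on the injective model structures together with Dugger's criterion.

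Second, and more seriously, your final paragraph does not resolve the obstacle you correctly identify. The reference to Corollary~\ref{cor: limit of spectra with O(n)-action} and a ``$\lim^1$ analysis'' is misplaced: that corollary concerns the $\mathrm{const}\dashv\lim$ adjunction, not $(\alpha_n)_!^{\bb{N}}\dashv(\alpha_n^*)^{\bb{N}}$, and no tower/$\lim^1$ argument is needed here. The paper's route is cleaner and avoids any delicate comparison of fibrant replacements: since the homotopy limit model structure is a right Bousfield localization of the injective model structure, the colocal Whitehead theorem says that a map between \emph{cofibrant} sections is a holim-weak equivalence if and only if it is an injective (hence levelwise) weak equivalence. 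One then checks that the derived counit on a bifibrant $Y_\bullet$ is a map between cofibrant objects, so it suffices to verify it levelwise, where it reduces to the derived counit of the ordinary $(\alpha_n)_!\dashv(\alpha_n)^*$ adjunction, a weak equivalence by \cite[Proposition 8.3]{BO13}. Reflection of weak equivalences is handled the same way. No appeal to unspecified ``general machinery'' of Bergner or Barwick is required.
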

\begin{proof}
Fibrations of the homotopy limit model structure of ${\sf{P}}_\bullet^{\s[O(n)]}$ are precisely the fibrations of the injective model structure on the category of sections of ${\sf{P}}_\bullet^{\s[O(n)]}$ since the homotopy limit model structure is a right Bousfield localization of the injective model structure. A similar characterisation holds for the left Quillen presheaf ${\sf{P}}_\bullet^{\es{J}_n}$, hence to show that the right adjoint preserves fibrations it suffices to show that the left adjoint preserves acyclic cofibrations of the injective model structure on the categories of sections. To see this, note that the adjunction
\[
\adjunction{(\alpha_n)_!}{{\Fun}_{O(n)}(\es{J}_n, O(n)\T)}{\s[O(n)]}{(\alpha_n)^*},
\]
is a Quillen adjunction and hence, so too is the induced adjunction on the injective model structures on the categories of sections. 

To show that the left adjoint preserves cofibrations it suffices to show that cofibrations between cofibrant objects are preserved. As the homotopy limit model structures are right Bousfield localizations \cite[Proposition 3.3.16(2)]{Hi03} implies that cofibrations between cofibrant objects are cofibrations of the injective model structures on the categories of sections which by the analogous reasoning as above are preserved by the left adjoint. This yields that the adjunction in question is a Quillen adjunction. 

To show that the adjunction is a Quillen equivalence notice that the right adjoint reflects weak equivalences between cofibrant objects by the colocal Whitehead's Theorem \cite[Theorem 3.2.13(2)]{Hi03}, and the fact that the induced adjunction on the injective model structures on the categories of sections is a Quillen equivalence since for $B_\bullet \in {\sf{Sect}}(\bb{N}, {\sf{P}}_\bullet^{\es{J}_n})$ and $X_\bullet \in {\sf{Sect}}(\bb{N}, {\sf{P}}_\bullet^{\s[O(n)]})$, a map $B_\bullet \to (\alpha_n^*)^\bb{N}X_\bullet$ is a weak equivalence if and only if for each $k \in \bb{N}$, the map $B_k \to (\alpha_n^*)^\bb{N}X_k$ is a weak equivalence of spectra, which in turn happens if and only if the adjoint map $(\alpha_n)_!B_k \to X_k$ is an $n$-stable equivalence, which is precisely the condition that the adjoint map $(\alpha_n)_!^\bb{N}B_\bullet \to X_\bullet$ is a weak equivalence. 

It is left to show that the derived counit is an isomorphism. Let $Y_\bullet$ be bifibrant in the homotopy limit model structure of the left Quillen presheaf ${\sf{P}}_\bullet^{\s[O(n)]}$. Then the derived counit
\[
(\alpha_n)_!^\bb{N}~\cofrep~((\alpha_n^*)^\bb{N}Y_\bullet) \longrightarrow Y_\bullet,
\]
is a map between cofibrant objects, hence a weak equivalence in the homotopy limit model structure if and only if a weak equivalence in the injective model structure on the category of sections i.e., if and only if for each $k \in \bb{N}$, the induced map
\[
(\alpha_n)_!(\alpha_n)^*Y_k \longrightarrow Y_k,
\]
is a weak equivalence. This last is always a weak equivalence by \cite[Proposition 8.3]{BO13}.
\end{proof}

As a corollary, we see that the left Quillen presheaf ${\sf{P}}_\bullet^{\es{J}_n}$ is convergent.

\begin{cor}\label{cor: intermediate postnikov}
Under hypothesis \ref{hyp: combinatorial} the adjoint pair
\[
\adjunction{\mathrm{const}}{{\Fun}_{O(n)}(\es{J}_n, O(n)\T)}{\holim~{\sf{P}}_\bullet^{\es{J}_n}}{\lim},
\]
is a Quillen equivalence.	
\end{cor}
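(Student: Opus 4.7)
The plan is to deduce the result via a two-out-of-three argument applied to a commutative square of Quillen adjunctions whose other three sides have already been shown to be Quillen equivalences.

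First, I would verify that the stated adjoint pair is in fact a Quillen adjunction. This follows directly from \cite[Lemma 2.4]{GR16}, applied to the left Quillen presheaf ${\sf{P}}_\bullet^{\es{J}_n}$ of Lemma \ref{lem: LQP for intermediate}, by the same argument used in the proof of Proposition \ref{prop: limit of spectra} and Corollary \ref{cor: limit of spectra with O(n)-action}.

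Next, I would assemble the commutative square of Quillen adjunctions
\[
\begin{tikzcd}[column sep=large]
{\Fun}_{O(n)}(\es{J}_n, O(n)\T) \ar[r, shift left, "\mathrm{const}"] \ar[d, shift right=2, "(\alpha_n)_!"'] & \holim~{\sf{P}}_\bullet^{\es{J}_n} \ar[l, shift left, "\lim"] \ar[d, shift right=2, "(\alpha_n)_!^\bb{N}"'] \\
\s[O(n)] \ar[r, shift left, "\mathrm{const}"] \ar[u, shift right=2, "(\alpha_n)^\ast"'] & \holim~{\sf{P}}_\bullet^{\s[O(n)]} \ar[l, shift left, "\lim"] \ar[u, shift right=2, "(\alpha_n^\ast)^\bb{N}"']
\end{tikzcd}
\]
whose commutativity is immediate from the objectwise definition of $(\alpha_n)_!^\bb{N}$ and $(\alpha_n^\ast)^\bb{N}$: the constant section on $(\alpha_n)_! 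X$ coincides with the componentwise application of $(\alpha_n)_!$ to the constant section on $X$, and similarly for the right adjoints with the limit. Of the four adjunctions in this square, three are already known to be Quillen equivalences: the left vertical adjunction is the Barnes--Oman Quillen equivalence \cite[Proposition 8.3]{BO13}; the bottom horizontal adjunction is Corollary \ref{cor: limit of spectra with O(n)-action}; and the right vertical adjunction is Proposition \ref{prop: induced adjunction on holim QE}.

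Finally, I would conclude by two-out-of-three for Quillen equivalences: since the composite of the left vertical with the bottom horizontal derived adjunctions is naturally isomorphic (via commutativity of the square) to the composite of the top horizontal with the right vertical, and three of these four derived adjunctions are equivalences of homotopy categories, the top horizontal derived adjunction must also be an equivalence. I do not expect any serious obstacle here; the only mildly delicate point is the verification of commutativity of the square at the level of both left and right adjoints, which however is forced by the componentwise definitions of $(\alpha_n)_!^\bb{N}$ and $(\alpha_n^\ast)^\bb{N}$ and the fact that the constant and limit functors commute with componentwise application of any adjunction.
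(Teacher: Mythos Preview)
Your proposal is correct and follows essentially the same approach as the paper: assemble the commutative square of Quillen adjunctions involving $(\alpha_n)_!$, $(\alpha_n)_!^\bb{N}$, and the two $(\mathrm{const},\lim)$ pairs, observe that three of the four sides are Quillen equivalences by \cite[Proposition 8.3]{BO13}, Corollary \ref{cor: limit of spectra with O(n)-action}, and Proposition \ref{prop: induced adjunction on holim QE}, and conclude by two-out-of-three. The only difference is that you spell out the verification of the Quillen adjunction and the commutativity of the square in slightly more detail than the paper does.
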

\begin{proof}
Consider the commutative diagram
\[
\xymatrix@C+3ex{
{\Fun}_{O(n)}(\es{J}_n, O(n)\T) \ar@<1ex>[r]^{(\alpha_n)_!} \ar@<-1ex>[d]_{\rm{const}} & \s[O(n)] \ar@<1ex>[l]^{(\alpha_n)^\ast} \ar@<-1ex>[d]_{\rm{const}} \\
\holim~{\sf{P}}_\bullet^{\es{J}_n} \ar@<-1ex>[u]_{\lim} \ar@<1ex>[r]^{(\alpha_n)_!^\bb{N}}  & \holim~{\sf{P}} \ar@<1ex>[l]^{(\alpha_n^\ast)^\bb{N}} \ar@<-1ex>[u]_{\lim}
}
\]
of Quillen adjunctions in which three out of the four adjoint pairs are Quillen equivalences by \cite[Proposition 8.3]{BO13}, Corollary \ref{cor: limit of spectra with O(n)-action} and Proposition \ref{prop: induced adjunction on holim QE}.  It follows since Quillen equivalences satisfy the $2$-out-of-$3$ property, that the remaining Quillen adjunction is a Quillen equivalence. 
\end{proof}

%%%%%%%%%%%%%%%%%%%%%%%%%%%%%%%%%%%%%%%%%%%%%%%
		\subsection{Postnikov reconstruction for homogeneous functors}
%%%%%%%%%%%%%%%%%%%%%%%%%%%%%%%%%%%%%%%%%%%%%%%

The same approach as we have just employed from moving from spectra with an $O(n)$-action to the intermediate categories yields similar results for the homogeneous model structures. We choose to model $S^{k+1}$-local $n$-homogeneous functors by the $S^{k+1}$-periodic $n$-homogeneous model structures of Proposition \ref{prop: W-local n-homog as RBL}. 

\begin{lem}
The functor
\begin{align*}
{\sf{P}}_{\bullet}^{\homog{n}} \colon \bb{N}^\op &\longrightarrow {\sf{MCat}}, \\
										k &\longrightarrow {\homog{n}}(\es{J}_0, P_{S^{k+1}}\T),
\end{align*}
defines a left Quillen presheaf.
\end{lem}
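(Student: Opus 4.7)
I plan to prove that for each $k \geq 0$ the identity functor
\[
\mathds{1}\colon \homog{n}(\es{J}_0, P_{S^{k+2}}\T) \longrightarrow \homog{n}(\es{J}_0, P_{S^{k+1}}\T)
\]
is a left Quillen functor, which would give the required presheaf structure since functoriality along $\bb{N}^{op}$ is encoded trivially by iterating these identities. The key input is $\langle S^{k+2}\rangle \leq \langle S^{k+1}\rangle$, since every $k$-truncated space is automatically $(k+1)$-truncated. By the discussion preceding Theorem \ref{thm: BF classes iff towers}, this provides a natural $P_{S^{k+1}}$-localization $\alpha\colon P_{S^{k+2}} \to P_{S^{k+1}}$ satisfying $P_{S^{k+1}} \simeq P_{S^{k+1}} \circ P_{S^{k+2}}$ up to natural levelwise weak equivalence.

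Under Hypothesis \ref{hyp: combinatorial}, it will suffice to verify that the identity preserves cofibrations between cofibrant objects and weak equivalences between cofibrant objects. For cofibrations I would argue that both model structures are right Bousfield localizations at the common set $\es{K}_n'$ of their respective $W$-local $n$-polynomial model structures, and by \cite[Proposition 5.1.5]{Hi03} the cofibrations between cofibrant objects in such a localization agree with the cofibrations of the underlying model structure between $\es{K}_n'$-cellular objects. Since both the underlying cofibrations (the projective cofibrations) and the $\es{K}_n'$-cellular objects (the $n$-reduced projectively cofibrant functors) are independent of $W$, the identity will trivially preserve cofibrations between cofibrant objects.

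For weak equivalences, I would use Proposition \ref{prop: W-local n-homog as RBL} to characterise a weak equivalence in $\homog{n}(\es{J}_0, P_W\T)$ as a map $f$ with $(T_n P_W f)^{(n)}$ a levelwise weak equivalence. Through the zigzag of Quillen equivalences of Theorem \ref{thm: W-local diff as Quillen} and Theorem \ref{thm: E-local spectra and intermediate}, this should translate, for cofibrant $X, Y$, into the derived $n$-th derivative $\partial_n^W f$ being a weak equivalence in $\s(L_W\T)[O(n)]$. The natural transformation $\alpha$ then induces a comparison $\partial_n^{S^{k+2}} F \to \partial_n^{S^{k+1}} F$ which is a $P_{S^{k+1}}$-localization of spectra with an $O(n)$-action (cf.\ Lemma \ref{lem: inequality of unstable Bousfield classes}); since $P_{S^{k+1}}$ preserves weak equivalences, a weak equivalence $\partial_n^{S^{k+2}} f$ of $(k+1)$-truncated spectra becomes a weak equivalence $\partial_n^{S^{k+1}} f$ of $k$-truncated spectra after applying $P_{S^{k+1}}$, and transporting back through the Quillen equivalences will complete the argument.

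I expect the main obstacle to be the derived-level translation between $(T_n P_W f)^{(n)}$ and $\partial_n^W f$: the identification holds only after cofibrant-fibrant replacement in the $W$-local $n$-polynomial model structure, and so some care is needed to ensure the argument yields the required strict preservation of weak equivalences between cofibrant objects rather than merely a statement about derived functors.
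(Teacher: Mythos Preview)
Your approach differs substantially from the paper's and contains an unjustified step. The paper proceeds in two short stages: first, it shows that the identity adjunction between the $S^{k+2}$- and $S^{k+1}$-local $n$-polynomial model structures is Quillen (since $S^{k+1}$-locally $n$-polynomial objects are $S^{k+2}$-locally $n$-polynomial, one applies \cite[Proposition 3.3.18(1) and Theorem 3.1.6(1)]{Hi03}); second, since both $W$-periodic $n$-homogeneous model structures arise as right Bousfield localizations of these polynomial model structures at the \emph{same} set $\es{K}_n'$, \cite[Theorem 3.3.20(2)(a)]{Hi03} immediately transfers the Quillen adjunction to the homogeneous level. No analysis of cofibrant objects or detour through spectra is required.

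Your cofibration argument has a gap. You assert that the $\es{K}_n'$-cellular objects are the $n$-reduced projectively cofibrant functors, independent of $W$, but this is not established in the paper and is not obvious: the class of $\es{K}_n'$-colocal objects in a right Bousfield localization depends on the underlying model structure, here $\poly{n}(\es{J}_0, L_W\T)$, whose weak equivalences and hence homotopy function complexes genuinely vary with $W$. Even if the claim is ultimately true, proving it would require a separate argument at least as involved as the one you are trying to avoid. Your weak-equivalence argument compounds the difficulty by passing through the zigzag of Quillen equivalences to spectra; as you yourself note, the identification of $(T_nP_Wf)^{(n)}$ with $\partial_n^W f$ is only valid after cofibrant--fibrant replacement, so turning this into a strict statement about weak equivalences between cofibrant objects is exactly the replacement-sensitive step that is hard to make rigorous. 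The paper's route via \cite[Theorem 3.3.20(2)(a)]{Hi03} sidesteps both issues entirely.
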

\begin{proof}
It suffices to show that the adjoint pair
\[
\adjunction{\mathds{1}}{\homog{n}(\es{J}_0, P_{S^{k+2}}\T)}{\homog{n}(\es{J}_0, P_{S^{k+1}}\T)}{\mathds{1}},
\]
is a Quillen adjunction. The adjoint pair
\[
\adjunction{\mathds{1}}{\poly{n}(\es{J}_0, L_{S^{k+2}}\T)}{\poly{n}(\es{J}_0, L_{S^{k+1}}\T)}{\mathds{1}}
\]
is a Quillen adjunction since the composite of Quillen adjunctions is a Quillen adjunction so the adjunction 
\[
\adjunction{\mathds{1}}{{\Fun}(\es{J}_0, L_{S^{k+2}}\T)}{\poly{n}(\es{J}_0, L_{S^{k+1}}\T)}{\mathds{1}}
\]
is a Quillen adjunction, and by \cite[Proposition 3.3.18(1) $\&$ Theorem 3.1.6(1)]{Hi03}, this composite Quillen adjunction extends to the $S^{k+2}$-local $n$-polynomial model structure since $S^{k+1}$-local $n$-polynomial functors are $S^{k+2}$-locally $n$-polynomial. 

An application of \cite[Theorem 3.3.20(2)(a)]{Hi03} yields the desired result about the $n$-homogeneous model structures.  
\end{proof}

Similar proofs to Proposition \ref{prop: induced adjunction on holim QE} and Corollary \ref{cor: intermediate postnikov} yield the following results relating the $n$-homogeneous model structure to the homotopy limit of the tower of $S^{k+1}$-local $n$-homogeneous model structures. 

\begin{prop}
Under Hypothesis \ref{hyp: combinatorial} the adjunction
\[
\adjunction{(\res_0^n/O(n))^\bb{N}}{\holim~{\sf{P}}_\bullet^{\homog{n}}}{\holim~{\sf{P}}_\bullet^{\es{J}_n}}{(\ind_0^n\varepsilon^*)^\bb{N}},
\]
is a Quillen equivalence. 	
\end{prop}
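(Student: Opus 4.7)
The plan is to mirror the proof of Proposition \ref{prop: induced adjunction on holim QE}, replacing the Quillen equivalence between the $n$-th intermediate category and spectra with an $O(n)$-action by the Quillen equivalence between the $n$-th intermediate category and the $n$-homogeneous model structure. At each level $k \geq 0$, Theorem \ref{thm: W-local diff as Quillen} supplies a Quillen equivalence
\[
\adjunction{\res_0^n/O(n)}{L_{S^{k+1}}{\Fun}_{O(n)}(\es{J}_n, O(n)\T)}{\homog{n}(\es{J}_0, P_{S^{k+1}}\T)}{\ind_0^n\varepsilon^*},
\]
and these assemble level-wise into an adjunction between the injective model structures on sections of the two left Quillen presheaves.

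To show that the induced adjunction on homotopy limits is a Quillen adjunction, I exploit the fact that both homotopy limit model structures are right Bousfield localizations of their respective injective model structures on sections. Fibrations in a right Bousfield localization coincide with those of the underlying model category, so the right adjoint $(\ind_0^n\varepsilon^\ast)^{\bb{N}}$ preserves fibrations because the corresponding level-wise right adjoint does. By \cite[Proposition 3.3.16(2)]{Hi03}, cofibrations between cofibrant objects of a right Bousfield localization are cofibrations of the underlying model category, so the left adjoint $(\res_0^n/O(n))^{\bb{N}}$ preserves cofibrations between cofibrant objects, which is sufficient.

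For the Quillen equivalence, the right adjoint reflects weak equivalences between cofibrant objects: by the colocal Whitehead Theorem \cite[Theorem 3.2.13(2)]{Hi03} it suffices to check the analogous reflection statement in the injective model structures on sections, which is immediate from the level-wise Quillen equivalence of Theorem \ref{thm: W-local diff as Quillen}. To see that the derived counit is a weak equivalence, take a bifibrant section $Y_\bullet$ in $\holim~{\sf{P}}_\bullet^{\homog{n}}$; the derived counit is a map between cofibrant objects, so by \cite[Proposition 3.3.16(2)]{Hi03} it is a weak equivalence in the homotopy limit model structure if and only if it is a weak equivalence in the injective model structure on sections, which reduces to the level-wise derived counit being a weak equivalence, and this again follows from Theorem \ref{thm: W-local diff as Quillen}. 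Alternatively, one may deduce the Quillen equivalence by the $2$-out-of-$3$ property applied to the square formed by Corollary \ref{cor: intermediate postnikov}, the level-wise Quillen equivalence of Theorem \ref{thm: W-local diff as Quillen}, and the constant-limit Quillen adjunction into $\holim~{\sf{P}}_\bullet^{\homog{n}}$; however the latter requires knowing that constant-limit itself is a Quillen equivalence, which is precisely the content of the corollary one wishes to deduce, so the direct argument above is preferable.

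The main obstacle is verifying that the cofibrant objects of $\holim~{\sf{P}}_\bullet^{\homog{n}}$ are well-behaved enough to allow the reduction to the injective model structure. In particular, care is needed because cofibrant objects of the right Bousfield localization impose a compatibility condition between consecutive levels (each structure map $X_{k+1} \to X_k$ must be a weak equivalence in the localized model structure), and one must check that the level-wise application of $\res_0^n/O(n)$ preserves this condition. This follows from the fact that $\res_0^n/O(n)$ is left Quillen at every stage and therefore preserves weak equivalences between cofibrant objects, which closes the argument.
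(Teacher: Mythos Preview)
Your proposal is correct and follows exactly the approach the paper indicates: the paper's own proof consists only of the sentence ``Similar proofs to Proposition \ref{prop: induced adjunction on holim QE} and Corollary \ref{cor: intermediate postnikov} yield the following results,'' and you have faithfully carried out that template, replacing the level-wise Quillen equivalence $((\alpha_n)_!, (\alpha_n)^*)$ by the level-wise Quillen equivalence $(\res_0^n/O(n), \ind_0^n\varepsilon^*)$ supplied by Theorem \ref{thm: W-local diff as Quillen}. Your caution about the circularity of the alternative $2$-out-of-$3$ argument is also well placed.
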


\begin{cor}\label{cor: homog holim}
Under Hypothesis \ref{hyp: combinatorial} the adjunction	
\[
\adjunction{\mathrm{const}}{\homog{n}(\es{J}_0, \T)}{\holim~{\sf{P}}_\bullet^{\homog{n}}}{\lim},
\]	
is a Quillen equivalence.	
\end{cor}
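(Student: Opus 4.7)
The plan is to mimic the argument used for Corollary \ref{cor: intermediate postnikov}: construct a commutative square of Quillen adjunctions between the four relevant model categories in which three of the four edges are already known to be Quillen equivalences, and then invoke the $2$-out-of-$3$ property for Quillen equivalences to conclude that the remaining edge (the one we care about) is also a Quillen equivalence.

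Concretely, I would assemble the square
\[
\begin{tikzcd}
{\homog{n}(\es{J}_0, \T)} & {{\Fun}_{O(n)}(\es{J}_n, O(n)\T)} \\
{\holim~{\sf{P}}_\bullet^{\homog{n}}} & {\holim~{\sf{P}}_\bullet^{\es{J}_n}}
\arrow["{\res_0^n/O(n)}",  shift left=2, from=1-1, to=1-2]
\arrow["{\ind_0^n\varepsilon^*}",  shift left=2, from=1-2, to=1-1]
\arrow["{\rm{const}}"',  shift right=2, from=1-1, to=2-1]
\arrow["\lim"',  shift right=2, from=2-1, to=1-1]
\arrow["{\rm{const}}"',  shift right=2, from=1-2, to=2-2]
\arrow["\lim"',  shift right=2, from=2-2, to=1-2]
\arrow["{(\res_0^n/O(n))^\bb{N}}",  shift left=2, from=2-1, to=2-2]
\arrow["{(\ind_0^n\varepsilon^*)^\bb{N}}",  shift left=2, from=2-2, to=2-1]
\end{tikzcd}
\]
of Quillen adjunctions. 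The top horizontal is a Quillen equivalence by \cite[Theorem 10.1]{BO13}; the bottom horizontal is the Quillen equivalence established in the preceding Proposition; the right vertical is a Quillen equivalence by Corollary \ref{cor: intermediate postnikov}. Commutativity of the square is immediate from the pointwise definitions of $\mathrm{const}$ and $\lim$ together with the fact that $\ind_0^n\varepsilon^*$ and $\res_0^n/O(n)$ are applied levelwise in the section categories. An application of the $2$-out-of-$3$ property for Quillen equivalences then forces the left vertical adjunction to be a Quillen equivalence.

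Before invoking $2$-out-of-$3$, there is a small preliminary point to discharge: one must check that the adjunction $\mathrm{const} \dashv \lim$ between $\homog{n}(\es{J}_0, \T)$ and $\holim~{\sf{P}}_\bullet^{\homog{n}}$ is in fact a Quillen adjunction. This follows by the standard argument of \cite[Lemma 2.4]{GR16} once we observe that a cofibrant section of ${\sf P}_\bullet^{\homog{n}}$ has all its structure maps weak equivalences in successive $S^{k+1}$-local $n$-homogeneous model structures, so that $\mathrm{const}$ sends cofibrations to injective cofibrations whose components are cofibrations in each $\homog{n}(\es{J}_0, P_{S^{k+1}}\T)$, hence in particular cofibrations in the projective model structure underlying the homotopy-limit right Bousfield localization.

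The main obstacle is really bookkeeping rather than conceptual content: one needs to verify that the square of adjunctions commutes in the strong sense needed (i.e.\ that the left and right Quillen functors individually commute, not merely up to natural isomorphism of derived functors), and that our use of the $S^{k+1}$-periodic models $\homog{n}(\es{J}_0, P_{S^{k+1}}\T)$ is compatible with the intermediate category presentation on the other side. For the first, this is essentially because $\res_0^n/O(n)$ and $\ind_0^n\varepsilon^*$ are preserved under the levelwise $\bb{N}^{\op}$-diagram structure. For the second, Corollary \ref{cor: QE for W-local n-homog} guarantees that $\homog{n}(\es{J}_0, P_{S^{k+1}}\T)$ and $\homog{n}(\es{J}_0, L_{S^{k+1}}\T)$ have the same underlying homotopy theory, so no inconsistency arises in comparing them to the intermediate category side via Theorem \ref{thm: W-local diff as Quillen}. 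Once these compatibilities are in place the corollary is a direct consequence of the square.
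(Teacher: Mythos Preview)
Your approach is correct and is precisely the one the paper intends: the text preceding the corollary indicates that the proof is obtained by the same $2$-out-of-$3$ square argument as Corollary \ref{cor: intermediate postnikov}, now with the $(\res_0^n/O(n), \ind_0^n\varepsilon^*)$ adjunction in place of $((\alpha_n)_!, (\alpha_n)^*)$. Note only that in your displayed square the labels on the horizontal arrows are reversed---$\res_0^n/O(n)$ is the left adjoint from the intermediate category to the homogeneous model structure, not the other way around---but this is cosmetic and does not affect the argument.
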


%%%%%%%%%%%%%%%%%%%%   End of main body of article

\bibliography{references}

\begin{thebibliography}{{Tag}20a}

\bibitem[ALV07]{ALV2}
G.~Arone, P.~Lambrechts, and I.~Voli\'{c}.
\newblock Calculus of functors, operad formality, and rational homology of
  embedding spaces.
\newblock {\em Acta Math.}, 199(2):153--198, 2007.

\bibitem[AM99]{AroneMahowald}
G.~Arone and M.~Mahowald.
\newblock The {G}oodwillie tower of the identity functor and the unstable
  periodic homotopy of spheres.
\newblock {\em Invent. Math.}, 135(3):743--788, 1999.

\bibitem[Aro02]{Ar02}
G.~Arone.
\newblock The {W}eiss derivatives of {$B{ O}(-)$} and {$B{ U}(-)$}.
\newblock {\em Topology}, 41(3):451--481, 2002.

\bibitem[Aro09]{Ar09}
G.~Arone.
\newblock Derivatives of embedding functors. {I}. {T}he stable case.
\newblock {\em J. Topol.}, 2(3):461--516, 2009.

\bibitem[Bal21]{BalchinModelCats}
S.~Balchin.
\newblock {\em A handbook of model categories}, volume~27 of {\em Algebra and
  Applications}.
\newblock Springer, Cham, [2021] \copyright 2021.

\bibitem[Bar10]{BarwickLeftRight}
C.~Barwick.
\newblock On left and right model categories and left and right {B}ousfield
  localizations.
\newblock {\em Homology Homotopy Appl.}, 12(2):245--320, 2010.

\bibitem[Bar17]{Ba17}
D.~Barnes.
\newblock Rational orthogonal calculus.
\newblock {\em J. Homotopy Relat. Struct.}, 12(4):1009--1032, 2017.

\bibitem[Bar20]{BarthelChromaticConjectures}
T.~Barthel.
\newblock A short introduction to the telescope and chromatic splitting
  conjectures.
\newblock In {\em Bousfield classes and {O}hkawa's theorem}, volume 309 of {\em
  Springer Proc. Math. Stat.}, pages 261--273. Springer, Singapore, 2020.

\bibitem[BBPX22]{BeaudryBobkovaPhamXu}
A.~Beaudry, I.~Bobkova, V-C. Pham, and Z.~Xu.
\newblock The topological modular forms of {$\mathbb{R}P^2$} and {$\mathbb
  {R}P^2 \wedge \mathbb {C}P^2$}.
\newblock {\em Journal of Topology}, 15(4):1864--1926, 2022.

\bibitem[BE16]{BE16}
D.~Barnes and R.~Eldred.
\newblock Comparing the orthogonal and homotopy functor calculi.
\newblock {\em J. Pure Appl. Algebra}, 220(11):3650--3675, 2016.

\bibitem[Beh12]{BehrensEHP}
M.~Behrens.
\newblock The {G}oodwillie tower and the {EHP} sequence.
\newblock {\em Mem. Amer. Math. Soc.}, 218(1026):xii+90, 2012.

\bibitem[Ber12]{BergnerHomotopyLimits}
J.~E. Bergner.
\newblock Homotopy limits of model categories and more general homotopy
  theories.
\newblock {\em Bull. Lond. Math. Soc.}, 44(2):311--322, 2012.

\bibitem[BO13]{BO13}
D.~Barnes and P.~Oman.
\newblock Model categories for orthogonal calculus.
\newblock {\em Algebr. Geom. Topol.}, 13(2):959--999, 2013.

\bibitem[Bou75]{BousfieldLocalSpaces}
A.~K. Bousfield.
\newblock The localization of spaces with respect to homology.
\newblock {\em Topology}, 14:133--150, 1975.

\bibitem[Bou79]{BousfieldLocalSpectra}
A.~K. Bousfield.
\newblock The localization of spectra with respect to homology.
\newblock {\em Topology}, 18(4):257--281, 1979.

\bibitem[Bou94]{BousfieldPeriodicity}
A.~K. Bousfield.
\newblock Localization and periodicity in unstable homotopy theory.
\newblock {\em J. Amer. Math. Soc.}, 7(4):831--873, 1994.

\bibitem[Bou96]{BousfieldUnstable}
A.~K. Bousfield.
\newblock Unstable localization and periodicity.
\newblock In {\em Algebraic topology: new trends in localization and
  periodicity ({S}ant {F}eliu de {G}u\'{\i}xols, 1994)}, volume 136 of {\em
  Progr. Math.}, pages 33--50. Birkh\"{a}user, Basel, 1996.

\bibitem[Bou97]{BousfieldHomotopical}
A.~K. Bousfield.
\newblock Homotopical localizations of spaces.
\newblock {\em Amer. J. Math.}, 119(6):1321--1354, 1997.

\bibitem[Bou01]{Bo01}
A.~K. Bousfield.
\newblock On the telescopic homotopy theory of spaces.
\newblock {\em Trans. Amer. Math. Soc.}, 353(6):2391--2426, 2001.

\bibitem[BR11]{BarnesRoitzheimFramings}
D.~Barnes and C.~Roitzheim.
\newblock Local framings.
\newblock {\em New York J. Math.}, 17:513--552, 2011.

\bibitem[BR14]{BarnesRoitzheimStable}
D.~Barnes and C.~Roitzheim.
\newblock Stable left and right {B}ousfield localisations.
\newblock {\em Glasg. Math. J.}, 56(1):13--42, 2014.

\bibitem[Cas94]{CasacubertaUnstable}
C.~Casacuberta.
\newblock Recent advances in unstable localization.
\newblock In {\em The {H}ilton {S}ymposium 1993 ({M}ontreal, {PQ})}, volume~6
  of {\em CRM Proc. Lecture Notes}, pages 1--22. Amer. Math. Soc., Providence,
  RI, 1994.

\bibitem[DHS88]{NilpotenceI}
E.~S. Devinatz, M.~J. Hopkins, and J.~H. Smith.
\newblock Nilpotence and stable homotopy theory. {I}.
\newblock {\em Ann. of Math. (2)}, 128(2):207--241, 1988.

\bibitem[DS95]{DS95}
W.~G. Dwyer and J.~Spali\'nski.
\newblock Homotopy theories and model categories.
\newblock In {\em Handbook of algebraic topology}, pages 73--126.
  North-Holland, Amsterdam, 1995.

\bibitem[Dug01]{DuggerReplacement}
D.~Dugger.
\newblock Replacing model categories with simplicial ones.
\newblock {\em Trans. Amer. Math. Soc.}, 353(12):5003--5027, 2001.

\bibitem[Dug03]{DuggerDeltaGeneratedSpaces}
D.~Dugger.
\newblock Notes on delta-generated spaces, 2003.
\newblock available at \url{https://pages.uoregon.edu/ddugger/delta.html}.

\bibitem[Far96]{FarjounCellular}
E.~D. Farjoun.
\newblock {\em Cellular spaces, null spaces and homotopy localization}, volume
  1622 of {\em Lecture Notes in Mathematics}.
\newblock Springer-Verlag, Berlin, 1996.

\bibitem[FR08]{FRDeltaGeneratedSpaces}
L.~Fajstrup and J.~Rosick\'{y}.
\newblock A convenient category for directed homotopy.
\newblock {\em Theory Appl. Categ.}, 21:No. 1, 7--20, 2008.

\bibitem[GR16]{GR16}
J.~J. Guti\'errez and C.~Roitzheim.
\newblock Towers and fibered products of model structures.
\newblock {\em Mediterr. J. Math.}, 13(6):3863--3886, 2016.

\bibitem[GR17]{GR17}
J.~J. Guti\'errez and C.~Roitzheim.
\newblock Bousfield localisations along {Q}uillen bifunctors.
\newblock {\em Appl. Categ. Structures}, 25(6):1113--1136, 2017.

\bibitem[GS14]{GS14}
J.~P.~C. Greenlees and B.~Shipley.
\newblock An algebraic model for free rational {$G$}-spectra.
\newblock {\em Bull. Lond. Math. Soc.}, 46(1):133--142, 2014.

\bibitem[Heu21]{HeutsAlgModels}
G.~Heuts.
\newblock Lie algebras and {$v_n$}-periodic spaces.
\newblock {\em Ann. of Math. (2)}, 193(1):223--301, 2021.

\bibitem[Hir03]{Hi03}
P.~S. Hirschhorn.
\newblock {\em Model categories and their localizations}, volume~99 of {\em
  Mathematical Surveys and Monographs}.
\newblock American Mathematical Society, Providence, RI, 2003.

\bibitem[Hov99]{Ho99}
M.~Hovey.
\newblock {\em Model categories}, volume~63 of {\em Mathematical Surveys and
  Monographs}.
\newblock American Mathematical Society, Providence, RI, 1999.

\bibitem[Hov01]{HoveySpectra}
M.~Hovey.
\newblock Spectra and symmetric spectra in general model categories.
\newblock {\em J. Pure Appl. Algebra}, 165(1):63--127, 2001.

\bibitem[HS98]{NilpotenceII}
M.~J. Hopkins and J.~H. Smith.
\newblock Nilpotence and stable homotopy theory. {II}.
\newblock {\em Ann. of Math. (2)}, 148(1):1--49, 1998.

\bibitem[KRW21]{KRW2021}
M.~Krannich and O.~Randal-Williams.
\newblock Diffeomorphisms of discs and the second weiss derivative of btop(-),
  2021.

\bibitem[Kuh04]{KuhnTate}
N.~J. Kuhn.
\newblock Tate cohomology and periodic localization of polynomial functors.
\newblock {\em Invent. Math.}, 157(2):345--370, 2004.

\bibitem[Kuh06a]{KuhnTAQ}
N.~J. Kuhn.
\newblock Localization of {A}ndr\'{e}-{Q}uillen-{G}oodwillie towers, and the
  periodic homology of infinite loopspaces.
\newblock {\em Adv. Math.}, 201(2):318--378, 2006.

\bibitem[Kuh06b]{AroneKuhn}
N.~J. Kuhn.
\newblock Mapping spaces and homology isomorphisms.
\newblock {\em Proc. Amer. Math. Soc.}, 134(4):1237--1248, 2006.
\newblock With an appendix by G. Arone and the author.

\bibitem[Kuh07]{Kuh07}
N.~J. Kuhn.
\newblock Goodwillie towers and chromatic homotopy: an overview.
\newblock In {\em Proceedings of the {N}ishida {F}est ({K}inosaki 2003)},
  volume~10 of {\em Geom. Topol. Monogr.}, pages 245--279. Geom. Topol. Publ.,
  Coventry, 2007.

\bibitem[Mac07]{Ma07}
T.~Macko.
\newblock The block structure spaces of real projective spaces and orthogonal
  calculus of functors.
\newblock {\em Trans. Amer. Math. Soc.}, 359(1):349--383, 2007.

\bibitem[Mah81]{MahowaldboResolutions}
M.~Mahowald.
\newblock {$b{\rm o}$}-resolutions.
\newblock {\em Pacific J. Math.}, 92(2):365--383, 1981.

\bibitem[Mil81]{MillerASS}
H.~R. Miller.
\newblock On relations between {A}dams spectral sequences, with an application
  to the stable homotopy of a {M}oore space.
\newblock {\em J. Pure Appl. Algebra}, 20(3):287--312, 1981.

\bibitem[MM02]{MM02}
M.~A. Mandell and J.~P. May.
\newblock Equivariant orthogonal spectra and {$S$}-modules.
\newblock {\em Mem. Amer. Math. Soc.}, 159(755):x+108, 2002.

\bibitem[MMSS01]{MMSS01}
M.~A. Mandell, J.~P. May, S.~Schwede, and B.~Shipley.
\newblock Model categories of diagram spectra.
\newblock {\em Proc. London Math. Soc. (3)}, 82(2):441--512, 2001.

\bibitem[Qui69]{QuillenRational}
D.~G. Quillen.
\newblock Rational homotopy theory.
\newblock {\em Ann. of Math. (2)}, 90:205--295, 1969.

\bibitem[Rav84]{RavenelLocalizations}
D.~C. Ravenel.
\newblock Localization with respect to certain periodic homology theories.
\newblock {\em Amer. J. Math.}, 106(2):351--414, 1984.

\bibitem[SS03]{SS03}
S.~Schwede and B.~Shipley.
\newblock Stable model categories are categories of modules.
\newblock {\em Topology}, 42(1):103--153, 2003.

\bibitem[Sul77]{SullivanRational}
D.~Sullivan.
\newblock Infinitesimal computations in topology.
\newblock {\em Inst. Hautes \'{E}tudes Sci. Publ. Math.}, (47):269--331 (1978),
  1977.

\bibitem[{Tag}20a]{Taggartthesis}
N.~{Taggart}.
\newblock {\em Beyond orthogonal calculus: the unitary and Real cases}.
\newblock PhD thesis, 2020.

\bibitem[Tag20b]{TaggartRealityUnitary}
N.~Taggart.
\newblock Recovering unitary calculus from calculus with reality, 2020.

\bibitem[Tag21]{TaggartOCandUC}
N.~Taggart.
\newblock Comparing the orthogonal and unitary functor calculi.
\newblock {\em Homology Homotopy Appl.}, 23(2):227--256, 2021.

\bibitem[Tag22a]{TaggartUnitary}
N.~Taggart.
\newblock Unitary calculus: model categories and convergence.
\newblock {\em J. Homotopy Relat. Struct.}, 17(3):419--462, 2022.

\bibitem[Tag22b]{TaggartReality}
N.~Taggart.
\newblock Unitary functor calculus with reality.
\newblock {\em Glasg. Math. J.}, 64(1):197--230, 2022.

\bibitem[Vog71]{VogtConvenientHomotopy}
R.~M. Vogt.
\newblock Convenient categories of topological spaces for homotopy theory.
\newblock {\em Arch. Math. (Basel)}, 22:545--555, 1971.

\bibitem[Wei95]{We95}
M.~Weiss.
\newblock Orthogonal calculus.
\newblock {\em Trans. Amer. Math. Soc.}, 347(10):3743--3796, 1995.

\end{thebibliography}
\bibliographystyle{alpha}
\end{document}